\newtheorem{prop}{Proposition}[section]
\newtheorem{thm*}{Theorem}
\newtheorem{cor*}[thm*]{Corollary}
\newtheorem{defn}[prop]{Definition}
\newtheorem{rem}[prop]{Remark}
\newtheorem{lem}[prop]{Lemma}
\newcommand{\mref}[1]{(\ref{#1})}
\title{On the Kähler-Einstein metric at strictly pseudoconvex points}
\author{Sébastien Gontard}
\email{Sebastien.Gontard@univ-grenoble-alpes.fr}
\address{Univ. Grenoble Alpes, CNRS, IF, 38000 Grenoble, France}
\keywords{Strictly Pseudoconvex Domains, Kähler-Einstein Metrics, Holomorphic Sectional Curvatures}
\subjclass[2010]{32Q20, 32T15, 53C25}
\begin{document}
\maketitle
\begin{abstract}
We prove a local boundary regularity result for the complete Kähler-Einstein metrics of negative Ricci curvature near strictly pseudoconvex boundary point. We also study the asymptotic behaviour of their holomorphic bisectional curvatures near such points.
\end{abstract}

\section*{Introduction}
In 1980, S.-Y. Cheng and S.-T. Yau proved that every bounded strictly pseudoconvex  domain $\Omega \subset \mathbb{C}^n$, $n\geq 2$, with boundary of class $\mathcal{C}^7$, admits a complete Kähler-Einstein metric of negative Ricci curvature (for convenience, we will only work with Ricci curvature $=-(n+1)$). Namely, they proved that there exists a (unique) solution $g\in \mathcal{C}^{\omega}\left(\Omega\right)$ to the Monge-Ampère equation
\begin{equation}
\label{MAOri}
Det\left(g_{i\bar{j}}\right)=e^{(n+1)g} \quad \text{on $\Omega$},
\end{equation}
satisfying the following boundary condition: 
\begin{equation}
\label{BVal}
g=+\infty \quad \text{on $\partial \Omega$}.
\end{equation}
\\By comparing this solution to the approximate solutions constructed by C. Fefferman in \cite{Fef2}, they proved that if $\Omega$ is bounded, strictly pseudoconvex with boundary of class $\mathcal{C}^{\max\left(2n+9,3n+6\right)}$, then $e^{-g}\in \mathcal{C}^{n+1+\delta}\left(\overline{\Omega}\right)$ for every $\displaystyle \delta \in \left[0,\frac{1}{2}\right[$, and the holomorphic sectional curvatures of this metric tend to -2, which are the curvatures of the unit ball equipped with its Bergman-Einstein metric, at any boundary point. Note that if the boundary is of class $\mathcal{C}^\infty$, J. Lee and R. Melrose proved that $e^{-g}\in \mathcal{C}^{n+1+\delta}\left(\overline{\Omega}\right)$ for every $\displaystyle \delta \in \left[0,1\right[$ (see \cite{LMbobe}), and this regularity is optimal in general.
\\We prove a local version of the result of S.-Y. Cheng and S.-T. Yau. Namely, we prove the following theorem: 

\begin{thm*}
\label{CYLoc}
Let $\Omega\subset \mathbb{C}^n$, $n\geq 2$, and $q\in \partial \Omega$. Assume that there exists a neighborhood of $q$ on which $\partial \Omega$ is strictly pseudoconvex and of class $\mathcal{C}^k$ with $k\geq \max(2n+9,3n+6)$.
Moreover, assume that $\Omega$ carries a complete Kähler-Einstein metric induced by a function $g$ that satisfies conditions \mref{MAOri} and \mref{BVal}. Then there exists an open set $U\subset \mathbb{C}^n$ containing $q$ such that for every $\displaystyle \delta \in \left[0,\frac{1}{2}\right[$, we have: 
$$ e^{-g}\in \mathcal{C}^{n+1+\delta}\left(\overline{\Omega\cap U}\right).$$
\end{thm*}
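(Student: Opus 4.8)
The plan is to localize near $q$ both Fefferman's construction of an approximate solution and the a priori comparison estimates of Cheng--Yau, the latter being possible thanks to the standing assumption that $(\Omega,g)$ is complete.

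First, Fefferman's construction in \cite{Fef2} is purely local and uses only strict pseudoconvexity together with $\mathcal{C}^k$ regularity; hence on some neighbourhood $V$ of $q$ one obtains a $\mathcal{C}^k$ defining function $r$ for $\Omega$ for which $u:=-\log r$ solves \mref{MAOri} up to an error of order $O(r^{n+1})$ near $q$. After shrinking $V$, the functions $u+C$ and $u-C$ are, respectively, a super- and a sub-solution of \mref{MAOri} on $\Omega\cap V$ for $C$ large.

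The heart of the proof is the \emph{crude estimate}: there are a neighbourhood $V_1\Subset V$ of $q$ and a constant $C>0$ such that
$$-\log d-C\ \le\ g\ \le\ -\log d+C\qquad\text{on }\ \Omega\cap V_1,$$
where $d$ denotes the Euclidean distance to $\partial\Omega$; equivalently $e^{-g}$ is comparable to $d$ there. I would obtain it by adapting Cheng--Yau's barrier argument, comparing $g$ with the sub/super-solutions $u\pm C$ by means of Yau's generalized maximum principle on the complete manifold $(\Omega,g)$ --- this is exactly where completeness is used --- the only new feature being the artificial boundary $\Omega\cap\partial V_1$, which one handles by inserting a cutoff and absorbing the resulting error terms. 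For the upper bound one may argue more cheaply: a standard modification of $r$ away from $q$ produces a bounded strictly pseudoconvex domain $D$ of class $\mathcal{C}^k$ with $D\subseteq\Omega$ and $\partial D=\partial\Omega$ near $q$; the theorem of Cheng--Yau applied to $D$ gives $e^{-g_D}\asymp d$ near $\partial D$, and the monotonicity of the Kähler--Einstein potential under inclusion of domains yields $g\le g_D$ on $D$, hence $g\le-\log d+C$ near $q$. This crude estimate is the step I expect to be the main obstacle, the difficulty being to keep Cheng--Yau's argument genuinely local while the geometry of $\partial\Omega$ is uncontrolled away from $q$.

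Once $e^{-g}$ is known to be comparable to $r$ near $q$, the remaining part of Cheng--Yau's proof is local near the boundary and carries over. Writing $e^{-g}=r(1+h)$, the crude estimate shows that $h$ is bounded, and \mref{MAOri} translates into a (degenerate) elliptic equation for $h$; an iteration of scaled Schauder estimates up to $\partial\Omega$, carried out in a collar neighbourhood of $\partial\Omega\cap V_1$ and involving only the behaviour of the solution near the boundary, improves this to $h=O(r^{n+1-\varepsilon})$ for every $\varepsilon>0$ and, with the precise bookkeeping of Cheng--Yau, to $e^{-g}\in\mathcal{C}^{n+1+\delta}(\overline{\Omega\cap U})$ for every $\delta\in\left[0,\tfrac{1}{2}\right[$, where $U\Subset V_1$ is a small neighbourhood of $q$. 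The corner where $\partial V_1$ meets $\partial\Omega$ causes no difficulty: for $U\Subset V_1$ the estimates on $\Omega\cap U$ involve only the behaviour of $e^{-g}$ along the compact smooth piece $\partial\Omega\cap\overline{U}$. This completes the proof.
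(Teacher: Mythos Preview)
Your outline has the right ingredients but places the difficulty in the wrong step and misidentifies the mechanism of the bootstrap. The crude two-sided estimate $|g+\log d|\le C$ near $q$ is not the obstacle: it (in fact the stronger metric comparison $(g_{i\bar j})=(1+O((-\varphi)^{\delta_0}))(w_{i\bar j})$, with $w=-\log(-\varphi^{(n+1)})$ the Fefferman approximate potential) was already obtained by Bland in \cite{Bla1}, and the paper simply quotes it. Bland's own argument then gave $e^{-g}\in\mathcal C^{\,n/2+\delta}$ locally; the content of the present theorem is precisely the jump to $n+1+\delta$, and that is \emph{not} produced by ``iterated scaled Schauder estimates''. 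Schauder on the linearized Monge--Amp\`ere equation converts a given decay rate for $u:=g-w$ into the same decay rate for its higher covariant derivatives; it does not by itself improve the power of $-\varphi$.

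The actual engine is a sharpened local version of Cheng--Yau's gradient estimate (their Proposition~6.4). One shows that for suitable $0<\alpha<n$, $0\le\beta<n+1$ the test function $\dfrac{|\nabla u|_w^2}{(-\varphi)^\beta}-c(-\varphi)^\alpha$ satisfies $\Delta_{w'}(\,\cdot\,)>\lambda\,(\,\cdot\,)$ on $\Omega\cap U\cap\{|\varphi|<\epsilon\}$, and then applies Bland's \emph{local} generalized maximum principle (his Lemma~II) rather than a cutoff; completeness of $(g_{i\bar j})$ enters exactly here. Iterating the choice of $(\alpha,\beta)$ --- first $\beta=0$, then $\alpha=\beta\approx n$, then $\beta=\alpha+1\approx n+1$ --- pushes the conclusion up to $|\nabla u|_w^2\le c(-\varphi)^{\gamma}$ for every $\gamma<2n+1$. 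Integrating yields $|u|\le c(-\varphi)^{\gamma/2}$, and only \emph{then} is Schauder theory invoked, on the linearized equation $(n+1)u-F=h^{i\bar j}u_{j\bar i}$, to propagate this decay to the higher derivatives and conclude $e^{-g}=-\varphi^{(n+1)}e^{-u}\in\mathcal C^{\,n+1+\delta}$. The paper's advance over \cite{Bla1} is exactly this three-step iteration of $(\alpha,\beta)$; your sketch omits the gradient estimate altogether.
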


Note that J. Bland already obtained this result in the case of ``nice" strictly pseudoconvex boundary points, and also obtained that $e^{-g} \in \mathcal{C}^{\frac{n}{2}+\delta}\left(\overline{\Omega\cap U}\right)$ in the general case (see \cite{Bla1}).
Regarding the curvature behavior, we prove the following: 

\begin{thm*}
\label{Main}
Let $\Omega\subset \mathbb{C}^n$, $n\geq 2$, and $q\in \partial \Omega$. Assume that there exists a neighborhood of $q$ on which $\partial \Omega$ is strictly pseudoconvex and of class $\mathcal{C}^k$ with $k\geq \max\left(2n+9,3n+6\right)$.
Moreover, assume that $\Omega$ carries a complete Kähler-Einstein metric induced by a function $g$ that satisfies conditions \mref{MAOri} and \mref{BVal}. Then,
\begin{equation}
\label{AsyBis}
\sup_{v,w\in S(0,1)}\left(Bis_{g,z}(v,w)+\left(1+\frac{\left\lvert \langle v;w\rangle_{g,z}\right \rvert^2}{\langle v;v\rangle_{g,z}^2\langle w;w\rangle_{g,z}^2}\right)\right) \underset{z \to q}{\longrightarrow }0.
\end{equation}
\end{thm*}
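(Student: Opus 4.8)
The plan is to combine the local boundary regularity of Theorem~\ref{CYLoc} with a non-isotropic (``parabolic'') scaling argument based at $q$, reducing \mref{AsyBis} to the model case of the unit ball, whose holomorphic bisectional curvatures are exactly the quantity $-(1+\ldots)$ appearing in \mref{AsyBis} at every point. It is convenient to work with $u:=e^{-g}$: conditions \mref{MAOri}--\mref{BVal} are equivalent to saying that $u>0$ on $\Omega$, $u=0$ on $\partial\Omega$, $-\log u$ is strictly plurisubharmonic, and Fefferman's Monge--Amp\`ere functional $J(u):=(-1)^n\det\begin{pmatrix}u&u_{\bar\jmath}\\ u_i&u_{i\bar\jmath}\end{pmatrix}$ equals $1$ on $\Omega$, with $g_{i\bar\jmath}=(u_iu_{\bar\jmath}-u\,u_{i\bar\jmath})/u^2$. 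Theorem~\ref{CYLoc} gives $u\in\mathcal C^{n+1+\delta}(\overline{\Omega\cap U})$; moreover, $u$ being an exact solution of $J=1$, its $(n+1)$-jet along $\partial\Omega\cap U$ coincides with that of Fefferman's approximate solution $\rho_0$ (a $\mathcal C^{k'}$ defining function near $q$ with $J(\rho_0)=1+O(\rho_0^{n+1})$), so $u-\rho_0$ vanishes to order $n+1$ at $\partial\Omega\cap U$. The analytic input I will use beyond Theorem~\ref{CYLoc} itself is the \emph{parabolically weighted} version of this: the products of at most $n+1$ of the vector fields $\rho_0^{1/2}\partial_{z_j}$ ($j<n$), $\rho_0\,\partial_{z_n}$ and their conjugates (in coordinates adapted to $\partial\Omega$ at a boundary point near $q$) applied to $u-\rho_0$ are $O(\rho_0^{\,n+1+\delta})$, with a bound depending only on the local $\mathcal C^k$-geometry of $\partial\Omega$ near $q$. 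This is contained in the proof of Theorem~\ref{CYLoc}, which compares $g$ with $\rho_0$ through the linearized Monge--Amp\`ere operator on Whitney-type interior balls.

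Assume \mref{AsyBis} fails and write $\Psi_g(z;v,w)$ for the quantity inside the supremum in \mref{AsyBis}: there are $c>0$ and $z_\nu\to q$ in $\Omega$ with $\sup_{v,w\in S(0,1)}\Psi_g(z_\nu;v,w)\ge c$ for all $\nu$. Pick a nearest boundary point $p_\nu\in\partial\Omega$, set $\epsilon_\nu:=\operatorname{dist}(z_\nu,\partial\Omega)\to0$, and compose with an affine isometry $A_\nu$ so that $p_\nu=0$, the inner normal at $0$ points along the real $-\operatorname{Re}z_n$ direction, and $\partial\Omega=\{-2\operatorname{Re}z_n-|z'|^2+O(|z|^3)=0\}$ near $0$, with $z_\nu=(0,\dots,0,-\epsilon_\nu)+o(\epsilon_\nu)$. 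Put $\delta_\nu(z',z_n):=(z'/\sqrt{\epsilon_\nu},\,z_n/\epsilon_\nu)$, $F_\nu:=\delta_\nu\circ A_\nu$, $\Omega_\nu:=F_\nu(\Omega)$, $z_\nu^{*}:=F_\nu(z_\nu)$; then $z_\nu^{*}\to p_{*}:=(0,\dots,0,-1)\in\mathcal S$ and $\Omega_\nu\to\mathcal S:=\{-2\operatorname{Re}z_n-|z'|^2>0\}$ in $\mathcal C^{k'}_{\mathrm{loc}}$ of defining functions. (The non-isotropic dilation is essential: the isotropic one produces the half-space $\{\operatorname{Re}z_n<0\}$, which is not biholomorphic to the ball and carries no complete K\"ahler--Einstein metric of negative Ricci curvature.) The function $g_\nu:=g\circ F_\nu^{-1}+\tfrac1{n+1}\log|\det(F_\nu^{-1})'|^2$ satisfies \mref{MAOri}--\mref{BVal} on $\Omega_\nu$, hence induces the complete K\"ahler--Einstein metric of $\Omega_\nu$; and since $\Psi$ and the quantity $\sup_{v,w\in S(0,1)}\Psi(\cdot;v,w)$ are biholomorphic invariants, $\sup_{v,w\in S(0,1)}\Psi_{g_\nu}(z_\nu^{*};v,w)=\sup_{v,w\in S(0,1)}\Psi_g(z_\nu;v,w)\ge c$. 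It suffices to show the left-hand side tends to $0$.

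Set $u_\nu:=e^{-g_\nu}=c_\nu(u\circ F_\nu^{-1})$ and $\rho_{0,\nu}:=c_\nu(\rho_0\circ F_\nu^{-1})$, where $c_\nu=|\det(F_\nu^{-1})'|^{-2/(n+1)}=\epsilon_\nu^{-1}$. From the structure of $\rho_0$ one gets $\rho_{0,\nu}\to\rho_0^{\mathcal S}$ in $\mathcal C^{k'}_{\mathrm{loc}}(\overline{\mathcal S})$, where $\rho_0^{\mathcal S}=-2\operatorname{Re}z_n-|z'|^2$ is (since $\mathcal S$ is spherical) an exact solution of $J=1$ on $\mathcal S$. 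The parabolically weighted vector fields above are $F_\nu$-invariant, so the weighted comparison estimate for $u-\rho_0$ rescales to $u_\nu-\rho_{0,\nu}=O(\epsilon_\nu^{\,n+\delta}\rho_{0,\nu}^{\,n+1+\delta})$ in the weighted sense; hence $u_\nu-\rho_{0,\nu}\to0$, and so $u_\nu\to\rho_0^{\mathcal S}$, in $\mathcal C^{n+1}_{\mathrm{loc}}(\overline{\mathcal S})$. On a fixed compact neighborhood $K\ni p_{*}$ in $\mathcal S$ the $u_\nu$ are then pinched between positive constants with bounded $\mathcal C^2$-norm, so $g_\nu=-\log u_\nu$ solves there the equation \mref{MAOri}, which is uniformly elliptic and concave under these bounds; Evans--Krylov and Schauder estimates upgrade this to $g_\nu\to g_{\mathcal S}:=-\log\rho_0^{\mathcal S}$ in $\mathcal C^\infty(K')$ for a slightly smaller $K'\ni p_{*}$. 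Therefore the metric tensors and curvature tensors converge uniformly near $p_{*}$, whence $\Psi_{g_\nu}(z_\nu^{*};v,w)\to\Psi_{g_{\mathcal S}}(p_{*};v,w)$ uniformly in $(v,w)\in S(0,1)\times S(0,1)$. But $g_{\mathcal S}$ is the complete K\"ahler--Einstein metric of $\mathcal S\cong\mathbb B^n$, i.e.\ complex hyperbolic space of Ricci curvature $-(n+1)$, so $\Psi_{g_{\mathcal S}}\equiv0$ (this is exactly the assertion that this space has the bisectional curvatures appearing in \mref{AsyBis}, checked by a direct computation at the center of the ball and extended by homogeneity). Hence $\sup_{v,w\in S(0,1)}\Psi_{g_\nu}(z_\nu^{*};v,w)\to0$, the desired contradiction, and \mref{AsyBis} follows.

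The main obstacle is the weighted comparison estimate for $e^{-g}-\rho_0$ in the rescaling-stable form used above: one needs not merely high-order vanishing at $\partial\Omega$ (which $\mathcal C^{n+1+\delta}$-regularity plus jet matching already gives), but control of the parabolically natural derivatives (``$\rho_0^{1/2}$ times tangential / $\rho_0$ times transverse'', iterated up to order $n+1$) by $O(\rho_0^{n+1+\delta})$ with constants governed by the local $\mathcal C^k$-geometry --- a strictly stronger statement that reflects the structured (Monge--Amp\`ere) nature of the equation. This is implicit in the proof of Theorem~\ref{CYLoc} (which follows the Cheng--Yau/Fefferman scheme, naturally producing estimates in anisotropically weighted Hölder spaces), but extracting it in the localized, weighted form is the delicate point; the rest --- the scaling bookkeeping, the identification of $\mathcal S$ and of $g_{\mathcal S}$, the interior bootstrap, and the curvature computation on $\mathbb B^n$ --- is routine. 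Equivalently one may avoid the blow-up: fixing $z$ near $q$, write $g=-\log\rho_0-\log(u/\rho_0)$, compute $R_{i\bar\jmath k\bar l}(g)$ in a frame $g$-orthonormal at $z$ and adapted to the anisotropy of $g$, and check term by term --- using the weighted smallness of $\log(u/\rho_0)$ --- that it differs by $o(1)$ from the curvature of $-\log\rho_0$, which tends to the complex hyperbolic values as $z\to q$ because $\rho_0$ is locally the Siegel quadric up to a parabolic rescaling.
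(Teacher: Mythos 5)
Your strategy (parabolic rescaling at $q$ to the Siegel model, transport of the Kähler--Einstein potential by the dilations, interior bootstrap, and the model computation on $\mathbb{B}^n$) is a genuinely different route from the paper's, and the scaling bookkeeping, the identification of $-2\operatorname{Re}z_n-|z'|^2$ as the limit, and the invariance arguments are in order. However, the proof has a genuine gap exactly where you flag ``the delicate point'': the rescaling-stable anisotropic estimate --- products of up to $n+1$ of the weighted fields $\rho_0^{1/2}\partial_{z_j}$, $\rho_0\partial_{z_n}$ applied to $e^{-g}-\rho_0$ being $O(\rho_0^{n+1+\delta})$ uniformly near $q$ --- is asserted, not proved. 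It is not a formal consequence of Theorem \ref{CYLoc}: isotropic $\mathcal{C}^{n+1+\delta}(\overline{\Omega\cap U})$ regularity plus ``jet matching'' does not self-improve to the anisotropic derivative gains that survive the non-isotropic dilations (and the jet matching itself already requires the Cheng--Yau-type comparison, not just the fact that both functions satisfy $J=1$ exactly or approximately). Without this input your blow-up argument does not get uniform $\mathcal{C}^2$ control of $u_\nu$ on a fixed compact neighborhood of $p_*$, and the whole chain (uniform ellipticity, Evans--Krylov/Schauder, curvature convergence) collapses. The needed control does exist in the paper's intermediate results --- Proposition \ref{Dpu} bounds the $w$-covariant derivatives $\lvert D^p u\rvert_w$ by $c\lvert\varphi\rvert^{\gamma/2}$ for all $p\le k-2l$ and $\gamma<2n+1$, and by Proposition \ref{EstInv} the reference metric $w$ encodes precisely the $\lvert\varphi\rvert^{1/2}$-tangential / $\lvert\varphi\rvert$-normal weighting --- so the gap is fillable, but translating those covariant estimates into your coordinate-adapted, base-point-uniform weighted statement and checking its behaviour under $F_\nu$ is real work that your proposal skips.

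It is also worth noting how much lighter the paper's own argument is. It never rescales: writing $w'=-\log(-\psi)$ with $\psi=\varphi^{(n+1)}e^{-u}$, it uses the exact curvature formula \mref{RPhi} for potentials of the form $-\log(-\psi)$, in which the target quantity $-\bigl(1+\lvert\langle v;w\rangle_g\rvert^2/(\lvert v\rvert_g^2\lvert w\rvert_g^2)\bigr)$ appears identically, and the two correction terms carry an explicit factor $\tfrac{1}{-\psi}$. The elementary inequalities $\lvert v\rvert_\psi^2\le(-\psi)\lvert v\rvert_g^2$ and $\lambda_-I\le(\psi_{i\bar j})$ absorb that factor, so all that is needed from the boundary regularity is $\psi\in\mathcal{C}^3$ up to the boundary together with the blow-up bound $\lvert\psi_{i\bar jk\bar l}\rvert\le c\lvert\varphi\rvert^{-1+\delta}$ (a consequence of Proposition \ref{Reg} with $\tfrac{\gamma}{2}=n+\delta$, using $n\ge2$) --- far less than the order-$(n+1)$ weighted vanishing your scheme requires, and with no model identification or interior bootstrap. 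If you want to salvage your approach, the honest version is to prove the weighted estimate from Propositions \ref{Dpu} and \ref{EstInv}; otherwise the direct computation via \mref{RPhi} is both shorter and less demanding on the analysis.
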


$\newline$
Here and from now on, $Bis_{g,z}(v,w)$ (respectively $\langle v;w\rangle_{g,z}$) stands for the holomorphic bisectional curvature (respectively the Hermitian scalar product) of the Kähler metric induced by the potential $g$, at point $z$, between the directions $v$ and $w$ (a more precise definition is given in Section 1).

Especially, using the results on the existence of Kähler-Einstein metrics of N. Mok and S.-T. Yau in the case of bounded pseudoconvex domains (see \cite{MY}), and of A. Isaev in the case of pseudoconvex tube domains with unbounded base (see \cite{Isa1}), we directly deduce:

\begin{cor*}

Let $n\geq2$. Let $\Omega\subset \mathbb{C}^n$ be either a bounded pseudoconvex domain with boundary of class $\mathcal{C}^2$, or a tube domain whose base is convex and does not contain any straight line. Let $g\in \mathcal{C}^\omega\left(\Omega\right)$ be the Kähler-Einstein potential on $\Omega$ that satisfies conditions \mref{MAOri} and \mref{BVal}. Let $q\in \partial \Omega$. Assume that there exists a neighborhood of $q$ on which $\partial \Omega$ is strictly pseudoconvex and of class $\mathcal{C}^k$ with $k\geq \max(2n+9,3n+6)$.
Then there exists an open set $U\subset \mathbb{C}^n$ containing $q$ such that we have: 
$$\forall \delta \in \left[0,\frac{1}{2}\right[,\quad e^{-g}\in \mathcal{C}^{n+1+\delta}\left(\overline{\Omega\cap U}\right).$$
Moreover, we have the following curvature behaviour: 
$$\sup_{v,w\in S(0,1)}\left(Bis_{g,z}(v,w)+\left(1+\frac{\left\lvert \langle v;w\rangle_{g,z}\right \rvert^2}{\langle v;v\rangle_{g,z}^2\langle w;w\rangle_{g,z}^2}\right)\right) \underset{z \to q}{\longrightarrow }0.$$
\end{cor*}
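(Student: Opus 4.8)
The plan is to deduce this corollary directly from the local Cheng--Yau regularity theorem and the curvature theorem stated above. The point of invoking the \emph{local} versions is that $\Omega$ need not be strictly pseudoconvex everywhere --- only near $q$ --- whereas the existence results of \cite{MY} and \cite{Isa1} apply to $\Omega$ as a whole. The only preliminary work is therefore to check that in each of the two cases listed the domain $\Omega$ carries a complete Kähler--Einstein metric induced by a potential $g\in\mathcal{C}^\omega(\Omega)$ satisfying conditions \mref{MAOri} and \mref{BVal}, which is precisely the standing hypothesis of those two theorems.

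First I would settle the existence (and uniqueness) of $g$. If $\Omega$ is a bounded pseudoconvex domain with boundary of class $\mathcal{C}^2$, the theorem of N. Mok and S.-T. Yau in \cite{MY} provides a function $g\in\mathcal{C}^\omega(\Omega)$ solving the Monge--Ampère equation \mref{MAOri} together with the boundary blow-up \mref{BVal} and inducing a complete Kähler--Einstein metric of Ricci curvature $-(n+1)$. In the tube case, when the base of $\Omega$ is convex and contains no straight line, the result of A. Isaev in \cite{Isa1} gives the same conclusion. (These tube domains are pseudoconvex, as required by \cite{Isa1}, and they may genuinely possess $\mathcal{C}^k$-smooth strictly pseudoconvex boundary points --- arising from points where the base is smooth and strictly convex --- so the remaining hypotheses of the corollary are not vacuous; note also that the two theorems above impose no boundedness assumption on $\Omega$, so the unbounded tube case is covered.) Uniqueness of such a $g$ is classical, which justifies speaking of \emph{the} Kähler--Einstein potential.

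With $g$ at hand in either case, the two asserted conclusions are immediate. The hypothesis that $\partial\Omega$ is strictly pseudoconvex and of class $\mathcal{C}^k$ with $k\geq\max(2n+9,3n+6)$ in a neighborhood of $q$ is assumed, and the hypothesis that $\Omega$ carries a complete Kähler--Einstein metric induced by a $g$ satisfying \mref{MAOri} and \mref{BVal} has just been established; so the local Cheng--Yau theorem yields an open set $U\ni q$ with $e^{-g}\in\mathcal{C}^{n+1+\delta}(\overline{\Omega\cap U})$ for every $\delta\in\left[0,\frac{1}{2}\right[$, and the curvature theorem yields the stated convergence of the bisectional curvatures as $z\to q$. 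The only point needing care --- and the nearest thing to an obstacle --- is purely a matter of bookkeeping: one must make sure that \cite{MY} and \cite{Isa1} really produce the same object the two theorems consume, namely a genuine potential $g$ (not merely an abstract metric) satisfying \mref{MAOri} together with the infinite boundary condition \mref{BVal}. Both references are formulated exactly in these terms, so there is nothing further to do.
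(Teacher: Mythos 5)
Your proposal is correct and follows exactly the paper's route: the corollary is obtained by citing the existence results of Mok--Yau (bounded pseudoconvex case) and Isaev (tube case) to produce the complete Kähler--Einstein potential $g$ satisfying \mref{MAOri} and \mref{BVal}, and then applying Theorems \ref{CYLoc} and \ref{Main} at the strictly pseudoconvex point $q$. Nothing essential differs from the paper's (one-line) deduction.
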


The curvature behavior \mref{AsyBis} can also be obtained in pseudoconvex domains, at boundary points for which the squeezing tends to one (for precise reference about the squeezing function, see for instance \cite{Yeu}). Namely: 

\begin{thm*}
\label{BiSqueez}
Let $\Omega\subset \mathbb{C}^n$ be a pseudoconvex domain, $n\geq 2$, and $q\in \partial \Omega$. Assume that the squeezing function of $\Omega$ tends to one at $q$.
Moreover, assume that $\Omega$ carries a complete Kähler-Einstein metric induced by a function $g$ solving equation \mref{MAOri} with condition \mref{BVal} on $\Omega$. Then,
$$\sup_{v,w\in S(0,1)}\left(Bis_{g,z}(v,w)+\left(1+\frac{\left\lvert \langle v;w\rangle_{g,z}\right \rvert^2}{\langle v;v\rangle_{g,z}^2\langle w;w\rangle_{g,z}^2}\right)\right) \underset{z \to q}{\longrightarrow }0.$$
\end{thm*}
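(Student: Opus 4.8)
The plan is to use the squeezing hypothesis to realize $\Omega$, near $q$, as a biholomorphic image of a domain trapped between two concentric balls of almost equal radii, and then to transfer to the Kähler--Einstein metric a stability estimate for the Monge--Ampère equation \mref{MAOri}.

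First I would reduce via the squeezing function. Since $s_\Omega\to1$ at $q$, for every $z\in\Omega$ close enough to $q$ one can choose a holomorphic embedding $f_z:\Omega\hookrightarrow\mathbb{C}^n$ with $f_z(z)=0$ and $\overline{\mathbb{B}(0,r_z)}\subset\Omega_z:=f_z(\Omega)\subset\mathbb{B}(0,1)$, where $r_z\to1$ as $z\to q$. The domain $\Omega_z$ is bounded and pseudoconvex, hence carries a unique complete Kähler--Einstein metric, with potential $g_z=g\circ f_z^{-1}+\tfrac1{n+1}\log\bigl|\det(f_z^{-1})'\bigr|^2$: the extra term is pluriharmonic, so it does not change the complex Hessian, and one invokes the uniqueness of the complete Kähler--Einstein metric of a bounded pseudoconvex domain (Cheng--Yau, Mok--Yau). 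I would then recall from Section 1 that the supremand in the statement is invariant under biholomorphisms and is a continuous function of the $2$--jet of the Kähler--Einstein metric at the base point which vanishes identically for the complex hyperbolic metric of a ball; it therefore suffices to prove that
\[\sup_{v,w\in S(0,1)}\Bigl(Bis_{g_z,0}(v,w)+\bigl(1+\tfrac{|\langle v;w\rangle_{g_z,0}|^2}{\langle v;v\rangle_{g_z,0}^2\langle w;w\rangle_{g_z,0}^2}\bigr)\Bigr)\longrightarrow0\]
uniformly over all $\Omega_z$ trapped as above, as $r_z\to1$.

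Next I would sandwich the potential. Let $g_{\mathbb{B}(0,\rho)}$ denote the explicit real--analytic Kähler--Einstein potential of the ball of radius $\rho$; in particular $g_{\mathbb{B}(0,1)}(\zeta)=-\log(1-|\zeta|^2)$, and $g_{\mathbb{B}(0,\rho)}$ comes from it by dilation. The comparison principle for \mref{MAOri}--\mref{BVal} (monotonicity of the maximal solution with respect to the domain), applied to the inclusions $\mathbb{B}(0,r_z)\subset\Omega_z\subset\mathbb{B}(0,1)$, gives on $\mathbb{B}(0,r_z)$ the estimate $0\le\varphi_z\le g_{\mathbb{B}(0,r_z)}-g_{\mathbb{B}(0,1)}$, where $\varphi_z:=g_z-g_{\mathbb{B}(0,1)}$. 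The upper bound is explicit and tends to $0$ uniformly on compact subsets of $\mathbb{B}(0,1)$, with rate $O(1-r_z)$, so $\varphi_z\to0$ in $C^0_{\mathrm{loc}}(\mathbb{B}(0,1))$.

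Finally I would run a uniform interior regularity argument. Setting $\omega_0:=dd^cg_{\mathbb{B}(0,1)}$, equation \mref{MAOri} for $g_z=g_{\mathbb{B}(0,1)}+\varphi_z$ becomes $\det(\omega_0+dd^c\varphi_z)=e^{(n+1)\varphi_z}\det\omega_0$ on $\mathbb{B}(0,1)$: a complex Monge--Ampère equation of negative--Ricci type with a \emph{fixed smooth} background. Yau's interior second--order estimate will bound $\mathrm{tr}_{\omega_0}(\omega_0+dd^c\varphi_z)$ on $\mathbb{B}(0,\tfrac12)$ in terms of $\sup_{\mathbb{B}(0,3/4)}|\varphi_z|$ and the fixed geometry of $\omega_0$; combined with $\det(\omega_0+dd^c\varphi_z)\asymp\det\omega_0$ this gives a uniform two--sided bound for the metric of $g_z$ on $\mathbb{B}(0,\tfrac12)$, hence uniform ellipticity there. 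Evans--Krylov and Schauder bootstrapping then furnish uniform $C^k$ bounds on $\varphi_z$ near $0$ for every $k$, and interpolation with $\varphi_z\to0$ in $C^0$ gives $\varphi_z\to0$ in $C^4$ near $0$. Thus the metric of $g_z$, its inverse and their derivatives up to order two converge at $0$ to those of $g_{\mathbb{B}(0,1)}$, so the curvature tensor of $g_z$ at $0$ converges to that of complex hyperbolic space; by continuity of the supremand in the $2$--jet and compactness of $S(0,1)\times S(0,1)$ the supremum tends to $0$, which is the assertion by the first step. The hard part will be to make all these estimates \emph{uniform over the family $\{\Omega_z\}$}, whose boundaries carry no a priori regularity; I expect this to be handled by transplanting everything onto the fixed smooth ball $\mathbb{B}(0,1)$, so that the only remaining family--dependent input is the uniform $C^0$--smallness of $\varphi_z$ coming from the comparison principle.
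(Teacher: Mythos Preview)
Your plan is correct and complete in outline. The reduction via biholomorphic invariance to the origin of $\Omega_z$, the sandwich $g_{\mathbb{B}(0,1)}\le g_z\le g_{\mathbb{B}(0,r_z)}$ from the comparison principle, and the bootstrap $C^0\Rightarrow C^4$ via interior complex Monge--Amp\`ere regularity on the fixed background $\omega_0$ all work as you describe; the uniformity you flag at the end is indeed available precisely because the background $\omega_0$ and the domain $\mathbb{B}(0,3/4)$ are fixed, so the constants in the Laplacian estimate, Evans--Krylov, and Schauder do not depend on $z$.

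The paper's argument is organized differently. Rather than running the interior PDE estimates by hand, it packages them into a black-box stability lemma (a consequence of Theorem~7.5 in Cheng--Yau): if $(D_\nu)$ is an exhaustion of a bounded pseudoconvex $D$ by smooth strictly pseudoconvex domains, then the K\"ahler--Einstein curvatures and metrics of $D_\nu$ converge locally uniformly to those of $D$. The paper then applies this lemma \emph{twice}: first, for each $\nu$, to choose a smooth strictly pseudoconvex $D_\nu$ with $\overline{\Omega_{\nu-1}}\subset D_\nu\subset\Omega_\nu$ whose K\"ahler--Einstein data at $0$ are $2^{-\nu}$--close to those of $\Omega_\nu$; second, since the $D_\nu$ now exhaust $\mathbb{B}(0,1)$, to conclude that their data at $0$ converge to those of the ball. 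A triangle inequality finishes. Your approach effectively unpacks the proof of that Cheng--Yau lemma (which itself rests on comparison plus interior estimates), trading a clean citation for a more self-contained argument; the paper's route avoids rederiving the regularity theory but requires the extra bookkeeping of building the intermediate exhaustion $(D_\nu)$.
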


$\newline$
We refer the reader to the proof of Theorem \ref{BiSqueez} for a more precise statement in terms of the squeezing function of the domain. In comparison with Theorems \ref{CYLoc} and \ref{Main}, Theorem \ref{BiSqueez} requires neither regularity assumptions on the boundary of the domain nor the strict pseudoconvexity at $q$, but gives no boundary regularity for the Kähler-Einstein potential. However, it is difficult to find
 geometric condition ensuring that the squeezing tends to one at a boundary point of a given domain.
\\We can apply Theorem \ref{BiSqueez} at $\mathcal{C}^2$ strictly pseudoconvex boundary points of a domain admitting a Stein neighborhood basis (see \cite{JooKi}), at $\mathcal{C}^2$ strictly convex boundary points of bounded domains (see \cite{KZ}), but also at every boundary point of the Fornaess-Wold domain, which is convex but not strictly pseudoconvex and has a boundary of class $\mathcal{C}^2$ (see \cite{FoWo}).

This paper is organized as follows. In Section 1, we introduce some notations and formulas that will be used in the other sections. In Section 2, we recall the construction of asymptotically Kähler-Einstein metrics developped by C. Fefferman in \cite{Fef2}. We provide details about the regularity of the functions involved in the construction and on their defining set. In Section 3, we first estimate the norm of the gradient of the difference between the potential of the Kähler-Einstein metric and the potential of an asymptotically Kähler-Einstein metric constructed in Section 2. Then we use these estimates to improve the $\mathcal{C}^0$ estimate. We also derive the higher order estimates, and we use these to prove Theorems \ref{CYLoc} and \ref{Main} at the end of the Section. In Section 4 we prove Theorem \ref{BiSqueez}.

\vspace{2mm}
{\sl Acknowledgements. I would like to thank Professor S. Fu and Professor J. Bland for their kind hospitality during my visit in their institutions and the fruitful discussions.}

\section{Preliminaries and notations}
Throughout the paper, we use Einstein summation notation.

\subsection{Algebra}

We denote by $\mathcal{M}_n \left(\mathbb{C}\right)$ the set of square matrices of size $n$, with complex coefficients. In this set, we denote by 0 the null matrix and by I the identity matrix.
\\Let $A = \left(A_{i\bar{j}}\right),B = \left(B_{i\bar{j}}\right)\in \mathcal{M}_n \left(\mathbb{C}\right)$, $v=\left(v_i\right) \in \mathbb{C}^n$, $w=\left(w_j\right) \in \mathbb{C}^n$.
\\If $A$ is invertible, we note $\left(A^{i\bar{j}}\right)=A^{-1}$. It is characterized by the relations $A^{i\bar{k}}A_{k\bar{j}}=A_{i\bar{k}}A^{k\bar{j}}=1$ if $i=j$, 0 otherwise. Especially, $Tr\left(A^{-1}B\right)=A^{i\bar{j}}B_{j\bar{i}}$, where $Tr$ denotes the trace function. We denote by $Det\left(A\right)$ the determinant of $A$. We denote by $^tA=\left(A_{j\bar{i}}\right)$ the transpose matrix of $A$, and by $\overline{A}=\left(\overline{A_{i\bar{j}}}\right)$ its conjugate. 
\\We denote by $\displaystyle \mathcal{H}_n:=\{A\in \mathcal{M}_n\left(\mathbb{C}\right) / ^tA=\overline{A} \}$ the space of Hermitian matrices of order $n$. If $A \in \mathcal{H}_n$, we note $\langle v ; w\rangle  _A:=A_{i\bar{j}}v_i \overline{w_j}$. Recall that $\langle v ; v\rangle  _A \in \mathbb{R}$.
\\If $A,B\in \mathcal{H}_n$, we define the following relations: 
$$\displaystyle B\geq A \iff \forall v\in \mathbb{C}^n\setminus\{0\},\quad \langle v ; v\rangle  _B\geq \langle v ; v\rangle  _A , \quad B> A \iff \forall v\in \mathbb{C}^n\setminus\{0\},\quad \langle v ; v\rangle  _B> \langle v ; v\rangle  _A.$$
We note $\mathcal{H}_n^+:=\{M\in \mathcal{H}_n / M\geq 0\}$ and $\mathcal{H}_n^{++}:=\{M\in \mathcal{H}_n / M> 0\}$. If $A\in \mathcal{H}_n^+$, we note $\displaystyle \left \lvert v \right \rvert _A:=\langle v; v \rangle_A^\frac{1}{2}$.
\\We will need the following facts that we do not prove: 

\begin{prop}

\label{BHP}
\begin{enumerate}[align=left, leftmargin=*, noitemsep] \item Let $\displaystyle A \in \mathcal{H}_n^+$.Then there exists $R\in \mathcal{H}_n^+$ such that $R^2=A$. The matrix $R$ is called a square root of $A$.
\item Let $A\in \mathcal{H}_n^+$. Then $\displaystyle 0 \leq A \leq Tr\left(A\right)I$.
\item Let $\displaystyle A\in \mathcal{H}_n^{++}$. Then there exist $0<\lambda\leq \Lambda$ such that $\lambda I \leq A \leq \Lambda I$.
\end{enumerate}
\end{prop}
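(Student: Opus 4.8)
\noindent All three assertions are classical facts of linear algebra; the plan is to deduce them from the spectral theorem for Hermitian matrices, which I take as the only external input: for $A\in\mathcal{H}_n$ there are a unitary matrix $U$ and real numbers $\lambda_1,\dots,\lambda_n$ (the eigenvalues of $A$) with $A=UDU^{*}$, where $D:=\mathrm{diag}(\lambda_1,\dots,\lambda_n)$, and moreover $A\geq 0$ (resp.\ $A>0$) if and only if every $\lambda_i\geq 0$ (resp.\ every $\lambda_i>0$).

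For (1), given $A\in\mathcal{H}_n^{+}$ write $A=UDU^{*}$ with all $\lambda_i\geq 0$ and put $R:=U\,\mathrm{diag}(\sqrt{\lambda_1},\dots,\sqrt{\lambda_n})\,U^{*}$. Since $\mathrm{diag}(\sqrt{\lambda_1},\dots,\sqrt{\lambda_n})$ is real and diagonal, hence Hermitian, so is its unitary conjugate $R$, i.e.\ $R\in\mathcal{H}_n$; its eigenvalues $\sqrt{\lambda_i}$ are $\geq 0$, so $R\in\mathcal{H}_n^{+}$; and $R^{2}=U\,\mathrm{diag}(\lambda_1,\dots,\lambda_n)\,U^{*}=A$.

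For (2) and (3), diagonalizing the Hermitian form $v\mapsto\langle v;v\rangle_A$ in an orthonormal basis of $(\mathbb{C}^n,\langle\cdot\,;\cdot\rangle_I)$ gives a unitary change of coordinates after which, in the new coordinates, $\langle v;v\rangle_A=\sum_{i=1}^{n}\lambda_i\lvert v_i\rvert^{2}$, while $\langle v;v\rangle_I=\sum_{i=1}^{n}\lvert v_i\rvert^{2}$ is unchanged and $Tr(A)=\lambda_1+\cdots+\lambda_n$. For (2), $A\in\mathcal{H}_n^{+}$ forces $0\leq\lambda_i\leq Tr(A)$ for each $i$, hence $0\leq\sum_i\lambda_i\lvert v_i\rvert^{2}\leq Tr(A)\sum_i\lvert v_i\rvert^{2}$ for every $v$, that is $0\leq A\leq Tr(A)\,I$. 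For (3), $A\in\mathcal{H}_n^{++}$ gives $\lambda:=\min_i\lambda_i>0$ and $\Lambda:=\max_i\lambda_i>0$, and then $\lambda\sum_i\lvert v_i\rvert^{2}\leq\sum_i\lambda_i\lvert v_i\rvert^{2}\leq\Lambda\sum_i\lvert v_i\rvert^{2}$, i.e.\ $\lambda I\leq A\leq\Lambda I$. (Alternatively, (3) can be obtained without the spectral theorem: $v\mapsto\langle v;v\rangle_A$ is continuous and strictly positive on the compact unit sphere $S(0,1)$ of $\mathbb{C}^n$, so it attains a minimum $\lambda>0$ and a maximum $\Lambda$ there, and homogeneity of degree two propagates $\lambda\langle v;v\rangle_I\leq\langle v;v\rangle_A\leq\Lambda\langle v;v\rangle_I$ to all $v\in\mathbb{C}^n$.)

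There is no genuine obstacle: the only nontrivial ingredient is the spectral theorem, together with the elementary remark that unitary conjugation preserves $\mathcal{H}_n^{+}$ and the trace; this is presumably why the statement is recorded here without proof.
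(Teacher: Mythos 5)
Your proof is correct: the spectral theorem gives (1) via $R=U\sqrt{D}\,U^{*}$, and the eigenvalue bounds $0\leq\lambda_i\leq Tr(A)$ (resp.\ $\min_i\lambda_i>0$) yield (2) and (3) exactly as you argue, with the unitary change of coordinates leaving $\langle v;v\rangle_I$ and the trace unchanged. Note that the paper deliberately states this proposition without proof (``facts that we do not prove''), so there is no argument in the text to compare against; your spectral-theorem derivation, together with the compactness alternative for (3), is the standard way to fill it in and is sound.
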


\subsection{Functions and Kähler geometry in open sets of $\mathbb{C}^n$}
We work with the usual topology on $\mathbb{C}^n$, induced by the usual Euclidean norm, that we note $\lvert\cdot \rvert$. For $p\in \mathbb{C}^n$ and $r>0$, we note $S(p,r)$, respectively $B(p,r)$ the Euclidean sphere, respectively the Euclidean open ball, of center $p$ and radius $r$.
\\Let $U \subset \mathbb{C}^n$ be a non-empty open set, and let $z\in U$.
Let $k\in \mathbb{N}$, and let $\epsilon\in [0,1]$. 

\subsubsection{Functions}
We denote by $\mathcal{C}^{k+\epsilon}\left(U\right)$ the set of real valued functions having derivatives up to order $k$ and such that all these derivatives are Hölder of exponent $\epsilon$, and by $\mathcal{C}^\omega\left(U\right)$ the set of analytic functions in $U$. We simply note $\mathcal{C}^{k}\left(U\right):=\mathcal{C}^{k+0}\left(U\right)$.
\\A function $f\in \mathcal{C}^{k+\epsilon}\left(U\right)$ is in $\mathcal{C}^{k+\epsilon}\left(\overline{U}\right)$ if all its derivatives up to order $k$ extend continuously to the closure $\overline{U}$ of $U$.
\\If $f\in \mathcal{C}^{k+\epsilon}\left(U\right)$ and $\left(i_1,j_1,\dots,i_n,j_n\right) \in \mathbb{N}^{2n}$ satisfies $s:=\sum_{l=1}^n(i_l+j_l) \leq k$, we denote by $\displaystyle f_{i_1\overline{j_1}\dots i_n\overline{j_n}}:=\frac{\partial^s f}{\partial z_1^{i_1}\partial\overline{z_1}^{j_1}\dots \partial z_n^{i_n}\partial\overline{z_n}^{j_n}}$. In particular, this notation is consistent with the notation of complex matrices introduced above. Also, observe that if $f\in \mathcal{C}^1\left(U\right)$, then for every $1\leq j \leq n$, we have $f_{\bar{j}}=\overline{f_j}$.
\\A function $f \in \mathcal{C}^2\left(U\right)$ is plurisubharmonic at $z$, respectively strictly plurisubharmonic at $z$, if $\left(f_{i\bar{j}}(z)\right)\geq 0$, respectively $\left(f_{i\bar{j}}(z)\right)> 0$. A function $f\in \mathcal{C}^2\left(U\right)$ is (strictly) plurisubharmonic in $U$ if it is (strictly) plurisubharmonic at every point of $U$. 
\\We will need the following fact that we do not prove:
\begin{prop}
\label{PSHK}
Let $U\subset \mathbb{C}^n$ be an open bounded set and let $f \in \mathcal{C}^2\left(\overline{U}\right)$ be a strictly plurisubharmonic function. Then there exist constants $0<\lambda\leq \Lambda$ such that $\lambda I \leq \left(f_{i\bar{j}}\right) \leq \Lambda I$ on $\overline{U}$.
\end{prop}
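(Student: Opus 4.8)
I will prove Proposition~\ref{PSHK} by a compactness argument; the hypotheses that actually get used are only that $U$ is bounded (so $\overline{U}$ is compact) and that $f\in\mathcal{C}^2\left(\overline{U}\right)$ (so the coefficients $f_{i\bar{j}}$ are continuous up to $\overline{U}$), together with strict plurisubharmonicity, which is needed for the lower bound alone.

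The first step is to reduce the two matrix inequalities to scalar bounds on a quadratic form. For $(z,v)\in \overline{U}\times\mathbb{C}^n$ consider the map $(z,v)\mapsto \langle v;v\rangle_{\left(f_{i\bar{j}}(z)\right)}=f_{i\bar{j}}(z)\,v_i\overline{v_j}$: it is jointly continuous, by the assumption $f\in\mathcal{C}^2\left(\overline{U}\right)$ for the $z$ dependence and trivially for the $v$ dependence, and it is homogeneous of degree $2$ in $v$. Writing $v=\lvert v\rvert\,\hat v$ with $\hat v\in S(0,1)$ for $v\neq 0$, and recalling that $\langle v;v\rangle_{\lambda I}=\lambda\lvert v\rvert^2$, one sees that the inequalities $\lambda I\leq \left(f_{i\bar{j}}(z)\right)\leq \Lambda I$ on $\overline{U}$ are equivalent to
\[
\lambda\leq \langle v;v\rangle_{\left(f_{i\bar{j}}(z)\right)}\leq \Lambda\qquad\text{for all }(z,v)\in\overline{U}\times S(0,1).
\]
Since $U$ is bounded, $\overline{U}\times S(0,1)$ is compact, so it suffices to bound the above continuous function on this compact set.

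For the upper bound, the continuous function $(z,v)\mapsto \langle v;v\rangle_{\left(f_{i\bar{j}}(z)\right)}$ attains a maximum $\Lambda$ on $\overline{U}\times S(0,1)$, giving $\left(f_{i\bar{j}}(z)\right)\leq\Lambda I$ for all $z\in\overline{U}$; alternatively one may invoke Proposition~\ref{BHP}(2), which gives $\left(f_{i\bar{j}}(z)\right)\leq Tr\!\left(\left(f_{i\bar{j}}(z)\right)\right)I=\bigl(\sum_i f_{i\bar i}(z)\bigr)I$ pointwise, and then bound the continuous function $z\mapsto\sum_i f_{i\bar i}(z)$ on the compact $\overline{U}$. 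Note that strict plurisubharmonicity is not used here. For the lower bound, for every $(z,v)\in\overline{U}\times S(0,1)$ one has $v\neq 0$ and $f$ strictly plurisubharmonic at $z$, hence $\langle v;v\rangle_{\left(f_{i\bar{j}}(z)\right)}>0$; being the minimum of a continuous, strictly positive function on the compact set $\overline{U}\times S(0,1)$, the number $\lambda:=\min_{(z,v)\in\overline{U}\times S(0,1)}\langle v;v\rangle_{\left(f_{i\bar{j}}(z)\right)}$ is attained and therefore strictly positive, so $\lambda I\leq\left(f_{i\bar{j}}(z)\right)$ on $\overline{U}$. One could equivalently remark that $A\mapsto\min_{v\in S(0,1)}\langle v;v\rangle_A$ is continuous (indeed $1$-Lipschitz) on $\mathcal{H}_n$, compose with the continuous map $z\mapsto\left(f_{i\bar{j}}(z)\right)$, and minimize over $\overline{U}$, the minimum being positive by Proposition~\ref{BHP}(3) applied pointwise. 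Replacing $\lambda$ by $\min(\lambda,\Lambda)$ if necessary arranges $0<\lambda\leq\Lambda$, which finishes the proof.

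Since everything comes down to extracting uniform bounds from continuity and compactness, there is no substantial obstacle. The only point requiring care is the passage to the closure: one must use the definition of $\mathcal{C}^2\left(\overline{U}\right)$ to guarantee that the second derivatives $f_{i\bar{j}}$ extend continuously to $\overline{U}$, so that the relevant extrema over $\overline{U}\times S(0,1)$ are genuinely attained and the constants $\lambda,\Lambda$ are valid on all of $\overline{U}$ and not merely on $U$.
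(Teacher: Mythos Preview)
The paper does not prove this proposition; it is introduced with the sentence ``We will need the following fact that we do not prove''. Your compactness argument is correct and is the standard route: continuity of $(z,v)\mapsto f_{i\bar j}(z)v_i\overline{v_j}$ on the compact set $\overline{U}\times S(0,1)$ gives the upper bound immediately, and strict positivity of this function (from strict plurisubharmonicity) forces the minimum to be a strictly positive $\lambda$.
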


\subsubsection{Kähler metrics}
A Kähler metric in $U$ is an element of $\mathcal{C}\left(U,\mathcal{H}_n^{++}\right)$, that is, a matrix $\left(g_{i\bar{j}}\right)$ with continuous coefficients in $U$ and such that for every $z\in U$, $\left(g_{i\bar{j}}(z)\right)\in \mathcal{H}_n^{++}$.
\\We say that a Kähler metric $\left(g_{i\bar{j}}\right)$ is induced by a function $u\in \mathcal{C}^2\left(U\right)$, called a (Kähler) potential for $\left(g_{i\bar{j}}\right)$, if $\left(u_{i\bar{j}}\right)=\left(g_{i\bar{j}}\right)$ in $U$.
\\If $v=\left(v_i\right) \in \mathbb{C}^n$, $w=\left(w_j\right) \in \mathbb{C}^n$, $f,g\in \mathcal{C}^2\left(U\right)$ and $g$ is a Kähler potential in $U$, we define the following quantities: 
$\newline$ $\bullet$
 $\langle v;w\rangle_{g}:=\langle v;w\rangle_{\left(g_{i\bar{j}}\right)}=g_{i\bar{j}}v_i\overline{w_j}$: the scalar product of $v$ and $w$, for the metric $\left(g_{i\bar{j}}\right)$.
$\newline$ $\bullet$ $\displaystyle \lvert v \rvert _g:=\langle v;v\rangle_{g}^\frac{1}{2}$: the norm of $v$ for $\left(g_{i\bar{j}}\right)$.
$\newline$ $\bullet$ $\displaystyle Ric(g):=-Log\left(Det\left(g_{i\bar{j}}\right)\right)$: the Ricci form of $\left(g_{i\bar{j}}\right)$.
$\newline$ $\bullet$ $\displaystyle \left\lvert \nabla f \right \rvert_g:=\lvert \left(f_i\right) \rvert _{\left(g^{i\bar{j}}\right)}=\left(g^{i\bar{j}}f_if_{\bar{j}}\right)^\frac{1}{2}$: the norm of the complex gradient of $f$ for $\left(g_{i\bar{j}}\right)$.
$\newline$ $\bullet$ $ \displaystyle \Delta_g f:=Tr\left(\left(g^{i\bar{j}}\right)\left(f_{i\bar{j}}\right)\right)=g^{i\bar{j}}f_{j\bar{i}}$: the Laplacian of $f$ for $\left(g_{i\bar{j}}\right)$.
$\newline$
\\Moreover, if $g\in\mathcal{C}^4\left(U\right)$ and $v,w\neq 0$, we also define: 
$\newline$ $\bullet$ $ \displaystyle \forall 1\leq i,j,k,l \leq n,\quad R_{i\bar{j}k\bar{l}}(g):=-g_{i\bar{j}k\bar{l}} +\sum_{1\leq p,q \leq n}g_{ik\bar{p}}g^{\bar{p}q}g_{q\bar{j}\bar{l}}$: the curvature coefficients of $\left(g_{i\bar{j}}\right)$.
$\newline$ $\bullet$ $ \displaystyle Bis_{g}(v,w):=\displaystyle \frac{R_{i\bar{j}k\bar{l}}(g)v_i\overline{v_j}w_k \overline{w_l}}{\lvert v \rvert_{g} ^2 \lvert w \rvert_{g} ^2}$: the holomorphic bisectional curvature of $\left(g_{i\bar{j}}\right)$, between directions $v$ and $w$.
$\newline$ $\bullet$ $H_g(v):=Bis_g(v,v)$: the holomorphic sectional curvature of $\left(g_{i\bar{j}}\right)$, in the direction $v$.
\\If needed, we will specify the point $z$ at which these quantities are computed by using the following notations: $\langle v;w\rangle_{g,z}$, $Ric(g)(z)$, $\left\lvert \nabla f \right \rvert_{g,z}$, $\Delta_g f(z)$, $R_{i\bar{j}k\bar{l}}(g)(z)$, $Bis_{g,z}(v,w)$, etc. 
\\In the special case of the usual metric on $\mathbb{C}^n$, that is to say $\tilde{g}=\lvert \cdot \rvert ^2$ (or, equivalently, $\left(\tilde{g}_{i\bar{j}}\right)=I$), we simply note $\langle v;w\rangle$, respectively $\left\lvert \nabla f \right \rvert$, instead of $\langle v;w\rangle_{\tilde{g}}$, respectively $\left\lvert \nabla f \right \rvert_{\tilde{g}}$. We proceed likewise with the other notations.
\\Recall that the metric induces a distance function, that we denote by $d_g$. 
We say that the metric is complete if the space $\left(U,d_g\right)$ is complete.
\\We say that a Kähler metric induced by a potential $g \in \mathcal{C}^4\left(U\right)$ is Kähler-Einstein if there exists $\lambda \in \mathbb{R}$ such that $\left(Ric(g)_{i\bar{j}}\right)=\lambda\left(g_{i\bar{j}}\right)$. We point out that in this paper, all the involved Kähler-Einstein metrics satisfy $\left(Ric(g)_{i\bar{j}}\right)=-(n+1)\left(g_{i\bar{j}}\right)$.
\\Note that by definition every strictly plurisubharmonic function in $U$ induces a Kähler potential in $U$. There is another way to construct Kähler potentials from strictly plurisubharmonic negative functions in $U$: 

\begin{prop}
\label{-Log(-psi)}
Let $\psi \in \mathcal{C}^2\left(U\right)$ be a negative strictly plurisubharmonic function. Set $g:=-Log\left(-\psi\right)$. Then $g$ is a Kähler potential in $U$, and the following formulas hold in $U$: 
\begin{eqnarray}
&\label{-Log(-psi)Eq}
\left(-\psi\right)\left( g_{i\bar{j}}\right)=\left(\psi_{i \bar{j}}\right) +\left(\frac{\psi_i \psi_{\bar{j}}}{-\psi}\right),
\\ &\label{-Log(-psi)InvEq}
\left(g^{i\bar{j}}\right)=\left(-\psi\right)\left(\psi^{i \bar{j}}\right) -\left(-\psi\right)\frac{\left(\psi^{i \bar{j}}\right)\left(\psi_{i} \psi_{\bar{j}}\right)\left(\psi^{i \bar{j}}\right)}{-\psi +\lvert \nabla \psi \rvert _\psi^2}.
\end{eqnarray}
\end{prop}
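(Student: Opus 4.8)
The plan is to derive both displayed formulas by a direct computation of the complex Hessian of $g=-Log(-\psi)$, and then to obtain the inverse by recognizing $(-\psi)(g_{i\bar j})$ as a rank-one perturbation of $(\psi_{i\bar j})$. Since $\psi<0$ on $U$, the composition $g=-Log(-\psi)$ is well defined and, being the composition of $\psi\in\mathcal{C}^2(U)$ with the analytic function $t\mapsto -Log(-t)$ on $]-\infty,0[$, it belongs to $\mathcal{C}^2(U)$. The chain rule gives $g_i=\dfrac{\psi_i}{-\psi}$ for every $i$, and differentiating once more with respect to $\overline{z_j}$ via the quotient rule yields $(-\psi)^2 g_{i\bar j}=(-\psi)\psi_{i\bar j}+\psi_i\psi_{\bar j}$; dividing by $-\psi>0$ is exactly \mref{-Log(-psi)Eq}.

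To see that $(g_{i\bar j})$ is positive definite, and hence that $g$ is a Kähler potential, I would use \mref{-Log(-psi)Eq} in the form $(-\psi)(g_{i\bar j})=(\psi_{i\bar j})+\frac{1}{-\psi}(\psi_i\psi_{\bar j})$. Here $(\psi_{i\bar j})\in\mathcal{H}_n^{++}$ because $\psi$ is strictly plurisubharmonic, the matrix $(\psi_i\psi_{\bar j})$ lies in $\mathcal{H}_n^{+}$ since $\langle w;w\rangle_{(\psi_i\psi_{\bar j})}=\lvert\psi_i w_i\rvert^2\geq 0$ for every $w\in\mathbb{C}^n$, and $\frac{1}{-\psi}>0$; so the right-hand side is in $\mathcal{H}_n^{++}$, and then so is $(g_{i\bar j})$ since $-\psi>0$. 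Thus $(g_{i\bar j})$ is a Kähler metric with potential $g$.

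For \mref{-Log(-psi)InvEq} I would invert $(-\psi)(g_{i\bar j})=(\psi_{i\bar j})+\frac{1}{-\psi}(\psi_i\psi_{\bar j})$ with the Sherman--Morrison formula for a rank-one update. The matrix $A:=(\psi_{i\bar j})$ is invertible with $A^{-1}=(\psi^{i\bar j})$ since $A\in\mathcal{H}_n^{++}$, and $(\psi_i\psi_{\bar j})$ has rank at most one, so
\[
\left[(\psi_{i\bar j})+\tfrac{1}{-\psi}(\psi_i\psi_{\bar j})\right]^{-1}=(\psi^{i\bar j})-\frac{\tfrac{1}{-\psi}(\psi^{i\bar j})(\psi_i\psi_{\bar j})(\psi^{i\bar j})}{1+\tfrac{1}{-\psi}\lvert\nabla\psi\rvert_\psi^2},
\]
where the scalar in the denominator equals $\dfrac{-\psi+\lvert\nabla\psi\rvert_\psi^2}{-\psi}$ and is strictly positive (both summands are nonnegative and the first is positive), so the rank-one update formula applies with no degeneracy. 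Since the left-hand side is $\frac{1}{-\psi}(g^{i\bar j})$, multiplying through by $-\psi$ produces \mref{-Log(-psi)InvEq}. Alternatively, one can bypass Sherman--Morrison and verify \mref{-Log(-psi)InvEq} directly by multiplying its claimed right-hand side by $(g_{i\bar j})$ and using \mref{-Log(-psi)Eq} to check that the product is $I$. There is no genuine obstacle in this proposition; the only point that needs a little attention is matching the index contractions in the rank-one term $(\psi^{i\bar j})(\psi_i\psi_{\bar j})(\psi^{i\bar j})$ with the convention $\lvert\nabla\psi\rvert_\psi^2=\psi^{i\bar j}\psi_i\psi_{\bar j}$.
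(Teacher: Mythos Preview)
Your proof is correct and follows essentially the same approach as the paper: both derive \mref{-Log(-psi)Eq} by the chain rule and then invert the rank-one perturbation $(\psi_{i\bar j})+\frac{1}{-\psi}(\psi_i\psi_{\bar j})$. The only cosmetic difference is that the paper factors out a Hermitian square root $R$ of $(\psi_{i\bar j})$ and inverts $I+B$ via the identity $B^2=Tr(B)B$, whereas you invoke Sherman--Morrison directly; you also make explicit the positive-definiteness of $(g_{i\bar j})$, which the paper leaves implicit.
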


\begin{proof}[Proof of Proposition \ref{-Log(-psi)}]
The function $g$ is well defined and of class $\mathcal{C}^2$ in $U$ by construction, and formula \mref{-Log(-psi)Eq} directly comes from the chain rule.
\\Let $R$ be a square root of $\left(\psi_{i \bar{j}}\right)$. Then $R$ is invertible because $Det\left(R\right)^2=Det\left(\psi_{i \bar{j}}\right)\neq 0$. Set $\displaystyle B:=R^{-1}\left(\frac{\psi_i \psi_{\bar{j}}}{-\psi}\right)R^{-1}$ and $A:=\left(-\psi\right)R^{-1}\left(g_{i\bar{j}}\right)R^{-1}=I+B$. Since the rank of $B$ is 1, we have $B^2=Tr\left(B\right)B$. Since $\displaystyle Tr\left(B\right)=\frac{\lvert \nabla \psi \rvert _\psi^2}{-\psi}=\frac{\psi^{i\bar{j}}\psi_i \psi_{\bar{j}}}{-\psi}\geq 0 > -1$, we can do the following computation: 
$$A\left(I-\frac{B}{1+Tr\left(B\right)}\right)=I+\left(\frac{-1}{1+Tr(B)}+1-\frac{Tr(B)}{1+Tr(B)}\right)B=I,$$
and likewise we have $\displaystyle \left(I-\frac{B}{1+Tr\left(B\right)}\right)A=I$. Hence $A$ is invertible, and its inverse is $\displaystyle A^{-1}=\left(I-\frac{B}{1+Tr\left(B\right)}\right)=I-R^{-1}\frac{\left(\psi_{i} \psi_{\bar{j}}\right)}{-\psi +\lvert \nabla \psi \rvert _\psi^2}R^{-1}$. Therefore we obtain the formula \mref{-Log(-psi)InvEq}: 
$$\left( g^{i\bar{j}}\right)=(-\psi)R^{-1}A^{-1}R^{-1}=(-\psi)\left(\psi^{i \bar{j}}\right) -(-\psi)\frac{\left(\psi^{i \bar{j}}\right)\left(\psi_{i} \psi_{\bar{j}}\right)\left(\psi^{i \bar{j}}\right)}{-\psi + \lvert \nabla \psi \rvert _\psi^2}.$$
\end{proof}

$\newline$
Let $\Omega\subset \mathbb{C}^n$ be a domain, let $k\geq 1$ be an integer, and let $q \in \partial \Omega$. We say that $\partial \Omega$ is of class $\mathcal{C}^k$ in a neighborhood of $q$ if there exists a defining function of class $\mathcal{C}^k$ of $\Omega$ in a neighborhood of $q$, that is, an open set $V\subset \mathbb{C}^n$ containing $q$, and a function $\psi \in \mathcal{C}^k\left(V\right)$ satisfying $\Omega \cap V =\{ \psi <0 \}$ and $\forall z \in \partial \Omega \cap V = \{\psi=0\}$, $\lvert \nabla \psi\rvert_z \neq 0$. If $k\geq 2$ and $\partial \Omega$ is of class $\mathcal{C}^k$ in a neighborhood of $q$, we say that $\partial \Omega $ is strictly pseudoconvex in a neighborhood of $q$ if there exists a bounded open set $V\subset \mathbb{C}^n$ containing $q$, and a function $\psi \in \mathcal{C}^k\left(V\right)$ satisfying $\Omega \cap V =\{ \psi <0 \}$, $\forall z \in \partial \Omega \cap V = \{\psi=0\}$, $\lvert \nabla \psi\rvert_z \neq 0$ and $\left(\psi_{i\bar{j}}\right)>0$ in $V$. The function $\psi$ is a strictly plurisubharmonic defining function for $\partial \Omega \cap V$.

The two following results will be needed in Sections 2 and 3: 

\begin{prop}
\label{Grad>0}
Let $\Omega\subset \mathbb{C}^n$ be a domain, let $n\geq 2$ be an integer, and let $q\in \partial \Omega$. Assume that there exists a neighborhood of $q$ on which $\partial \Omega$ is of class $\mathcal{C}^{1}$. Let $V\subset \mathbb{C}^n$ be an open set  containing $q$, let $\psi \in \mathcal{C}^1\left(V\right)$ be a defining function for $\partial \Omega \cap V$. Let $U\subset \overline{U}\subset V$ be a bounded open set containing $q$.
Then, there exists a constant $\epsilon>0$ such that $\displaystyle \inf_{\overline{U}\cap\{\lvert \psi \rvert \leq \epsilon\}} \lvert \nabla \psi \rvert >0.$
\end{prop}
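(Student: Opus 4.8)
The plan is to argue by contradiction, exploiting the compactness of $\overline{U}$ together with the continuity of $\psi$ and $\nabla\psi$ on $V$. Suppose the conclusion fails. Then for every $\epsilon>0$ we have $\displaystyle\inf_{\overline{U}\cap\{\lvert\psi\rvert\leq\epsilon\}}\lvert\nabla\psi\rvert=0$; note this set is never empty, since $q\in\overline{U}$ and $\psi(q)=0$ (as $q\in\partial\Omega$), so the infimum is a genuine infimum of a nonnegative quantity. Applying this with $\epsilon=\tfrac1k$ for each positive integer $k$, and using that $\overline{U}\cap\{\lvert\psi\rvert\leq\tfrac1k\}$ is compact (a closed subset of the compact set $\overline{U}$) and $\lvert\nabla\psi\rvert$ is continuous, the infimum is attained: there is a point $z_k\in\overline{U}$ with $\lvert\psi(z_k)\rvert\leq\tfrac1k$ and $\lvert\nabla\psi\rvert_{z_k}=0$.

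Next I would extract a convergent subsequence. Since $\overline{U}$ is bounded and closed, hence compact, after passing to a subsequence we may assume $z_k\to z_\infty$ with $z_\infty\in\overline{U}$. The crucial point is that the hypothesis $\overline{U}\subset V$ guarantees $z_\infty\in V$, so that we are allowed to evaluate the continuous functions $\psi$ and $\nabla\psi$ at $z_\infty$ and pass to the limit. Letting $k\to\infty$ in $\lvert\psi(z_k)\rvert\leq\tfrac1k$ yields $\psi(z_\infty)=0$, so $z_\infty\in\partial\Omega\cap V=\{\psi=0\}$; letting $k\to\infty$ in $\lvert\nabla\psi\rvert_{z_k}=0$ yields $\lvert\nabla\psi\rvert_{z_\infty}=0$.

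This contradicts the defining property of $\psi$, namely that $\lvert\nabla\psi\rvert_z\neq0$ for every $z\in\partial\Omega\cap V$. Hence the assumption was absurd, and a constant $\epsilon>0$ with the required property exists. There is no substantial obstacle in this argument; the only step that needs a word of care is ensuring that the limit point $z_\infty$ still lies inside the open set $V$ on which $\psi$ is a $\mathcal{C}^1$ defining function, and this is precisely what the inclusion $\overline{U}\subset V$ in the statement provides.
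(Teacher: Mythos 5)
Your proof is correct and follows essentially the same route as the paper: a contradiction argument combining compactness of $\overline{U}$ with continuity of $\psi$ and $\nabla\psi$, and the identification $\{\psi=0\}=\partial\Omega\cap V$ from the definition of a defining function. The only cosmetic difference is that you realize the infimum as an exact zero of $\lvert\nabla\psi\rvert$ at each level $\epsilon=1/k$, whereas the paper simply takes a sequence along which $\psi$ and $\lvert\nabla\psi\rvert$ both tend to $0$; both lead to the same limit-point contradiction.
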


\begin{proof}[Proof of Proposition \ref{Grad>0}]
We argue by contradiction. Then there exists a sequence $\left(z_i\right)_{i\in \mathbb{N}} \in \overline{U}^\mathbb{N}$ such that $\displaystyle \lim_{i \to +\infty}\psi(z_i)=\lim_{i \to +\infty}\lvert \nabla \psi \rvert_{z_i}=0$.
Since $\overline{U}$ is compact, we can assume, up to extracting a subsequence, that $(z_i)_{i\in \mathbb{N}}$ converges in $\overline{U}$. Denote by $z$ its limit. By continuity of $\psi$ at $z$, the condition $\displaystyle \lim_{i \to +\infty}\psi\left(z_i\right) =0$ implies $\psi(z)=0$, which means that $z\in \partial \Omega \cap \overline{U}\subset \partial\Omega \cap V$. On the one hand, it implies that $\lvert \nabla \psi \rvert _z>0$ because $\psi$ is a defining function for $\partial \Omega \cap V$. One the other hand, the continuity of the function $\lvert \nabla \psi \rvert$ at $z$ implies that $\displaystyle \lvert \nabla \psi \rvert_z=\lim_{i\to +\infty} \lvert \nabla \psi \rvert_{z_i}=0$. Hence the contradiction.
\end{proof}

\begin{prop}
\label{EstInv}
Let $\Omega\subset \mathbb{C}^n$ be a domain, and $q\in \partial \Omega$. Assume that there exists a neighborhood of $q$ on which $\partial \Omega$ is strictly pseudoconvex and of class $\mathcal{C}^{2}$. Let $V\subset \mathbb{C}^n$ be a bounded domain containing $q$, $\psi \in \mathcal{C}^2\left(V\right)$ be a strictly plurisubharmonic defining function for $\partial \Omega \cap V$. Let $g:=-Log\left(-\psi \right)$. Then for every bounded open set $U\subset \overline{U}\subset V$ there exist $0<\lambda \leq \Lambda$ such that the following inequalities hold on $\overline{\Omega \cap U}$:
\begin{equation}
\label{EstInvEq}
\lambda\frac{\psi^2}{-\psi +\left \lvert \nabla \psi \right \rvert ^2}I\leq\left(g^{i\bar{j}}\right) \leq \Lambda\left(-\psi\right)I.
\end{equation}
\end{prop}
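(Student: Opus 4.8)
The plan is to avoid manipulating the somewhat cumbersome formula \mref{-Log(-psi)InvEq} directly, and instead to sandwich the metric $\left(g_{i\bar j}\right)$ itself between two scalar multiples of the identity and then invert. Applying Proposition \ref{-Log(-psi)} on the open set $\Omega\cap V$, where $\psi<0$, and dividing the identity \mref{-Log(-psi)Eq} by $-\psi$, we get on $\Omega\cap U$
$$\left(g_{i\bar j}\right)=\frac{1}{-\psi}\left(\psi_{i\bar j}\right)+\frac{1}{\psi^2}\left(\psi_i\psi_{\bar j}\right).$$
Since $\overline U$ is a compact subset of the open set $V$, the function $\psi$ restricts to an element of $\mathcal{C}^2\left(\overline U\right)$ that is strictly plurisubharmonic there, so Proposition \ref{PSHK} supplies constants $0<c\le C$ with $cI\le\left(\psi_{i\bar j}\right)\le CI$ on $\overline U$, hence on $\overline{\Omega\cap U}$.

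First I would handle the upper bound. On $\Omega\cap U$ the matrix $\left(\psi_i\psi_{\bar j}\right)$ is Hermitian positive semidefinite of rank at most one (for any $v$, $\psi_i\psi_{\bar j}v_i\overline{v_j}=\lvert\psi_iv_i\rvert^2\ge0$), with trace $\sum_i\lvert\psi_i\rvert^2=\left\lvert\nabla\psi\right\rvert^2$. In particular $\left(g_{i\bar j}\right)\ge\frac{1}{-\psi}\left(\psi_{i\bar j}\right)\ge\frac{c}{-\psi}I$, and inverting gives $\left(g^{i\bar j}\right)\le\frac{-\psi}{c}I$, i.e. the right-hand inequality of \mref{EstInvEq} with $\Lambda=1/c$. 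For the lower bound I would instead bound $\left(g_{i\bar j}\right)$ from above: Proposition \ref{BHP}(2) applied to $\left(\psi_i\psi_{\bar j}\right)\in\mathcal H_n^+$ gives $\left(\psi_i\psi_{\bar j}\right)\le Tr\left(\psi_i\psi_{\bar j}\right)I=\left\lvert\nabla\psi\right\rvert^2I$, so on $\Omega\cap U$
$$\left(g_{i\bar j}\right)\le\frac{C}{-\psi}I+\frac{\left\lvert\nabla\psi\right\rvert^2}{\psi^2}I=\frac{C\left(-\psi\right)+\left\lvert\nabla\psi\right\rvert^2}{\psi^2}\,I.$$
Inverting and using the elementary bound $C\left(-\psi\right)+\left\lvert\nabla\psi\right\rvert^2\le\max(C,1)\left(-\psi+\left\lvert\nabla\psi\right\rvert^2\right)$ (valid since $-\psi\ge0$ and $\left\lvert\nabla\psi\right\rvert^2\ge0$) then gives $\left(g^{i\bar j}\right)\ge\frac{1}{\max(C,1)}\frac{\psi^2}{-\psi+\left\lvert\nabla\psi\right\rvert^2}I$, the left-hand inequality of \mref{EstInvEq} with $\lambda=1/\max(C,1)$.

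It remains to pass from $\Omega\cap U$ to its closure. The functions $\psi$ and $\psi_i$ are continuous on $\overline{\Omega\cap U}\subset V$, and $\left(g^{i\bar j}\right)$ extends continuously to $\overline{\Omega\cap U}$ by formula \mref{-Log(-psi)InvEq}: indeed $\left(\psi^{i\bar j}\right)$ is continuous on $\overline U$ (as $\left(\psi_{i\bar j}\right)\ge cI$ is invertible there with continuous inverse), and the denominator $-\psi+\left\lvert\nabla\psi\right\rvert_\psi^2$ is continuous and strictly positive on the compact set $\overline{\Omega\cap U}$ — it is $>0$ at interior points, while at a boundary point $-\psi$ vanishes but $\nabla\psi\ne0$ (a defining function has nonvanishing gradient on $\partial\Omega\cap V$), so $\left\lvert\nabla\psi\right\rvert_\psi^2\ge\frac1C\left\lvert\nabla\psi\right\rvert^2>0$. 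Passing to the limit in the two (closed) matrix inequalities established above then yields \mref{EstInvEq} on all of $\overline{\Omega\cap U}$. The argument is essentially bookkeeping; the only points that demand care are keeping track of which bound on $\left(g_{i\bar j}\right)$ produces which side of \mref{EstInvEq}, and the continuity statement needed to reach the boundary (where, incidentally, both sides of \mref{EstInvEq} vanish).
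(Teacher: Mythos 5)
Your proof is correct, but it follows a genuinely different route from the paper's. The paper works with the exact inverse formula \mref{-Log(-psi)InvEq}: writing $\frac{1}{-\psi}R\left(g^{i\bar{j}}\right)R=A^{-1}=I-\frac{B}{1+Tr\left(B\right)}$ with $R^2=\left(\psi_{i\bar{j}}\right)$ and $B=R^{-1}\left(\frac{\psi_i\psi_{\bar{j}}}{-\psi}\right)R^{-1}$, it sandwiches $A^{-1}$ between $\frac{1}{1+Tr\left(B\right)}I$ and $I$ (via Proposition \ref{BHP} applied to $\frac{B}{1+Tr\left(B\right)}$), undoes the conjugation to get $\frac{\psi^2}{-\psi+\lvert\nabla\psi\rvert_\psi^2}\left(\psi^{i\bar{j}}\right)\leq\left(g^{i\bar{j}}\right)\leq\left(-\psi\right)\left(\psi^{i\bar{j}}\right)$, and only at the very end uses compactness of $\overline{U}$ to replace $\left(\psi^{i\bar{j}}\right)$ by multiples of $I$. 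You never touch the inverse formula for the estimate itself: you bound the Hessian $\left(g_{i\bar{j}}\right)$ given by \mref{-Log(-psi)Eq} between scalar multiples of $I$, using Proposition \ref{PSHK} for $\left(\psi_{i\bar{j}}\right)$ and the trace bound of Proposition \ref{BHP}, part (2), for the rank-one term, and then invert these scalar comparisons — an elementary eigenvalue fact for positive definite Hermitian matrices which you use without proof, but which is harmless and is also used implicitly elsewhere in the paper. The two arguments cost about the same; the paper's keeps the sharper $\left(\psi^{i\bar{j}}\right)$-weighted sandwich until the last step and naturally produces the denominator $-\psi+\lvert\nabla\psi\rvert_\psi^2$, whereas yours directly yields the Euclidean form $-\psi+\lvert\nabla\psi\rvert^2$ appearing in \mref{EstInvEq} (the two versions differ only by constants bounded on $\overline{U}$, so both are fine). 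Your closing observation — that $\left(g^{i\bar{j}}\right)$ extends continuously to $\overline{\Omega\cap U}$ because $-\psi+\lvert\nabla\psi\rvert_\psi^2$ stays bounded away from zero up to $\partial\Omega\cap V$, so the inequalities pass to the closure where both sides vanish — addresses a point the paper's proof leaves implicit, and you handle it correctly.
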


\begin{proof}[Proof of Proposition \ref{EstInv}]
We use formula \mref{-Log(-psi)InvEq} and notations of Proposition \ref{-Log(-psi)} with $U$ replaced with $\Omega \cap U$. We also use the notations introduced in the proof of Proposition \ref{-Log(-psi)}.
\\According to Proposition \ref{BHP}, we have $\displaystyle \frac{B}{1+Tr(B)} \in \mathcal{H}_n^+$, hence $\displaystyle 0\leq \frac{B}{1+Tr(B)} \leq \frac{Tr(B)}{1+Tr(B)}I$. Since $\displaystyle A^{-1}=I-\frac{B}{1+Tr(B)}$, we deduce $\displaystyle \frac{1}{1+Tr(B)}I=\left(1-\frac{Tr(B)}{1+Tr(B)}\right)I\leq A^{-1}\leq I$. Since $-\psi>0$, we deduce the following: 
$$\frac{-\psi}{-\psi +\left \lvert \nabla \psi \right \rvert_\psi ^2}I\leq\frac{1}{-\psi}R\left(g^{i\bar{j}}\right)R \leq I,$$
$$\frac{\psi^2}{-\psi +\left \lvert \nabla \psi \right \rvert_\psi ^2}\left(\psi^{i \bar{j}}\right)\leq\left(g^{i\bar{j}}\right) \leq \left(-\psi\right)\left(\psi^{i \bar{j}}\right).$$

Moreover, since $\left(\psi^{i \bar{j}}\right)$ is continuous on the compact set $\overline{U}$, there exist $0<\lambda\leq \Lambda$ such that $\lambda I \leq \left(\psi^{i\bar{j}}\right) \leq \Lambda I$ on $\overline{U}$. Hence:
\[\lambda\frac{\psi^2}{-\psi +\left \lvert \nabla \psi \right \rvert_\psi ^2}I\leq\left(g^{i\bar{j}}\right) \leq \Lambda\left(-\psi\right)I.\]
\end{proof}

\section{Construction of local asymptotically Kähler-Einstein metrics}
Let $V$ be an open set. Let $k \geq 2$ be an integer. If $\psi \in \mathcal{C}^{k}(V)$, its Fefferman functional is defined by
$$J(\psi):=(-1)^n Det\left(
\begin{matrix}
&\psi &(\psi_{\bar{j}})
\\ &^t(\psi_i) &(\psi_{i\bar{j}})
\end{matrix}
\right).$$
Then $J(\psi) \in \mathcal{C}^{k-2}(V)$. We observe that
$$J(\psi)=\psi^{n+1}Det\left(\left(-Log(\psi)_{i\bar{j}}\right)\right) \quad \text{on $\{\psi>0\}$},$$
and that the function
$$F:=Log\left(J(\psi)\right)=-Ric\left(-Log(\psi)\right) -\left(-(n+1)Log(\psi)\right)$$
is well defined on $\{\psi>0\}\cap \{Det\left(-Log(\psi)_{i\bar{j}}\right)>0\}$. Especially, if $\left(-Log(\psi)_{i\bar{j}}\right)>0$, $F$ is well defined and measures the defect of $(-Log(\psi))$ to be the potential of a Kähler-Einstein metric: the metric $\left(-Log(\psi)_{i\bar{j}}\right)$ is Kähler-Einstein if and only if $J\left(\psi\right)=1$.
\\Let $\Omega \subset \mathbb{C}^n$ be a domain and $q\in \partial \Omega$. Assume that there exists a neighborhood $V$ of $q$ such that $\partial \Omega \cap V$ is strictly pseudoconvex and of class $\mathcal{C}^k$ with $k\geq 2n+4$. Without loss of generality, we may assume that $V$ is a bounded domain. We describe Fefferman's iterating process in $V$.
\\Let $\varphi \in \mathcal{C}^k(V)$ be a strictly plurisubharmonic defining function for $\partial \Omega \cap V$. Let $\displaystyle U_0:=\{J(-\varphi)>0\}$. Since $\varphi \in \mathcal{C}^2\left(V\right)$ and $J(-\varphi)>0$ on $\partial \Omega \cap V$, the set $U_0$ contains $\partial \Omega \cap V$ and is open. Consider the following constructions on $U_0$: 
$$\varphi^{(1)}:=\frac{\varphi}{J(-\varphi)^{\frac{1}{n+1}}} \quad \text{ and, for $2\leq l \leq n+1$,} \quad \varphi^{(l)}:=\varphi^{(l-1)}\left(1+\frac{1-J(-\varphi^{(l-1)})}{l(n+2-l)} \right).$$
Then, for every $1\leq l\leq n+1$, $\varphi^{(l)}$ is well defined on $U_0$ and $\varphi^{(l)} \in \mathcal{C}^{k-2l}(U_0)$. Moreover, according to the computations done by C. Fefferman in \cite{Fef2}, we have $\displaystyle \frac{J\left(-\varphi^{(l)}\right)-1}{(-\varphi)^l} \in \mathcal{C}^{k-2l-2}\left(U_0\right)$. This ensures that for every integer $1\leq l \leq n+1$, the sets $\displaystyle U_l:=\left\lbrace\left\lvert 1-J(-\varphi^{(l)})\right \rvert< \frac{1}{2}\right\rbrace$ are open and contain $\partial \Omega \cap V$. Consequently, there exist positive constants $r$ and $R$ such that the set
$U:=\left(\cap_{l=0}^{n+1}U_l\right)\cap \left((B(q,R)\cap \partial \Omega)+B(0,r)\right)$
is open, contains $q$, satisfies $\overline{U}\subset V$, and on which every $\varphi^{(l)}$ is a $\mathcal{C}^{k-2l}$ defining function for $\partial \Omega \cap U$. Then according to Proposition \ref{Grad>0}, we can assume (by taking smaller $r$ and $R$ if necessary) that $\displaystyle \min_{1\leq l \leq n+1}\inf_{z\in\overline{U}}\lvert \nabla \varphi^{(l)} \rvert_z>0$ and also $\displaystyle \inf_{z\in \overline{U}}\lvert \nabla \varphi \rvert _z >0$.
\\Since $\partial \Omega \cap V$ is strictly pseudoconvex, we can (by changing $\varphi^{(l)}$ to $\varphi^{(l)}\left(1+t\varphi^{(l)}\right)$ with $t>0$ small and taking smaller $r$ and $R$ if necessary) assume that each $\varphi^{(l)}$ is strictly plurisubharmonic on $\overline{U}$.
\\Finally, the above construction gives, for every $1\leq l \leq n+1$: 
\begin{align*}
\frac{Log\left(J\left(-\varphi^{(l)}\right)\right)}{(-\varphi)^l}&=
\frac{
	Log\left(
	1+\left(J\left(-\varphi^{(l)}\right)-1\right)
		\right)
}
		{(-\varphi)^l}
		\\&=\frac{J\left(-\varphi^{(l)}\right)-1}{(-\varphi)^l}\left(1+\sum_{m=1}^{+\infty}\frac{(-1)^m}{m+1}
		\left(
		J\left(-\varphi^{(l)}
		\right)-1\right)^{m}\right)\in \mathcal{C}^{k-2l-2}\left(\overline{U}\right).
\end{align*}
\\Let us summarize all these facts: 

\begin{prop}
\label{FeffAp}
Let $\Omega \subset \mathbb{C}^n$ be a domain and let $q\in \partial \Omega$. Assume that there exists a neighborhood $V$ of $q$ such that $\partial \Omega \cap V$ is strictly pseudoconvex and of class $\mathcal{C}^k$ with $k\geq 2n+4$.
Then there exists a bounded domain $U$ containing $q$, and a collection of functions $\left(\varphi^{(l)}\right)_{1\leq l \leq n+1}$ satisfying, for every $1\leq l \leq n+1$:
\begin{enumerate}
\item $\varphi^{(l)} \in \mathcal{C}^{k-2l}\left(\overline{U}\right)$,
\vspace{0,1cm}
\item $\Omega\cap \overline{U}=\{\varphi^{(l)}<0\}\cap \overline{U}$,
\vspace{0,1cm}
\item $\displaystyle \inf_{z\in \overline{U}}\lvert \nabla \varphi^{(l)}\rvert _z >0$,
\vspace{0,1cm}
\item $\varphi^{(l)}$ is strictly plurisubharmonic on $\overline{U}$,
\vspace{0,1cm}
\item $\left\lvert 1-J\left(-\varphi^{(l)}\right)\right \rvert \leq \frac{1}{2}$ on $\overline{U}$,
\vspace{0,1cm}
\item $\frac{J\left(-\varphi^{(l)}\right)-1}{(-\varphi)^l} \in \mathcal{C}^{k-2l-2}\left(\overline{U}\right)$,
\vspace{0,1cm}
\item $\frac{\varphi^{(l)}}{\varphi} \in \mathcal{C}^{k-2l}\left(\overline{U}\right)$ and is positive on $\overline{U}$,
\vspace{0,1cm}
\item $\frac{Log\left(J\left(-\varphi^{(l)}\right)\right)}{(-\varphi)^l} \in \mathcal{C}^{k-2l-2}\left(\overline{U}\right)$.
\end{enumerate}
Moreover, we have $\displaystyle \inf_{z\in \overline{U}}\lvert \nabla \varphi \rvert _z >0$.
\end{prop}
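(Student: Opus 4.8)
The plan is to observe that Proposition~\ref{FeffAp} simply packages the iteration carried out in the paragraphs above, so the proof has two jobs: (a) check that every step of that iteration is legitimate on a sufficiently small neighborhood of $q$, and (b) keep careful track of how much boundary regularity each step consumes. First I would fix a strictly plurisubharmonic defining function $\varphi\in\mathcal C^{k}(V)$ for $\partial\Omega\cap V$, which exists by the strict pseudoconvexity hypothesis. Since the Fefferman functional is a second order operator, $J$ maps $\mathcal C^{m}$ into $\mathcal C^{m-2}$, so $J(-\varphi)\in\mathcal C^{k-2}(V)$; as $J(-\varphi)$ is continuous and positive on $\partial\Omega\cap V$, the set $U_0=\{J(-\varphi)>0\}$ is an open neighborhood of $\partial\Omega\cap V$, and $\varphi^{(1)}=\varphi\,J(-\varphi)^{-1/(n+1)}\in\mathcal C^{k-2}(U_0)$ is well defined.

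Next I would run the recursion $\varphi^{(l)}=\varphi^{(l-1)}\bigl(1+\tfrac{1-J(-\varphi^{(l-1)})}{l(n+2-l)}\bigr)$ and prove by induction on $1\le l\le n+1$ that $\varphi^{(l)}\in\mathcal C^{k-2l}(U_0)$: each application of $J$ costs two derivatives, the division by $J(-\varphi)^{1/(n+1)}$ in the first step is legitimate on $U_0$, and the remaining steps only multiply, so no further constraint on the domain is needed at this stage. The one genuinely non-routine ingredient is Fefferman's computation from \cite{Fef2}, which I would quote verbatim: the constants $l(n+2-l)$ are chosen exactly so that each step annihilates one further Taylor coefficient of $J-1$ transverse to the boundary, giving $\frac{J(-\varphi^{(l)})-1}{(-\varphi)^l}\in\mathcal C^{k-2l-2}(U_0)$. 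This is property (6); property (8) then follows by composing with the convergent expansion of $x\mapsto\log(1+x)$, exactly as in the display preceding the statement; and it forces $J(-\varphi^{(l)})\equiv 1$ on $\partial\Omega\cap V$, so each $U_l=\{|1-J(-\varphi^{(l)})|<\tfrac12\}$ is an open neighborhood of $\partial\Omega\cap V$.

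Now I would localize: choose $R,r>0$ so small that $U=\bigl(\bigcap_{l=0}^{n+1}U_l\bigr)\cap\bigl((B(q,R)\cap\partial\Omega)+B(0,r)\bigr)$ is a bounded domain with $q\in U$ and $\overline U\subset V$ (hence $\overline U\subset U_0$), shrinking slightly so that the closed conditions $|1-J(-\varphi^{(l)})|\le\tfrac12$ hold on $\overline U$, which is property (5); restricting the regularity of Step~2 to $\overline U$ gives (1) and (6). On each $U_l$ the factor above is positive, because $1\le l\le n+1$ forces $l(n+2-l)\ge n+1\ge 3$ while $|1-J(-\varphi^{(l)})|<\tfrac12$, so $\varphi^{(l)}/\varphi$ is a product of such positive $\mathcal C^{k-2l}$ factors on $\overline U$: this is property (7), and with it property (2), since $\varphi^{(l)}$ then has the same sign as $\varphi$. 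Properties (3) and the final displayed inequality follow from Proposition~\ref{Grad>0} applied to $\varphi$ and to each $\varphi^{(l)}$, after taking $r$ small enough that $\overline U$ lies inside the corresponding collar $\{\,\lvert\varphi^{(l)}\rvert\le\epsilon\,\}$. Finally, to obtain property (4), since $\partial\Omega\cap V$ is strictly pseudoconvex I would adjust each defining function by a term of the form $t(\varphi^{(l)})^2$ with $t>0$: on $\partial\Omega$ its complex Hessian acquires the rank-one positive term $2t\,\varphi^{(l)}_i\varphi^{(l)}_{\bar j}$, and since the Levi form of $\varphi^{(l)}$ is already positive on the complex tangent space this restores positivity in the missing transverse direction, so the adjusted function is strictly plurisubharmonic on a neighborhood of $\partial\Omega\cap V$; then shrink $r,R$ once more.

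The main obstacle is not any single estimate but the \emph{bookkeeping}: one must arrange the localization and the plurisubharmonizing adjustment in an order, and with constants, for which property (6) survives — Section~3 relies crucially on it — together with properties (2)–(5), (7), (8); in particular the adjustment used for (4) must be checked against the transformation behaviour of $J$ under multiplication of a defining function by a positive function. The quantitative input behind (6), namely that the recursion with the constants $l(n+2-l)$ really does kill the first $l$ transverse Taylor coefficients of $J-1$ along the boundary, is Fefferman's, and I would simply invoke \cite{Fef2} for it; everything else is a matter of shrinking neighborhoods and tracking Hölder exponents.
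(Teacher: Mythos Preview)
Your proposal is correct and follows essentially the same route as the paper: fix a strictly plurisubharmonic defining function, run Fefferman's iteration on $U_0=\{J(-\varphi)>0\}$, invoke \cite{Fef2} for the vanishing order of $J(-\varphi^{(l)})-1$, localize via the sets $U_l$ and a collar $\bigl(B(q,R)\cap\partial\Omega\bigr)+B(0,r)$, apply Proposition~\ref{Grad>0} for the gradient bounds, and finally adjust by $\varphi^{(l)}\mapsto\varphi^{(l)}(1+t\varphi^{(l)})$ to force strict plurisubharmonicity. Your explicit flag that this last adjustment must be reconciled with property~(6) is a point the paper passes over in one line, so you are if anything slightly more careful; the only minor slip is that the positivity of the factor $1+\tfrac{1-J(-\varphi^{(l-1)})}{l(n+2-l)}$ is controlled on $U_{l-1}$ rather than $U_l$, but since you work on the intersection $\bigcap_{l=0}^{n+1}U_l$ this is harmless.
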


\begin{rem}$\bullet$ Especially, conditions $(1)$ to $(4)$ imply that for every integer $1\leq l \leq n+1$, the function $\varphi^{(l)}$ is a strictly plurisubharmonic defining function of $\partial \Omega \cap U$ of class $\mathcal{C}^{k-2l}$.
$\newline$ $\bullet$ If $k\geq 3n+5$, then all the functions $\displaystyle \varphi^{(l)},\frac{J\left(-\varphi^{(l)}\right)-1}{(-\varphi)^l},\frac{\varphi^{(l)}}{\varphi}$ and $\displaystyle \frac{Log\left(J\left(-\varphi^{(l)}\right)\right)}{(-\varphi)^l}$ belong to $\mathcal{C}^{n+1}\left(\overline{U}\right)$. If $k\geq 3n+6$, then all the aforementionned functions belong to $\mathcal{C}^{n+2}\left(\overline{U}\right)\subset \cap _{0\leq \delta \leq 1} \mathcal{C}^{n+1+\delta}\left(\overline{U}\right)$.
$\newline$ $\bullet$ The metrics $\left(-Log\left(-\varphi^{(l)}_{i\bar{j}}\right)\right)$ are called ``asymptotically Kähler-Einstein" on $\partial \Omega \cap \overline{U}$, since they satisfy the condition $J\left(-\varphi^{(l)}\right)(z)\underset{z \to \partial \Omega \cap \overline{U}}{\longrightarrow} 1$ (recall that $\left(-Log\left(-\varphi^{(l)}_{i\bar{j}}\right)\right)$ is Kähler-Einstein on $\Omega \cap \overline{U}$ if and only if $J\left(-\varphi^{(l)}\right)=1$ on $\Omega \cap \overline{U}$).
\end{rem}

\section{Local boundary regularity}

In this Section, we fix an integer $n\geq 2$, a domain $\Omega \subset \mathbb{C}^n$ and a point $q\in \partial \Omega$. 
We assume that $\Omega$ satisfies the hypothesis of Theorem \ref{CYLoc}. Namely, there exists a complete Kähler-Einstein metric induced by a potential $w'\in \mathcal{C}^\omega\left(\Omega\right)$ that satisfies conditions \mref{MAOri} and \mref{BVal}, and there exists a neighborhood $V$ of $q$ such that $\partial \Omega \cap V$ is strictly pseudoconvex and of class $\mathcal{C}^k$ with $k\geq \max\left(2n+9,3n+6\right)$. Thus, we can apply Proposition \ref{FeffAp}, and use the same notations introduced therein.
\\One of the main ideas to prove Theorem \ref{CYLoc} is to compare the complete Kähler-Einstein metric $\left(w'_{i\bar{j}}\right)$ to the aymptotically Kähler-Einstein metrics induced by the strictly plurisubharmonic defining functions $\left(\varphi^{(l)}\right)_{1\leq l \leq n+1}$ as follows.
\\Let $1\leq l \leq n+1$, and set
$$\eta:=\frac{\varphi^{(l)}}{\varphi}, \quad w:=-Log\left(-\varphi^{(l)}\right)=-Log\left(-\eta \varphi\right),\quad F:=Log\left(J\left(-\varphi^{(l)}\right)\right)=Log\left(J\left(-\eta \varphi\right)\right).$$
Then, according to points $(5),(6),(7),(8)$ of Proposition \ref{FeffAp}, $\eta\in \mathcal{C}^{k-2l}\left(\overline{U}\right)$, $w\in \mathcal{C}^{k-2l}\left(\Omega\cap U\right)$, $F\in \mathcal{C}^{k-2l-2}\left(\overline{U}\right)$, $\frac{F}{\left(-\varphi\right)^l}\in \mathcal{C}^{k-2l-2}\left(\overline{U}\right)$ and $w$ and $F$ are related on $\Omega \cap U$ by the condition
\begin{equation}
Det\left(w_{i\bar{j}}\right)=e^{(n+1)w}e^{F}.
\end{equation}
Let $u:=w'-w$. Then, on $\Omega \cap U$, $u$ solves the Monge-Ampère equation
\begin{equation}
\label{MA}
Det\left(w_{i\bar{j}}+u_{i\bar{j}}\right)=e^{(n+1)u-F}Det\left(w_{i\bar{j}}\right).
\end{equation}
Since $w'$ is real analytic in $\Omega$ and $w \in \mathcal{C}^{k-2l}\left(\Omega\cap U\right)$, then $u\in \mathcal{C}^{k-2l}\left(\Omega\cap U\right).$

So, for each integer $1\leq l \leq n+1$, we have an asymptotically Kähler-Einstein metric $\left(w_{i\bar{j}}\right)$ on $\partial\Omega \cap U$, for which the defect of being Kähler-Einstein is encoded in the function $F$, and we study the difference between this metric and the Kähler-Einstein metric $\left(w'_{i\bar{j}}\right)$ on $\Omega \cap \overline{U}$. More precisely, we study the boundary regularity of the difference of their potentials, namely the function $u$.

\subsection{$\mathcal{C}^1$ estimate and consequences}
Whether global (see \cite{CY}) or local (see \cite{Bla1}), the study of the boundary behavior of $u$ relies on its gradient estimate, which relies on the comparison between the metrics $\left(w'_{i\bar{j}}\right)$ and $\left(w_{i\bar{j}}\right)$ (see condition \mref{Lap}). The gradient estimate enables to deduce the boundary behavior of $u$, and then the boundary behavior of the higher order derivatives of $u$ by use of Schauder theory. All these estimates depend on the regularity of the gradient of $\displaystyle \frac{F}{\left(-\varphi\right)^l}$, for which we have the following result: 

\begin{prop}
\label{NabFOK}
Under the hypothesis of Theorem \ref{CYLoc}, and with the notations introduced at the beginning of Section 3, we have $\displaystyle \frac{\left \lvert \nabla F \right \rvert_w ^2}{(-\varphi)^{2l-1}} \in \mathcal{C}^{k-2l-3}\left(\overline{\Omega \cap U}\right)$. In particular, there exists a positive constant $c_\nabla$, such that the following holds on $\Omega \cap U$:
\begin{equation} 
\label{NabF}
\lvert \nabla F \rvert_w ^2 \leq c_\nabla (-\varphi)^{2l-1}.
\end{equation}
\end{prop}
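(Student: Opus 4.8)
The plan is to reduce the statement to controlling $\lvert\nabla F\rvert_w^2$ in terms of the already-understood quantities from Proposition~\ref{FeffAp}, and then to use the inverse-metric estimate \mref{EstInvEq} to handle the weight. Write $F = (-\varphi)^l \cdot G$, where $G := \frac{F}{(-\varphi)^l} = \frac{Log(J(-\varphi^{(l)}))}{(-\varphi)^l} \in \mathcal{C}^{k-2l-2}(\overline{U})$ by point~$(8)$ of Proposition~\ref{FeffAp}. Differentiating, $F_i = (-\varphi)^l G_i + l(-\varphi)^{l-1}(-\varphi_i) G$, so each first derivative $F_i$ is $O((-\varphi)^{l-1})$, with the $\mathcal{C}^{k-2l-3}(\overline{U})$-coefficient $\frac{F_i}{(-\varphi)^{l-1}} = (-\varphi)G_i - l\varphi_i G$. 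Hence $F_i F_{\bar{j}} = (-\varphi)^{2l-2} H_{i\bar{j}}$ where $H_{i\bar{j}} := \bigl((-\varphi)G_i - l\varphi_i G\bigr)\overline{\bigl((-\varphi)G_j - l\varphi_j G\bigr)} \in \mathcal{C}^{k-2l-3}(\overline{U})$.

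Next I would expand $\lvert\nabla F\rvert_w^2 = w^{i\bar{j}} F_i F_{\bar{j}}$ using formula \mref{-Log(-psi)InvEq} applied to $\psi = \varphi^{(l)} = \eta\varphi$ (recall $w = -Log(-\varphi^{(l)})$). By that formula,
$$
w^{i\bar{j}} = (-\varphi^{(l)})(\varphi^{(l)})^{i\bar{j}} - (-\varphi^{(l)})\frac{(\varphi^{(l)})^{i\bar{p}}\varphi^{(l)}_p\varphi^{(l)}_{\bar{q}}(\varphi^{(l)})^{q\bar{j}}}{-\varphi^{(l)} + \lvert\nabla\varphi^{(l)}\rvert^2_{\varphi^{(l)}}}.
$$
Since $\varphi^{(l)}$ is strictly plurisubharmonic on the compact $\overline{U}$, Proposition~\ref{PSHK} gives $\lambda I \le (\varphi^{(l)}_{i\bar{j}}) \le \Lambda I$ there, so $(\varphi^{(l)})^{i\bar{j}}$, $\lvert\nabla\varphi^{(l)}\rvert^2_{\varphi^{(l)}}$, and the denominator $-\varphi^{(l)} + \lvert\nabla\varphi^{(l)}\rvert^2_{\varphi^{(l)}}$ are all in $\mathcal{C}^{k-2l-2}(\overline{U})$ (using point~$(3)$ and point~$(4)$ of Proposition~\ref{FeffAp}, the denominator is bounded below by a positive constant near $\partial\Omega$, since $\lvert\nabla\varphi^{(l)}\rvert$ is bounded below). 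Therefore $\frac{w^{i\bar{j}}}{-\varphi^{(l)}} \in \mathcal{C}^{k-2l-3}(\overline{U})$. Combining with $F_iF_{\bar{j}} = (-\varphi)^{2l-2}H_{i\bar{j}}$ and $\frac{\varphi^{(l)}}{\varphi} = \eta \in \mathcal{C}^{k-2l}(\overline{U})$ positive (point~$(7)$), I get
$$
\frac{\lvert\nabla F\rvert_w^2}{(-\varphi)^{2l-1}} = \frac{w^{i\bar{j}}F_iF_{\bar{j}}}{(-\varphi)^{2l-1}} = \frac{w^{i\bar{j}}}{-\varphi^{(l)}}\cdot\frac{(-\varphi^{(l)})}{(-\varphi)}\cdot H_{i\bar{j}} = \eta\,\frac{w^{i\bar{j}}}{-\varphi^{(l)}}\,H_{i\bar{j}} \in \mathcal{C}^{k-2l-3}\bigl(\overline{\Omega\cap U}\bigr),
$$
which is the first assertion; since $\overline{\Omega\cap U}$ is compact the $\mathcal{C}^0$-bound \mref{NabF} follows with $c_\nabla$ the sup of this function.

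The routine parts are the Leibniz bookkeeping and the verification that each factor lands in the stated Hölder class; these are straightforward given Propositions~\ref{FeffAp} and~\ref{PSHK}. The one point requiring a little care — and the main potential obstacle — is confirming that the denominator $-\varphi^{(l)} + \lvert\nabla\varphi^{(l)}\rvert^2_{\varphi^{(l)}}$ stays bounded away from zero on all of $\overline{\Omega\cap U}$, not merely near $\partial\Omega$, so that dividing by it preserves regularity up to the boundary; this is exactly where $\inf_{\overline{U}}\lvert\nabla\varphi^{(l)}\rvert > 0$ from Proposition~\ref{FeffAp} (via Proposition~\ref{Grad>0}) together with the lower bound $\lambda I \le (\varphi^{(l)}_{i\bar{j}})$ is used, giving $\lvert\nabla\varphi^{(l)}\rvert^2_{\varphi^{(l)}} \ge \Lambda^{-1}\lvert\nabla\varphi^{(l)}\rvert^2 \ge$ a positive constant. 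Tracking the loss of derivatives, the worst regularity is $\mathcal{C}^{k-2l-3}$, attained since $G \in \mathcal{C}^{k-2l-2}$ and one more derivative is spent forming $\nabla F$; this matches the claim.
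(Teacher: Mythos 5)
Your argument is correct and follows essentially the same route as the paper: you factor $\lvert\nabla F\rvert_w^2/(-\varphi)^{2l-1}$ into $\frac{F_i}{(-\varphi)^{l-1}}$ (controlled via point $(8)$ of Proposition \ref{FeffAp}) and $\frac{w^{i\bar{j}}}{-\varphi}=\eta\,\frac{w^{i\bar{j}}}{-\varphi^{(l)}}$ (controlled via formula \mref{-Log(-psi)InvEq} and point $(7)$), exactly as in the paper's proof, and then conclude by compactness. Your extra remark on the denominator $-\varphi^{(l)}+\lvert\nabla\varphi^{(l)}\rvert^2_{\varphi^{(l)}}$ being bounded below on $\overline{\Omega\cap U}$ just makes explicit a point the paper leaves implicit.
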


\begin{proof}[Proof of Proposition \ref{NabFOK}]
Let $1\leq i,j \leq n$. Then, according to point $(8)$ of Proposition \ref{FeffAp}, $\displaystyle \frac{F_i}{(-\varphi)^{l-1}}=l\frac{F\varphi_i}{(-\varphi)^{l}}+\varphi\left(\frac{F}{(-\varphi)^l}\right)_i\in \mathcal{C}^{k-2l-3}\left(\overline{U}\right),$ and according to equation \mref{-Log(-psi)InvEq} as well as point $(7)$ of Proposition \ref{FeffAp}, 
$$\displaystyle \frac{w^{i\bar{j}}}{-\varphi}=\frac{\psi}{\varphi}\frac{w^{i\bar{j}}}{-\psi}=\frac{\psi}{\varphi}\left(\psi^{i\bar{j}}+\frac{\left(\left(\psi^{i\bar{j}}\right)\left(\psi_{i}\psi_{\bar{j}}\right)\left(\psi^{i\bar{j}}\right)\right)_{i j}}{-\psi + \lvert \nabla \psi \rvert_\psi ^2}\right)\in \mathcal{C}^{k-2l-2}\left(\overline{\Omega \cap U}\right),$$
 where $\displaystyle \psi:=\varphi^{(l)}$. 
\\Hence 
$\displaystyle \frac{\left \lvert \nabla F \right \rvert_w ^2}{(-\varphi)^{2l-1}}=\frac{w^{i\bar{j}}}{-\varphi}\frac{F_i}{(-\varphi)^{l-1}}\frac{F_{\bar{j}}}{(-\varphi)^{l-1}} \in \mathcal{C}^{k-2l-3}\left(\overline{\Omega \cap U}\right).$
\end{proof}

We improve the gradient estimate obtained in \cite{Bla1} by using the computations of \cite{CY} in a different way. Then we proceed exactly as in \cite{Bla1} to obtain the estimates of the other derivatives of $u$.

\begin{prop}
\label{ControlNab}
Under the hypothesis of Theorem \ref{CYLoc}, and with the notations indroduced at the beginning of Section 3 and in Proposition \ref{NabFOK}, for every $\gamma\in ]0;\min(2n+1,2l-1)[$, there exist positive constants $c$ and $\epsilon$ such that $\lvert \nabla u \rvert_w ^2 \leq c \left(-\varphi\right)^{\gamma}$ on $\Omega \cap U\cap \{\left\lvert \varphi \right \rvert < \epsilon\}$.
\end{prop}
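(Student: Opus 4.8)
The strategy is the classical Cheng–Yau gradient estimate, adapted to the local setting. The plan is to apply the maximum principle to a well-chosen auxiliary function of the form
$$
\Phi := \frac{\lvert \nabla u \rvert_w^2}{(-\varphi)^\gamma}
$$
(up to multiplying $\varphi^{(l)}$, $\varphi$ or a further auxiliary factor, so that we may use $w = -Log(-\varphi^{(l)})$ and its good properties on $\overline U$), restricted to a relatively compact subset $\Omega \cap U \cap \{\lvert \varphi \rvert < \epsilon\}$. Since $u = w' - w \to 0$ fast enough near $\partial\Omega$ by the $\mathcal C^0$-type bounds (and $\lvert \nabla u \rvert_w^2$ is bounded on the part of the boundary $\{\lvert\varphi\rvert = \epsilon\}$ that lies inside $\Omega$), the supremum of $\Phi$ is either controlled by its boundary values or attained at an interior point; in the latter case one exploits $\nabla \Phi = 0$ and $\Delta_w \Phi \leq 0$ there.

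The key computational step is the Bochner–Weitzenböck-type identity for $\Delta_w \lvert \nabla u \rvert_w^2$ applied in the Monge–Ampère setting. Differentiating equation \mref{MA} gives, after tracing, a relation of the shape
$$
\Delta_{w'}\, u_k = (n+1)\, u_k - F_k \quad\text{(schematically)},
$$
so that the Laplacian (with respect to the \emph{solution} metric $w' = w + u$) of $\lvert \nabla u \rvert_{w}^2$ — or, more conveniently, of $\lvert \nabla u\rvert_{w'}^2$ — produces a positive curvature-type term, a term $(n+1)\lvert \nabla u\rvert^2$ with favorable sign, and error terms involving $\nabla F$ and the Hessian of $w$. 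The crucial input is Proposition \ref{NabFOK}: the bound $\lvert \nabla F\rvert_w^2 \leq c_\nabla (-\varphi)^{2l-1}$ lets us absorb the $F$-contributions, and this is precisely why the exponent $\gamma$ is forced to satisfy $\gamma < 2l-1$; the other constraint $\gamma < 2n+1$ comes from comparing $(w'_{i\bar j})$ and $(w_{i\bar j})$ (a Laplacian comparison, the ``condition \mref{Lap}'' alluded to in the text), which controls how degenerate $w'$ can be relative to the model boundary behaviour $(-\varphi)^{-2}$ of $w_{i\bar j}$ tangentially and $(-\varphi)^{-1}$ normally. One then collects the terms: at an interior maximum of $\Phi$, the inequality $\Delta_w \Phi \leq 0$ combined with the Bochner term and the estimate $\Delta_w(-\varphi)^\gamma \sim -\gamma(\gamma - (2n+1))(-\varphi)^\gamma \cdot(\text{positive})$ (using that $-\varphi$ behaves like a bounded potential whose complex Hessian is comparable, via \mref{-Log(-psi)Eq}–\mref{-Log(-psi)InvEq} and Proposition \ref{EstInv}, to $(-\varphi)\,(w_{i\bar j})$ in the relevant directions) yields an algebraic inequality of the form $\Phi \cdot (\text{positive constant}) \leq (\text{const}) \cdot \Phi^{1/2} + \text{const}$, whence $\Phi$ is bounded.

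The main obstacle will be the bookkeeping in the Bochner computation: one must track, with respect to which metric ($w$ or $w' = w+u$) each Laplacian, gradient and curvature quantity is taken, and use the comparison between the two metrics to convert everything into quantities controlled on $\overline U$. In particular, the curvature term coming from the Bochner formula for $w'$ involves the full Ricci curvature of $w'$, which is $-(n+1)(w'_{i\bar j})$ by the Einstein condition — this is what gives the clean sign — but the remaining Hessian-of-$w$ terms must be estimated using the explicit form of $w = -Log(-\varphi^{(l)})$, Proposition \ref{EstInv}, and the strict plurisubharmonicity and gradient non-vanishing of $\varphi^{(l)}$ on $\overline U$ from Proposition \ref{FeffAp}. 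The sign of $\Delta_w(-\varphi)^\gamma$, and the precise range of $\gamma$ for which the maximum-principle argument closes, is where the two constraints $\gamma < 2n+1$ and $\gamma < 2l-1$ enter, and getting the constants to line up is the delicate point; everything else is routine once the identity is set up and the auxiliary function is correctly normalized so that it extends continuously to $\overline{\Omega \cap U}$ with the claimed vanishing of $u$ at the boundary.
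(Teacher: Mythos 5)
Your overall strategy---weight $\lvert\nabla u\rvert_w^2$ by a power of $-\varphi$, apply a maximum principle, and use Proposition \ref{NabFOK} to absorb the $\nabla F$ contribution (whence the constraint $\gamma<2l-1$)---is the same in spirit as the paper's, but there are two genuine gaps in how you propose to close the argument. First, the dichotomy ``the supremum of $\Phi$ is controlled by its boundary values or attained at an interior point'' is not available: the closure of $\Omega\cap U\cap\{\lvert\varphi\rvert<\epsilon\}$ meets $\partial\Omega$, where the behaviour of $\Phi=\lvert\nabla u\rvert_w^2/(-\varphi)^\gamma$ is exactly what is to be proved, and no a priori bound on $\Phi$ exists there. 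The only a priori information is that $\lvert\nabla u\rvert_w^2$ itself is bounded (Remark \ref{m}); the ``$\mathcal{C}^0$-type bound'' on $u$ you invoke is either the weak one of Remark \ref{Already}, which says nothing about $\Phi$, or the strong one of Proposition \ref{FirstCsq}, which is a \emph{consequence} of the present proposition, so using it would be circular. The paper instead works with the function $f=\lvert\nabla u\rvert_w^2/(-\varphi)^\beta-c(-\varphi)^\alpha$, proves the strict inequality $\Delta_{w'}f>\tfrac{n+1+n\beta-\beta^2}{2}\,f$ near $\partial\Omega\cap U$, and applies Bland's generalized maximum principle (Lemma \ref{LMAxP}) along an almost-maximizing sequence---a device valid for functions merely bounded above, with no maximum attained---to conclude $\sup f\le 0$; note also that the relevant Laplacian is $\Delta_{w'}$ (so the Einstein condition can be used), not $\Delta_w$.

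Second, and more structurally, a one-step argument with the full exponent $\gamma$ close to $\min(2n+1,2l-1)$ cannot close even if a suitable maximum principle were granted: with the single weight $(-\varphi)^\gamma$, the coefficient playing the role of $n+1+n\beta-\beta^2$ loses positivity as soon as $\gamma\ge n+1$, so the differential inequality no longer has the sign needed to conclude. The ceiling $2n+1$ in the paper comes from splitting the exponent as $\gamma=\alpha+\beta$ with the two separate constraints $\alpha<n$ (so that $-\Delta_{w'}(-\varphi)^\alpha>0$, inequality \mref{Lapalpha}) and $\beta<n+1$ (so that $n+1+n\beta-\beta^2>0$ in \mref{LapNabu}), combined with a bootstrap in three stages: first $\beta=0$, where boundedness of $\lvert\nabla u\rvert_w^2$ is known, then $\beta\approx n$, then $\beta\approx n+1$, each stage providing the boundedness-from-above hypothesis required to apply Lemma \ref{LMAxP} at the next stage. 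Your proposal has neither the two-parameter decomposition nor the iteration, and these are precisely the points where the argument would fail; your attribution of the $2n+1$ constraint to a metric comparison alone is therefore not accurate, although the comparison \mref{c_1} does enter the proofs of \mref{Lapalpha} and \mref{LapNabu}.
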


\begin{rem}
\label{m}

$\bullet$
Proposition \ref{ControlNab} improves the results obtained in \cite{Bla1} in the sense that $\partial\Omega \cap U$ is not required to be ``nice".
$\newline$ $\bullet$
Proposition \ref{ControlNab} is a local version of Proposition 6.4 in \cite{CY}.
$\newline$ $\bullet$ The proof of Proposition \ref{ControlNab} will use the fact that $\left \lvert \nabla u \right \rvert_w^2 \in \mathcal{C}^{2}\left(\Omega \cap U\right)$ and is bounded from above, which is true as long as $k\geq 2n+5$ (see page 297 of \cite{Bla1} for further details).
$\newline$ $\bullet$ It will also use the fact that Lemma II in \cite{Bla1} actually works for $\mathcal{C}^{2}$ functions that are bounded below (see Lemma \ref{LMAxP} for a version that fits to our situation).
\end{rem}

\begin{proof}[Proof of Proposition \ref{ControlNab}]
The strategy of the proof of Proposition 6.4 in \cite{CY} is first to show that there exists $\delta_0>0$ such that for every $0<\alpha<n$, $0\leq \beta <n+1$ and $0<\delta \leq \delta_0$ satisfying $\alpha+\beta+\delta \leq 2l-1$, there exist positive constants $\epsilon$ and $c$ such that the following inequality holds on $\Omega \cap \{\left \lvert \varphi \right \rvert \leq \epsilon\}$:
$$\Delta_{w'}\left(\frac{\lvert \nabla u \rvert_w ^2}{(-\varphi)^\beta}-c(-\varphi)^\alpha\right)>\frac{n+1+n\beta-\beta^2}{2} \left( \frac{\lvert \nabla u \rvert_w ^2}{(-\varphi)^\beta}-c(-\varphi)^\alpha \right),$$
and then to apply the generalized maximum principle and choose suitable constants $\alpha$ and $\beta$ to get the conclusion. 
\\In our case, we wish to follow the same strategy when we restrict our considerations to $\Omega \cap U$.
\\We focus our attention on explaining the necessary modifications in the proof of Proposition 6.4 in \cite{CY}, keeping in mind that we look for local estimates in a neighborhood of $\partial \Omega \cap U$. For that purpose, we first explain the dependence of the constants $c_1,\dots,c_9$ with respect to the local data in order to obtain conditions \mref{SPSH} and \mref{PourPMax}. Then we use formulas \mref{SPSH} and \mref{PourPMax} to complete the proof. For each constant, we refer precisely to the condition in \cite{CY} where it is defined.
\\In the sequel, $0<\alpha<n$ and $0\leq \beta <n+1$.
\\$\bullet$ We apply the first Proposition of page 297 in \cite{Bla1} to derive the existence of positive constants $\epsilon$ and $\delta_0$ such that we have the following on $\Omega \cap \overline{U} \cap \{\left \lvert \varphi \right \rvert \leq \epsilon\}$: 
\begin{equation}
\label{Lap}
\left(w'_{i\bar{j}}\right)=\left(1+O\left(\left( -\varphi \right)^{\delta_0} \right ) \right) \left(w_{i\bar{j}}\right),
\end{equation}
which means that there exists a positive constant $c'_1$ such that: 
$$ \left(1-c'_1\left(- \varphi \right) ^{\delta_0} \right)\left(w_{i\bar{j}}\right)\leq \left(w'_{i\bar{j}}\right) \leq \left(1+c'_1\left(- \varphi \right) ^{\delta_0} \right)\left(w_{i\bar{j}}\right).$$
Hence by inverting it we obtain:
$$\left(1+c'_1\left(- \varphi \right)^{\delta_0} \right)^{-1}\left(w^{i\bar{j}}\right)\leq \left(w'^{i\bar{j}}\right) \leq \left(1-c'_1\left(-\varphi \right) ^{\delta_0} \right)^{-1}\left(w^{i\bar{j}}\right).$$
Since $\displaystyle \frac{1}{1-x}=1+\frac{x}{1-x}\leq 1 +2x$ if $ x\in \left[0,\frac{1}{2}\right]$, we have $ \displaystyle  \frac{1}{1-c'_1\left(-\varphi \right ) ^{\delta_0}}\leq 1+2c'_1\left(-\varphi\right)^{\delta_0}$ on the set $\Omega \cap \overline{U} \cap \{\left \lvert \varphi \right \rvert \leq \epsilon\}$ whenever $\epsilon \leq \left(\frac{1}{2c'_1}\right)^\frac{1}{\delta_0}$. 
\\Moreover, since $\displaystyle \frac{1}{1+x}\geq 1-x\geq 1-2x$ for every $ x\in \left[0,\frac{1}{2}\right]$, we also have $\displaystyle  \frac{1}{1+c'_1\left(- \varphi \right )^{\delta_0}}\geq 1-2c'_1\left(-\varphi \right) ^{\delta_0}$ on $\Omega \cap \overline{U} \cap \{\left \lvert \varphi \right \rvert \leq \epsilon\}$. Thus, there exist positive constants $\epsilon$ and $c_1$ such that we have, on $\Omega \cap \overline{U} \cap \{\left \lvert \varphi \right \rvert \leq \epsilon\}$: 
$$\left(1-c_1\left(-\varphi \right)^{\delta_0} \right)\left(w^{i\bar{j}}\right)\leq \left(w'^{i\bar{j}}\right) \leq \left(1+c_1\left(-\varphi \right)^{\delta_0} \right)\left(w^{i\bar{j}}\right).$$
We also take $\epsilon \leq 1$ so that for every $\delta \geq 0$ we have $\lvert \varphi \rvert^\delta \leq 1$. Consequently, we deduce the existence of constants $\epsilon \in ]0,1]$, $\delta_0,c_1>0$ such that for every  $0\leq \delta \leq \delta_0$, we have the following on $\Omega \cap \overline{U} \cap \{\left \lvert \varphi \right \rvert \leq \epsilon\}$: 
\begin{equation}
\label{c_1}
\left(1-c_1\left \lvert \varphi \right \rvert ^{\delta} \right)\left(w^{i\bar{j}}\right)\leq \left(w'^{i\bar{j}}\right) \leq \left(1+c_1\left \lvert \varphi \right \rvert ^{\delta} \right)\left(w^{i\bar{j}}\right).
\end{equation}
This is the same as condition (6.18) in \cite{CY}, except that it holds in a neighborhood of $\partial \Omega \cap \overline{U}$ in our situation (in \cite{CY}, due to the global assumption of strict pseudoconvexity of $\partial \Omega$, the inequalities in \mref{c_1} are valid in a neighborhood of $\partial \Omega$).
\\From now on, we let $\delta \in ]0,\delta_0]$.
\\$\bullet$
The constant $c_2$ (see condition (6.19)) depends only on $c_1$.
\\$\bullet$ 
The constant $c_3$ (see conditions (6.22) and (6.23)) depends only on $c_1$. Especially we have the following on $\Omega \cap \overline{U} \cap \{\left \lvert \varphi \right \rvert \leq \epsilon\}$: 
$$1-c_3\left(-\varphi\right)^\delta \leq \frac{\lvert \nabla\varphi \rvert_{w'} ^2}{\varphi^2} \leq 1+c_3\left(-\varphi\right)^\delta .$$
In our situation we also assume that $\epsilon \leq \left(\frac{1}{2c_3}\right)^\frac{1}{\delta}$, so that we have the following on $\Omega \cap \overline{U} \cap \{\left \lvert \varphi \right \rvert \leq \epsilon\}$: 
\begin{equation}
\label{Nabphi}
\frac{1}{2}\leq 1-c_3\left(-\varphi\right)^\delta \leq \frac{\lvert \nabla\varphi \rvert_{w'} ^2}{\varphi^2} \leq 1+c_3\left(-\varphi\right)^\delta .
\end{equation}
\\$\bullet$ Set $c_4:=2nc_3$ (see condition (6.24)).
\\$\bullet$ According to inequality (6.25), we have, on $\Omega \cap \overline{U} \cap \{\left \lvert \varphi \right \rvert \leq \epsilon\}$: 
$$-\Delta_{w'}(-\varphi)^\alpha\geq \alpha(-\varphi)^\alpha\left[(n-\alpha)\frac{\lvert \nabla\varphi \rvert_{w'} ^2}{\varphi^2}-c_4(-\varphi)^\delta\right].$$
If we assume that $\epsilon< \left(\frac{n-\alpha}{5c_4}\right)^\frac{1}{\delta}$, then we derive the inequality $\frac{(n-\alpha)}{2}\lvert \nabla\varphi \rvert_{w'} ^2-c_4(-\varphi)^{\delta+2}>0$ on $\Omega \cap \overline{U} \cap \{\left \lvert \varphi \right \rvert \leq \epsilon\}$, which leads to the following: 

\begin{equation}
\label{Lapalpha}
-\Delta_{w'}(-\varphi)^\alpha > \frac{\alpha(n-\alpha)}{2}\frac{\lvert \nabla\varphi \rvert_{w'} ^2}{\varphi^2}(-\varphi)^\alpha.
\end{equation}
This is the same as inequality (6.26) in \cite{CY}, but with $c_5=0$.
\\$\bullet$ Set $c_6:=\beta c_4+c_2$ (see condition (6.28)). 
\\$\bullet$ The constant $c_7$ depends only on $c_6$ (see condition (6.29)).
\\ $\bullet$ The constant $c_8$ depends only on $c_3$ and $c_7$ (see condition (6.30)).
\\ $\bullet$ If $\epsilon <\left(\frac{n+1+n\beta -\beta^2}{2c_8}\right)^\frac{1}{\delta}$, then we have, on $\Omega \cap \overline{U} \cap \{\left \lvert \varphi \right \rvert \leq \epsilon\}$: 
$$\frac{n+1+n\beta-\beta^2}{2}-c_8(-\varphi)^\delta >0,$$
so that in our case inequality (6.31) becomes the following:  
\begin{equation}
\label{LapNabu}
\Delta_{w'}\left(\frac{\lvert \nabla u \rvert_w ^2}{(-\varphi)^\beta}\right)>\frac{n+1+n\beta-\beta^2}{2}\frac{\lvert \nabla u \rvert_w ^2}{(-\varphi)^\beta}-\lvert \nabla F \rvert_w ^2 (-\varphi)^{-(\delta + \beta)}.
\end{equation}

Combining \mref{Lapalpha} and \mref{LapNabu}, we obtain, on $\Omega \cap \overline{U} \cap \{\left \lvert \varphi \right \rvert \leq \epsilon\}$ and for every $c>0$: 
$$\Delta_{w'}\left(\frac{\lvert \nabla u \rvert_w ^2}{(-\varphi)^\beta}-c(-\varphi)^\alpha\right)>\frac{n+1+n\beta-\beta^2}{2}\frac{\lvert \nabla u \rvert_w ^2}{(-\varphi)^\beta}-\lvert \nabla F \rvert_w ^2 (-\varphi)^{-(\delta + \beta)} +c\frac{\alpha(n-\alpha)}{2}\frac{\lvert \nabla\varphi \rvert_{w'} ^2}{\varphi^2}(-\varphi)^\alpha.$$
This is exactly the same as inequality (6.31) in \cite{CY}, but with $c_9=0$.
\\ $\bullet$ Using condition \mref{NabF} (Proposition \ref{NabFOK}), we observe that $\lvert \nabla F \rvert_w ^2 \leq c_\nabla (-\varphi)^{\alpha+\delta +\beta}$ whenever $\left\lvert\varphi\right \rvert \leq 1$ and $\alpha + \delta + \beta \leq 2l-1$. 
Therefore, according to \mref{Nabphi}, the following holds on $\Omega \cap \overline{U} \cap \{\left \lvert \varphi \right \rvert \leq \epsilon\}$: 
\begin{align*}
\displaystyle -\lvert \nabla F \rvert_w ^2 (-\varphi)^{-(\delta + \beta)} +c\frac{\alpha(n-\alpha)}{2}\frac{\lvert \nabla\varphi \rvert_{w'} ^2}{\varphi^2}(-\varphi)^\alpha &\geq -c_\nabla (-\varphi)^{\alpha} +c\frac{\alpha(n-\alpha)}{2}\frac{\lvert \nabla\varphi \rvert_{w'} ^2}{\varphi^2}(-\varphi)^\alpha 
\\&\geq \left(-c_\nabla +c\frac{\alpha(n-\alpha)}{4}\right)(-\varphi)^\alpha.
\end{align*}
In particular if we take $c>\frac{4c_\nabla}{\alpha(n-\alpha)}$ the right-hand side is non-negative.
This is exactly what is derived from relation (6.32) in \cite{CY} (see the explanation below relation (6.33) in \cite{CY}), except that in our case it holds on $\Omega \cap \overline{U} \cap \{\left \lvert \varphi \right \rvert \leq \epsilon\}$.

For short, we have proved that there exists $\delta_0>0$ such that for every $0<\alpha<n$, $0\leq \beta <n+1$ and $0<\delta \leq \delta_0$ satisfying $\alpha+\beta+\delta \leq 2l-1$, there exist $\epsilon\in ]0,1]$ and $c>0$ such that the following inequalities hold on $\Omega \cap \overline{U} \cap \{\left \lvert \varphi \right \rvert \leq \epsilon\}$: 

\begin{equation}
\label{SPSH}
\Delta_{w'}\left(\frac{\lvert \nabla u \rvert_w ^2}{(-\varphi)^\beta}-c(-\varphi)^\alpha\right)>0,
\end{equation}

\begin{equation}
\label{PourPMax}
\Delta_{w'}\left(\frac{\lvert \nabla u \rvert_w ^2}{(-\varphi)^\beta}-c(-\varphi)^\alpha\right)>\frac{n+1+n\beta-\beta^2}{2} \left( \frac{\lvert \nabla u \rvert_w ^2}{(-\varphi)^\beta}-c(-\varphi)^\alpha \right).
\end{equation}

Inequality \mref{SPSH} implies that the function $f:=\frac{\lvert \nabla u\rvert_w ^2}{(-\varphi)^\beta}-c(-\varphi)^\alpha$ cannot achieve its maximum on $\Omega\cap \overline{U}\cap \{ \left \lvert \varphi \right \rvert \leq \epsilon\}$, provided it is bounded from above on the set $D_{\epsilon}:= \Omega \cap U\cap \{ \left\lvert\varphi\right\rvert < \epsilon \}$. Hence we can find a sequence $\left(z_i\right)_{i\in \mathbb{N}} \in D_\epsilon^\mathbb{N}$ such that $\displaystyle \lim_{i\to +\infty}f\left(z'_i\right)=\sup_{D_\epsilon} f$ and $d_{w'}\left(z_i,\partial D_\epsilon\right)\underset{z \to + \infty}{\longrightarrow} +\infty$. Note that this implies that there exists a positive number $R$ and an integer $i_0\in \mathbb{N}$ such that for every $i\geq i_0$ we have $d_{w'}\left(z_i,\partial D_\epsilon \right)\geq R$. 
\\The last step to conclude is to apply the local maximum principle due to J. Bland (see Lemma II in \cite{Bla1}) and use inequation \mref{PourPMax}. For completeness, we recall the local maximum principle in a version that fits our situation: 

\begin{lem}
\label{LMAxP}
Let $\Omega \subset \mathbb{C}^n$ be a domain. Assume that there exists a Kähler-Einstein metric induced by a potential $w'$ on $\Omega$. Let $D\subset \Omega$ be a domain. Let $f\in \mathcal{C}^{2}\left(D\right)$ bounded from above. If there exists a sequence $\left(z_i\right)_{i\in \mathbb{N}}\in D^\mathbb{N}$ such that $\displaystyle \lim_{i\to +\infty}f\left(z'_i\right)=\sup_D f$ and there exists $R>0$ such that for every integer $i$, $d_{w'}\left(z_i,\partial D\right)\geq R$, then there exists an other sequence $\left(z'_i\right)_{i\in \mathbb{N}}\in D^\mathbb{N}$ such that 
$$\lim_{i\to +\infty}f\left(z'_i\right)=\sup_D f,\quad \limsup_{i \to + \infty} \Delta_{w'}f(z'_i)\leq 0.$$
\end{lem}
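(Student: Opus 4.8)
The plan is to recognise Lemma~\ref{LMAxP} as a \emph{localised} version of the generalised maximum principle of Omori and Yau --- the same one exploited by Cheng and Yau in \cite{CY} --- and to reproduce its proof with the intrinsic distance balls of the Kähler--Einstein metric playing the role of the exhaustion by a complete manifold. Two structural facts will be used: the metric $\left(w'_{i\bar{j}}\right)$ is complete (this holds in the setting in which the lemma is applied in Section~3), so that closed $d_{w'}$-balls contained in $\Omega$ are compact; and the equation $\left(Ric(w')_{i\bar{j}}\right)=-(n+1)\left(w'_{i\bar{j}}\right)$ says that the Ricci tensor of $\left(w'_{i\bar{j}}\right)$, viewed as a Riemannian metric, equals $-(n+1)$ times the metric, in particular is bounded from below, so that the Laplacian comparison theorem applies with constants depending only on $n$.

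First I would fix $R>0$ as in the statement, write $M:=\sup_D f<+\infty$, and for each $i$ set $p_i:=z_i$, $\delta_i:=M-f(p_i)\to 0$, and $B_i:=B_{w'}(p_i,R)$. A first-exit argument along minimal geodesics (available by completeness) shows $B_i\subset D$, and $\overline{B_{w'}(p_i,R')}$ is a compact subset of $D$ for every $R'<R$; in particular $\sup_{B_i}f\geq f(p_i)\to M$. On $B_i$ let $\rho_i:=d_{w'}(\cdot,p_i)$ and consider, for a parameter $\varepsilon_i>0$ to be chosen, the perturbed function
\[
 h_i:=f-\frac{\varepsilon_i}{R^2-\rho_i^2}\qquad\text{on }B_i.
\]
Since $f$ is bounded above and the subtracted term tends to $+\infty$ as $\rho_i\to R$, and $B_i$ is relatively compact in $D$, the supremum of $h_i$ over $B_i$ is finite and is attained at an interior point $x_i$. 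Comparing $h_i(x_i)\geq h_i(p_i)=f(p_i)-\varepsilon_i/R^2$ with $f\leq M$ gives $\varepsilon_i/(R^2-\rho_i(x_i)^2)\leq \delta_i+\varepsilon_i/R^2$, so if $\varepsilon_i$ is chosen with $\delta_i/\varepsilon_i\to 0$ (for instance $\varepsilon_i:=\sqrt{\delta_i}$ once $\delta_i\leq 1$), then $\rho_i(x_i)\leq R-\delta'$ for some fixed $\delta'>0$ and all large $i$.

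At the interior maximum one has $\Delta_{w'}h_i(x_i)\leq 0$, and I would read this in the barrier sense of Calabi to cover the case where $x_i$ lies on the cut locus of $p_i$: replacing $\rho_i$ near $x_i$ by $\tau+d_{w'}(\cdot,\gamma_i(\tau))$, with $\gamma_i$ a minimal geodesic from $p_i$ to $x_i$ and $\tau\to 0^+$, produces a smooth upper support function carrying the same Laplacian comparison bound. A short computation --- using that $\rho_i$ is a distance function for $\left(w'_{i\bar{j}}\right)$ (constant gradient length; $\Delta_{w'}\rho_i$ bounded for $\rho_i$ bounded away from $0$ and blowing up at worst like $c_n/\rho_i$ as $\rho_i\to 0$, by the comparison theorem) --- then bounds $\Delta_{w'}\!\left((R^2-\rho_i^2)^{-1}\right)$ at $x_i$ by a constant $C=C(n,R,\delta')$ independent of $i$, the only potentially singular contribution being absorbed because the derivative of $t\mapsto (R^2-t^2)^{-1}$ vanishes at $t=0$. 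Hence $\Delta_{w'}f(x_i)\leq \varepsilon_i C$, while $f(x_i)\geq h_i(x_i)\geq f(p_i)-\varepsilon_i/R^2\to M$. Setting $z'_i:=x_i$ then gives $f(z'_i)\to \sup_D f$ and $\limsup_{i\to +\infty}\Delta_{w'}f(z'_i)\leq 0$, which is the conclusion.

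The main obstacle is the usual tension in arguments of this kind: to invoke the Laplacian comparison theorem one must control $\Delta_{w'}\rho_i$ even though the distance function is only Lipschitz --- handled by Calabi's upper-barrier trick --- while at the same time the maximum point $x_i$ of the perturbed function must remain well inside $B_i$, where the barrier $(R^2-\rho_i^2)^{-1}$, and hence the estimate, do not degenerate. It is exactly the uniform lower bound $d_{w'}(z_i,\partial D)\geq R$, together with the coupling $\delta_i/\varepsilon_i\to 0$, that makes every constant uniform in $i$ and keeps $x_i$ away from $\partial B_i$.
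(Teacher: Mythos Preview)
The paper does not give its own proof of Lemma~\ref{LMAxP}; it merely states the lemma as a variant of Lemma~II in Bland~\cite{Bla1} adapted to the present setting, and then immediately applies it. Your proposal is therefore not competing with a proof in the paper but supplying one, and it is correct: it is the standard Omori--Yau barrier argument, localised to the geodesic balls $B_{w'}(z_i,R)$ that the hypothesis $d_{w'}(z_i,\partial D)\geq R$ makes available. The choice of $\varepsilon_i\to 0$ with $\delta_i/\varepsilon_i\to 0$ indeed yields $R^2-\rho_i(x_i)^2\geq R^2/(1+R^2\delta_i/\varepsilon_i)\to R^2$, so the maximum point $x_i$ stays in a fixed compact portion of $B_i$; there the Laplacian comparison theorem (valid because the Einstein condition gives Ricci $\equiv -(n+1)$, hence bounded below) controls $\Delta_{w'}\bigl((R^2-\rho_i^2)^{-1}\bigr)$ by a constant depending only on $n$ and $R$, the apparent singularity at $\rho_i=0$ being harmless since the barrier factors through the smooth function $\rho_i^2$. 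The Calabi upper-barrier trick for the cut locus is exactly what is needed.

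Two minor points worth making explicit in a final write-up: completeness of $\left(w'_{i\bar j}\right)$ is not listed among the hypotheses of the lemma but is used (for Hopf--Rinow and the existence of minimal geodesics) and is part of the standing assumptions of Section~3; and the operator $\Delta_{w'}=w'^{i\bar j}\partial_i\partial_{\bar j}$ is the complex Laplacian, a fixed dimensional multiple of the Riemannian one, so the comparison estimate transfers with an adjusted constant.
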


We apply Lemma \ref{LMAxP} to $\displaystyle f=\frac{\lvert \nabla u \rvert_w ^2}{(-\varphi)^\beta}-c(-\varphi)^\alpha$ with $D=D_\epsilon$ and choose the suitable constants $\alpha,\beta,\delta$ to conclude. We may argue as follows.  

\begin{enumerate}[wide=0pt]
\item
If $2n+1\leq 2l-1$, we first apply Lemma \ref{LMAxP} with $\beta=0$, $\alpha =n-\frac{\delta}{4}$ and $\delta \in \left]0,\min\left(\delta_0,4n\right)\right[$ to deduce the existence of constants $\epsilon \in ]0,1]$ and $c>0$ for which we have $\lvert \nabla u \rvert_w ^2-c(-\varphi)^{n-\frac{\delta}{4}}\leq 0$ on $\Omega \cap D_{\epsilon}$. Since $\left(-\varphi\right)<\epsilon \leq 1$ on $\Omega \cap D_\epsilon$, this directly implies: $\lvert \nabla u \rvert_w ^2-c(-\varphi)^{n-\frac{\delta}{2}}\leq 0$ on $\Omega \cap D_{\epsilon}$.
\item 
Hence we may apply Lemma \ref{LMAxP} with $\alpha =\beta = n -\frac{\delta}{2}$ and $\delta \in \left]0,\min\left(\delta_0,2n\right)\right[$ to deduce the existence of constants $\epsilon \in ]0,1]$ and $c>0$ for which $\displaystyle \frac{\lvert \nabla u \rvert_w ^2}{(-\varphi)^{n -\frac{\delta}{2}}}-c(-\varphi)^{n -\frac{\delta}{2}}\leq 0$ on $\Omega \cap D_{\epsilon}$. Again, since $\left(-\varphi\right)<\epsilon \leq 1$ on $\Omega \cap D_\epsilon$, this directly implies: $\lvert \nabla u \rvert_w ^2-c(-\varphi)^{n+1-\frac{\delta}{2}}\leq 0$ on $\Omega \cap D_{\epsilon}$. 
\item Hence we may apply once more Lemma \ref{LMAxP} with $\beta=\alpha+1=n+1-\frac{\delta}{2}$ and $\delta \in \left]0,\min\left(\delta_0,2n\right)\right[$ to deduce the existence of $c,\epsilon>0$ for which $\displaystyle \frac{\lvert \nabla u \rvert_w ^2}{(-\varphi)^{n+1 -\frac{\delta}{2}}}-c(-\varphi)^{n -\frac{\delta}{2}}\leq 0$ on $\Omega \cap D_{\epsilon}$. Finally, we directly deduce that $\lvert \nabla u \rvert_w ^2\leq c(-\varphi)^{2n+1-\delta}$ on $\Omega\cap D_\epsilon$.
\item If $2l-1<2n+1$, we can proceed likewise: first taking $\displaystyle \beta=0,\alpha=\min\left(n,l-\frac{1}{2}\right)-\frac{\delta}{8}$ with $\delta \in \left]0,\min\left(\delta_0,8\min\left(n,l-\frac{1}{2}\right)\right)\right[$, then considering $\displaystyle \alpha=\beta=\min\left(n,l-\frac{1}{2}\right)-\frac{\delta}{4}$ with $\delta \in \left]0,\min \left(\delta_0,4\min\left(n,l-\frac{1}{2}\right)\right)\right[$, and finally taking $\displaystyle \alpha=\beta =l-\frac{1}{2}-\frac{\delta}{2}$ with $\delta \in \left]0,\min \left(\delta_0,2l-1\right)\right[$.
\end{enumerate}
In both cases, we obtain the desired conclusion by letting $\delta$ tend to 0. Hence the result.
\end{proof}

In the rest of Subsection 3.1, we use Proposition \ref{ControlNab} first to derive the estimates of $u$ of order $0$ (Proposition \ref{FirstCsq}), second to derive estimates of higher order (Proposition \ref{Dpu}), and finally to obtain a regularity result for $\varphi e^{-u}$ (Proposition \ref{Reg}).

\begin{prop}
\label{FirstCsq}
Under the hypothesis and notations of Proposition \ref{ControlNab}, we have: 
\begin{enumerate}
\item 
\label{ByProduct}
For every $\gamma\in ]0,\min\left(2n+1,2l-1\right)[$, there exist positive constants $\epsilon$ and $c$ such that $\left \lvert \nabla u \right \rvert \leq c \left(-\varphi \right)^{\frac{\gamma}{2}-1}$ on the set $\Omega \cap U\cap \{ \lvert \varphi \rvert  < \epsilon \}$. In particular, if $\gamma >2$, one has $u\in \mathcal{C}^1\left(\overline{\Omega \cap U}\right)$. 

\item 
\label{CYLocBis}
For every $z \in \partial \Omega \cap U$, $\left\lvert \nabla e^{-w'} \right\rvert_z\neq 0$.

\item 
\label{utophi}
For every $\gamma \in ]0,\min(2n+1,2l-1)[$ there exist positive constants $c$ and $\epsilon$ such that $\left\lvert u \right \rvert\leq c \left(- \varphi \right)^\frac{\gamma}{2}$ on $ \Omega\cap U \cap \{\left \lvert \varphi \right \rvert <\epsilon \}$.
\end{enumerate}
\end{prop}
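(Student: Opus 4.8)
\textbf{Proof plan for Proposition \ref{FirstCsq}.}

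The three statements all follow from the gradient estimate $\lvert \nabla u \rvert_w^2 \leq c(-\varphi)^\gamma$ of Proposition \ref{ControlNab} together with the inverse-metric bounds of Proposition \ref{EstInv} applied to $\psi := \varphi^{(l)}$ (recall $w = -Log(-\varphi^{(l)})$).

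\emph{Step 1 (Euclidean gradient estimate, item \ref{ByProduct}).} I would first pass from the $w$-norm to the Euclidean norm of $\nabla u$. By the right-hand inequality in \mref{EstInvEq}, we have $\left(w^{i\bar{j}}\right) \leq \Lambda(-\varphi^{(l)})I$ on $\overline{\Omega \cap U}$; since $\varphi^{(l)}/\varphi$ is bounded above and below by positive constants on $\overline{U}$ (point $(7)$ of Proposition \ref{FeffAp}), this gives $\left(w^{i\bar{j}}\right) \leq \Lambda'(-\varphi)I$ for a new constant. Hence $\lvert \nabla u \rvert^2 = \sum_i \lvert u_i \rvert^2$, and using that $\left(w_{i\bar{j}}\right) \geq (\Lambda'(-\varphi))^{-1}I$, i.e. inverting, one gets $\lvert \nabla u \rvert^2 \leq \Lambda'(-\varphi)^{-1} \cdot g^{i\bar{j}}u_i u_{\bar{j}}$-type bound; more directly, $\lvert \nabla u \rvert^2 \leq \Lambda'(-\varphi)^{-1}\lvert \nabla u \rvert_w^2 \leq c\Lambda'(-\varphi)^{\gamma-1}$, so $\lvert \nabla u \rvert \leq c'(-\varphi)^{\frac{\gamma}{2}-\frac12}$. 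I should double-check whether the stated exponent is $\frac{\gamma}{2}-1$ or $\frac{\gamma}{2}-\frac12$; if the former is intended, one uses instead that $(-\varphi) \simeq \operatorname{dist}(\cdot,\partial\Omega)$ and the left inequality in \mref{EstInvEq} to get a factor $(-\varphi)^{-2}$ rather than $(-\varphi)^{-1}$, yielding $\lvert \nabla u\rvert \le c'(-\varphi)^{\frac\gamma2-1}$. When $\gamma > 2$ this exponent is positive, so $\nabla u$ extends continuously (by $0$) to $\partial\Omega\cap U$; combined with the fact that $u$ is bounded (Remark \ref{m}, third bullet, and the $\mathcal{C}^0$ estimate from \cite{Bla1,CY}) this gives $u \in \mathcal{C}^1(\overline{\Omega\cap U})$.

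\emph{Step 2 (item \ref{CYLocBis}).} Since $e^{-w'} = e^{-u}e^{-w} = e^{-u}(-\varphi^{(l)})$ and, on $\partial\Omega\cap U$, $\varphi^{(l)} = 0$ with $\lvert\nabla\varphi^{(l)}\rvert \neq 0$ (point $(3)$ of Proposition \ref{FeffAp}), the product rule gives, at $z \in \partial\Omega\cap U$, $\nabla(e^{-w'})_z = e^{-u(z)}\nabla(-\varphi^{(l)})_z$ provided $u$ and hence $e^{-u}$ extends $\mathcal{C}^1$ up to the boundary — which requires $\gamma > 2$, available since $l \geq 2$ gives $2l-1 \geq 3$ and $2n+1 \geq 5$, so $\min(2n+1,2l-1) > 2$. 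The boundary term $(-\varphi^{(l)})\nabla(e^{-u})$ vanishes on $\partial\Omega$ because $\varphi^{(l)} = 0$ there (even though $\nabla e^{-u}$ need not vanish). Since $e^{-u(z)} > 0$ and $\lvert\nabla\varphi^{(l)}\rvert_z > 0$, we get $\lvert\nabla e^{-w'}\rvert_z \neq 0$.

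\emph{Step 3 (item \ref{utophi}).} Fix $z \in \Omega\cap U$ near the boundary, let $z_0 \in \partial\Omega$ be a nearest boundary point (so $\lvert z - z_0\rvert \simeq (-\varphi(z))$), and integrate $\nabla u$ along the segment $[z_0,z]$. Using $\lvert\nabla u\rvert \leq c(-\varphi)^{\frac\gamma2-1}$ from Step 1 and $(-\varphi(z_t)) \simeq t\lvert z-z_0\rvert$ along the segment, one gets $\lvert u(z) - u(z_0)\rvert = \lvert u(z)\rvert \leq \int_0^{\lvert z-z_0\rvert} c(-\varphi)^{\frac\gamma2-1}\, dt \lesssim (-\varphi(z))^{\frac\gamma2}$ (here $u(z_0) = 0$ by the $\mathcal{C}^0$ estimate, since $w' - w \to 0$ at the boundary, or because $u$ is bounded and we may subtract its boundary values — actually one should argue $u \to 0$ from the Cheng–Yau/Bland $\mathcal{C}^0$ comparison). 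The exponent $\frac\gamma2-1 > -1$ for all $\gamma > 0$, so the integral converges, which is what makes this work.

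\emph{Main obstacle.} The delicate point is bookkeeping the exact power of $(-\varphi)$ lost when converting $\lvert\cdot\rvert_w$ to $\lvert\cdot\rvert$ via Proposition \ref{EstInv}: the inverse metric $\left(w^{i\bar{j}}\right)$ is squeezed between $(-\varphi)^2$-order and $(-\varphi)^1$-order quantities, and which side one uses determines whether the final exponent in item \ref{ByProduct} is $\frac\gamma2-1$ or $\frac\gamma2-\frac12$; one must also be careful that $\lvert\nabla\psi\rvert^2$ appearing in \mref{EstInvEq} is bounded above and below (Propositions \ref{Grad>0} and \ref{PSHK}), so that $-\psi + \lvert\nabla\psi\rvert^2 \simeq 1$ near $\partial\Omega$ and the lower bound in \mref{EstInvEq} is genuinely of order $(-\varphi)^2$. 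Everything else is the routine segment-integration and product-rule arguments sketched above.
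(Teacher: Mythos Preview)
Your overall plan matches the paper's proof, but there is a real error in Step~1 that you flag as uncertainty without resolving correctly.

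Recall $\lvert \nabla u\rvert_w^2 = w^{i\bar{j}} u_i u_{\bar{j}}$, the quadratic form of $(w^{i\bar{j}})$ applied to the vector $(u_i)$. An upper bound $(w^{i\bar{j}}) \leq \Lambda'(-\varphi)I$ therefore yields $\lvert\nabla u\rvert_w^2 \leq \Lambda'(-\varphi)\lvert\nabla u\rvert^2$, i.e.\ a \emph{lower} bound on $\lvert\nabla u\rvert^2$ --- useless here. Your asserted inequality $\lvert\nabla u\rvert^2 \leq \Lambda'(-\varphi)^{-1}\lvert\nabla u\rvert_w^2$ has the wrong direction, and the exponent $\frac{\gamma}{2}-\frac{1}{2}$ is simply not attainable this way. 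Only the \emph{lower} bound in \mref{EstInvEq}, namely $(w^{i\bar{j}}) \geq \lambda\frac{\psi^2}{-\psi + \lvert\nabla\psi\rvert^2}I$, produces an upper bound:
\[
\lvert\nabla u\rvert^2 \;\leq\; \frac{-\psi + \lvert\nabla\psi\rvert^2}{\lambda\,\psi^2}\,\lvert\nabla u\rvert_w^2 \;\leq\; c'(-\varphi)^{\gamma - 2},
\]
using that $-\psi + \lvert\nabla\psi\rvert^2$ is bounded on $\overline{U}$ and $\psi/\varphi = \eta$ is bounded above and below by positive constants. So the exponent in item~(\ref{ByProduct}) is necessarily $\frac{\gamma}{2}-1$; contrary to your ``Main obstacle'' paragraph, this is not a choice between the two sides of \mref{EstInvEq} yielding different exponents --- only one side gives a usable inequality. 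This is exactly the paper's computation.

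For item~(\ref{CYLocBis}), your argument needs $\gamma > 2$, hence $l \geq 2$; since the claim concerns $w'$ alone and is independent of $l$, one is free to \emph{choose} $l$, and the paper explicitly takes $l = n+1$. You should make this choice explicit rather than writing ``since $l \geq 2$''. For item~(\ref{utophi}) your segment-integration argument is the paper's; the boundary value $u(z_0)=0$ comes from the prior $\mathcal{C}^0$ estimate $u = O\left((-\varphi)^{\delta_0}\right)$ recorded in Remark~\ref{Already}, as you indicate.
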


\begin{rem}
\label{Already}
\begin{itemize}[wide=0pt]
\item Observe that relation \mref{Lap} already gives a control on $u$. Indeed, by applying $Log\circ Det$ on both sides, using equation \mref{MA}, and simplifying both sides, we may successively obtain, on $\Omega\cap \overline{U} \cap \{\left \lvert \varphi \right \rvert \leq \epsilon \}$:  
\begin{align*}
&e^{(n+1)u-F}Det\left(w_{i\bar{j}}\right)=\left(1+O\left(\left \lvert \varphi \right \rvert^{\delta_0}\right) \right)^n Det\left(w_{i\bar{j}}\right),
\\&u=\frac{n}{n+1}Log\left(1+O\left(\left \lvert \varphi \right \rvert^{\delta_0}\right) \right)+\frac{F}{n+1}.
\end{align*}
Thus, part $(3)$ of Proposition \ref{FirstCsq} only improves the exponent $\delta_0$.
\item Part $(3)$ of Proposition \ref{FirstCsq} is exactly as in \cite{Bla1}, the only difference being that we have it for every $\gamma \in \left]0,\min\left(2n+1,2l-1\right)\right[$. We prove it a slightly different way by first proving part $(\ref{ByProduct})$ of Proposition \ref{FirstCsq}.
\end{itemize}
\end{rem}

\begin{proof}[Proof of Proposition \ref{FirstCsq}]
\begin{enumerate}[wide=0pt]
\item
We apply Proposition \ref{ControlNab}, and use Proposition \ref{EstInv} with $\psi = \eta \varphi$, $g=w$ and $U$ replaced with $U \cap \{ \lvert \varphi \rvert < \epsilon \}$. With notations of Propositions \ref{ControlNab} and \ref{EstInv}, we have $\frac{c}{\lambda}>0$. Moreover we know that $-\psi+\lvert \nabla \psi \rvert_\psi^2,\left(\frac{1}{\eta}\right)^2\in \mathcal{C}\left(\overline{U}\right)$ and are positive functions. Hence they are bounded from above, so that there exist positive constants $M_1,M_2$ such that $-\psi+\lvert \nabla \psi \rvert_\psi^2\leq M_1$ and $\left(\frac{1}{\eta}\right)^2\leq M_2$ on $\overline{U}$. Thus, we have the following on $\Omega \cap \overline{U}$: 
\begin{align*}
\left \lvert \nabla u \right \rvert ^2 &\leq \frac{1}{\lambda}\frac{-\psi+\lvert \nabla \psi \rvert_\psi^2}{\psi^2}\left \lvert \nabla u \right \rvert_g ^2\leq \frac{c}{\lambda}(-\psi+\lvert \nabla \psi \rvert_\psi^2)\left(\frac{\varphi}{\psi}\right)^2\left(-\varphi\right)^{\gamma -2},
\\&=\frac{c}{\lambda}(-\psi+\lvert \nabla \psi \rvert_\psi^2)\left(\frac{1}{\eta}\right)^2\left(-\varphi\right)^{\gamma -2},
\\&\leq \frac{c}{\lambda}M_1M_2\left(-\varphi\right)^{\gamma -2}.
\end{align*}
Therefore we obtain the conclusion by setting $c'=\sqrt{\frac{c}{\lambda}M_1M_2}$. Especially, if $\gamma>2$, then all the derivatives of $u$ of order 1 extend continuously to $\overline{\Omega \cap U}$ (and equal 0 on $\partial \Omega \cap \overline{U}$), hence $u\in \mathcal{C}^1\left(\overline{\Omega \cap U}\right)$. 

\item 
To prove part $(\ref{CYLocBis})$ of Proposition \ref{FirstCsq}, we let $l=n+1$. Then by construction $e^{-w'}=-\varphi^{(n+1)}e^{-u}$. Moreover, according to point $(1)$ of Proposition \ref{FeffAp} and to point $(1)$ of Proposition \ref{FirstCsq}, we have $\varphi^{(n+1)},u \in \mathcal{C}^1\left(\overline{\Omega \cap U}\right)$. Thus $e^{-w'}\in \mathcal{C}^1\left(\overline{\Omega \cap U}\right)$ so that we can differenciate in $\Omega\cap U$ and let $z$ tend to any point in $\partial \Omega \cap U$ to deduce
$$\lim_{z\to \partial \Omega\cap U}\left\lvert \nabla e^{-w'}\right\rvert_z=\lim_{z\to \partial \Omega\cap \overline{U}} \left \lvert \nabla \varphi^{(n+1)}\right \rvert_z \neq 0 ,$$
because of points $(2),(3)$ of Proposition \ref{FeffAp}.

\item
Fix $\gamma \in \left]0,\min\left(2n+1,2l-1\right)\right[$.
\\Let $z \in U \cap \{\lvert \varphi \rvert <\epsilon \}$. Let $z_0 \in \partial \Omega\cap U \cap \{\lvert \varphi\rvert <\epsilon \}$ such that $d(z,\partial \Omega)=\lvert z-z_0 \rvert=:s$. Set $\overrightarrow{v}:=z-z_0$. Define the following function: 
$$
\begin{array}{cccc}
f:& [0,1 ] &\longrightarrow &\mathbb{R}
\\ &t       &\longmapsto &u\left(z_0+ t\overrightarrow{v}\right).
\end{array}
$$
According to point \mref{ByProduct} of Proposition \ref{FirstCsq} we have $f \in \mathcal{C}^1\left([0,s]\right)$. 
Moreover, by the Cauchy-Schwarz inequality we have $\left \lvert f'(t)\right\rvert \leq \left \lvert \nabla u \right \rvert_{z_0+t\overrightarrow{v}} \left \lvert \overrightarrow{v} \right \rvert=s\left \lvert \nabla u \right \rvert_{z_0+t\overrightarrow{v}}$. From point $(1)$ of Remark \ref{Already} we also have $u(z_0)=0$. Using the fundamental theorem of calculus we deduce: 
\begin{align*}
\lvert u(z) \rvert =\lvert f(1)-f(0)\rvert &=\left\lvert\int_0^1 f'(t) \; dt\right\rvert,
\\&\leq s \int_0^1 \left \lvert \nabla u \right \rvert_{z_0+t\overrightarrow{v}} \; dt,
\\&\leq cs \int_0^1 \left(-\varphi\left(z_0+t\overrightarrow{v}\right)\right)^{\frac{\gamma}{2}-1}\; dt,
\\&\leq \frac{cs }{\inf_{[0,1]}  h'(t)} \int_0^1 h'(t)\left(h(t)\right)^{\frac{\gamma}{2}-1} \; dt,
\\&=\frac{2cs}{\gamma \inf_{[0,1]}  h'(t)}\int_0^1 \left(h^{\frac{\gamma}{2}}\right)'(t)\; dt,
\\&=\frac{2cs}{\gamma \inf_{[0,1]}  h'(t)}\left(-\varphi(z)\right)^\frac{\gamma}{2},
\\&\leq \frac{2cs}{\gamma \inf_{[0,1]}  h'(t)},
\end{align*}
where $h:=-\varphi\left(z_0+\cdot\overrightarrow{v}\right)\in \mathcal{C}^1\left([0,1]\right)$. According to point $(3)$ of Proposition \ref{FeffAp} we have $\displaystyle \inf_{[0,1]}  h'>0$. Hence the result. 
\end{enumerate}
\end{proof}

Proposition \ref{Dpu} is exactly as in \cite{Bla1}, the only difference being that we have the estimates for every $\gamma \in \left]0,\min\left(2n+1,2l-1\right)\right[$.

\begin{prop}
\label{Dpu}
Under the hypothesis and notations of Proposition \ref{ControlNab}, we have: for every $ \gamma \in ]0;\min(2n+1,2l-1)[$, there exist positive constants $\epsilon$ and $c$ such that for every integer $0\leq p \leq k-2l$, the following holds on $ \Omega \cap U \cap \{\left \lvert \varphi \right \rvert < \epsilon \}$: 
$$\left\lvert D^p u \right\rvert_w\leq c \left \lvert \varphi\right \rvert^\frac{\gamma}{2},$$
where $\left \lvert D^p u \right \rvert_w$ is the length of the p-th covariant derivative of $u$ with respect to $\left(w_{i\bar{j}}\right)$.
\end{prop}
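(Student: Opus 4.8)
The plan is to run the derivative bootstrap of \cite{CY}, in the localized form of \cite{Bla1}, by induction on $p$, the cases $p=0$ and $p=1$ being Propositions \ref{FirstCsq}$(3)$ and \ref{ControlNab}. So one fixes $\gamma\in\,]0,\min(2n+1,2l-1)[$, assumes $2\le p\le k-2l$ together with the bound $|D^ju|_w\le c|\varphi|^{\gamma/2}$ near $\partial\Omega\cap U$ for every $j<p$, and wants the same bound for $j=p$.

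The engine is a rescaling near the boundary. Fix $z\in\Omega\cap U$ close to $\partial\Omega$. By \mref{Lap} the complete Kähler--Einstein metric $(w'_{i\bar j})$ is a bounded perturbation of $(w_{i\bar j})$ with $w=-Log(-\varphi^{(l)})$, and since $\partial\Omega\cap U$ is strictly pseudoconvex the latter is asymptotic to the complex hyperbolic metric; formulas \mref{-Log(-psi)Eq} and \mref{-Log(-psi)InvEq} with $\psi=\varphi^{(l)}$, together with Proposition \ref{FeffAp}, make this precise up to order $k-2l$. One then produces a biholomorphism $\Phi_z$ from a fixed unit ball onto the $d_w$-ball of radius $1$ about $z$ such that $\Phi_z^*(w_{i\bar j})$ and $\Phi_z^*(w'_{i\bar j})$ are uniformly (in $z$) comparable to the Euclidean metric and have $\mathcal C^m$-norms bounded independently of $z$ for $m\le k-2l$. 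On such a ball $-\varphi$ varies by only a bounded factor, and $|D^ju|_w$, $|D^mF|_w$ become, up to uniform constants, Euclidean sup-norms of derivatives of $\Phi_z^*u$ and $\Phi_z^*F$, so it is enough to bound $\sup_{B(0,1/2)}|D^p(\Phi_z^*u)|$ by $|\varphi(z)|^{\gamma/2}$.

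Pulling back \mref{MA} gives a uniformly elliptic complex Monge--Ampère equation on the fixed ball which, since $|u|\le c|\varphi|^{\gamma/2}\to 0$ at $\partial\Omega$ (Proposition \ref{FirstCsq}$(3)$), is a controlled perturbation of its linearization. Differentiating it $p$ times yields a linear elliptic equation for $D^p(\Phi_z^*u)$ whose inhomogeneous term involves only: lower-order derivatives of $u$ (bounded by $c|\varphi|^{\gamma/2}$ by the inductive hypothesis), covariant derivatives of the curvature of $(w_{i\bar j})$ (uniformly bounded on the charts by Proposition \ref{FeffAp}), and covariant derivatives $D^mF$ of $F$ with $m\le p$, which one controls by arguing as in Proposition \ref{NabFOK} from points $(6)$--$(8)$ of Proposition \ref{FeffAp} and which are $\lesssim|\varphi|^{\gamma/2}$ in the admissible range once $\gamma<2l-1$. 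Interior Schauder estimates on the fixed ball then bound $\sup_{B(0,1/2)}|D^p(\Phi_z^*u)|$ by $\sup_{B(0,1)}|\Phi_z^*u|$ plus these contributions, all $\lesssim|\varphi(z)|^{\gamma/2}$; transporting the estimate back to $z$ closes the induction.

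The main obstacle — and the reason this is essentially a reprise of \cite{Bla1} rather than new work — is the uniform control of the rescaled charts: one must verify that the $\mathcal C^m$-norms and ellipticity constants of $\Phi_z^*(w_{i\bar j})$ do not degenerate as $z\to\partial\Omega\cap U$, which amounts to differentiating the asymptotic expansion of $-Log(-\varphi^{(l)})$ up to order $k-2l$ and to matching powers of $(-\varphi)$ carefully in the differentiated equation so that nothing worse than $(-\varphi)^{\gamma/2}$ appears on the right-hand side. All of this is carried out in \cite{Bla1}; the only change here is that our improved Propositions \ref{ControlNab} and \ref{FirstCsq} feed in the exponent $\gamma/2$ for every $\gamma<\min(2n+1,2l-1)$, so we merely indicate the modifications.
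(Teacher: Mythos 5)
Your proposal is correct and takes essentially the same route as the paper: both reduce the higher-order decay estimates to the Cheng--Yau/Bland Schauder bootstrap performed in uniformly controlled coordinates near $\partial\Omega\cap U$, feeding in the improved exponent from Propositions \ref{ControlNab} and \ref{FirstCsq}, controlling the $F$-terms through points $(6)$--$(8)$ of Proposition \ref{FeffAp}, and deferring the uniformity of the charts to \cite{Bla1} exactly as the paper does. The only difference is presentational: the paper packages the differentiation/induction step by passing to the exact linearized equation $(n+1)u-F=h_{i\bar{j}}u_{j\bar{i}}$ with $h_{i\bar{j}}=\int_0^1\left(w+tu\right)^{i\bar{j}}\,dt$, uniformly elliptic with respect to $\left(w^{i\bar{j}}\right)$ by \mref{c_1} with $\delta=0$, and then invokes Bland's Schauder argument, rather than differentiating the Monge--Amp\`ere equation $p$ times in rescaled charts as you sketch.
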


\begin{proof}[Proof of Proposition \ref{Dpu}]
We fix $\gamma \in \left ]0, \min \left(2n+1,2l-1\right)\right[$ and follow line by line the proof at the beginning of page 300 in \cite{Bla1}, the only thing that changes being the range in which $\gamma$ can be choosen.
Namely, we apply $Log\circ Det$ to equation \mref{MA} to obtain the following partial differential equation of second order: 
\begin{equation}
\label{MALin}
(n+1)u-F=h_{i\bar{j}} u_{j\bar{i}},
\end{equation}
where $\displaystyle \left(h_{i\bar{j}}\right):=\left(\int_0^1\left(w+tu\right)^{i\bar{j}}\;dt\right)\in \mathcal{C}^{k-2l-2}\left(\Omega \cap U \cap \{\left \lvert \varphi \right \rvert  <\epsilon \},\mathcal{H}_n^{++}\right).$
We use equation \mref{c_1} with $\delta=0$ to deduce the existence of constants $\epsilon,c>0$ such that we have, on $\Omega \cap U \cap \{\left \lvert \varphi \right \rvert < \epsilon \}$: 
$$\frac{1}{c}\left(w^{i\bar{j}}\right)\leq \left(h_{i\bar{j}}\right)\leq c\left(w^{i\bar{j}}\right).$$
Moreover $u\in \mathcal{C}^{k-2l}\left(\Omega \cap U \cap \{\lvert \varphi \rvert < \epsilon \}\right)$, and according to Proposition \ref{FeffAp} we have $F,\frac{F}{\left(-\varphi\right)^l} \in \mathcal{C}^{k-2l-2}\left(\overline{\Omega \cap U \cap \{\lvert \varphi \rvert < \epsilon \}}\right)$. We conclude by applying Schauder theory.
\end{proof}

In particular, we deduce the following, exactly as was done in \cite{Bla1}: 
\begin{prop}
\label{Reg}
Under the notations and hypothesis of Proposition \ref{ControlNab}, for every number $\gamma \in ]0,\min\left(2n+1,2l-1\right)[$ and for every $0\leq \delta < \frac{\gamma}{2}-\left\lfloor\frac{\gamma}{2}\right\rfloor$ (where $\left\lfloor\frac{\gamma}{2}\right\rfloor$ denotes the integral part of $\frac{\gamma}{2}$), we have: $u,e^{-u} \in \mathcal{C}^{\left\lfloor\frac{\gamma}{2}\right\rfloor+\delta}\left(\overline{\Omega \cap U}\right)$. Moreover, if $\gamma > 2$, we have: $\varphi e^{-u}\in \mathcal{C}^{\left\lfloor\frac{\gamma}{2}\right\rfloor+1+\delta}\left(\overline{\Omega \cap U}\right)$. 

\end{prop}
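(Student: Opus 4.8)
The plan is to upgrade the covariant-derivative bounds of Proposition~\ref{Dpu} into Hölder regularity up to $\partial\Omega\cap U$. Fix $\gamma\in\,]0,\min(2n+1,2l-1)[\,$, write $\rho:=-\varphi$ and $m:=\lfloor\gamma/2\rfloor$. Since $\inf_{\overline U}\lvert\nabla\varphi\rvert>0$ (Proposition~\ref{FeffAp}) and $\varphi$ vanishes on $\partial\Omega\cap\overline U$, the function $\rho$ is comparable to the Euclidean distance to $\partial\Omega$ on $\Omega\cap U\cap\{\rho<\epsilon\}$ for $\epsilon$ small; moreover, from $\gamma<\min(2n+1,2l-1)$ and $k\geq\max(2n+9,3n+6)$ one checks that $m+2\leq k-2l$, so Proposition~\ref{Dpu} provides $\lvert D^pu\rvert_w\leq c\,\rho^{\gamma/2}$ for all $0\leq p\leq m+2$. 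Away from $\partial\Omega$ the function $u$ has the regularity of $\varphi^{(l)}$, so it suffices to work near $\partial\Omega\cap U$ (shrinking $U$ if necessary).

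The first step, which is also the only genuinely technical one, is the conversion of these into weighted Euclidean estimates
\[
\lvert \partial^p u\rvert\leq C\,\rho^{\gamma/2-p}\quad\text{on }\Omega\cap U\cap\{\rho<\epsilon\},\qquad 0\leq p\leq m+2,
\]
where $\lvert\partial^p u\rvert$ denotes the Euclidean length of the array of order-$p$ partial derivatives of $u$. For $p=1$ this is precisely part $(\ref{ByProduct})$ of Proposition~\ref{FirstCsq}. For $p\geq 2$ one combines Proposition~\ref{EstInv} (applied with $\psi=\varphi^{(l)}$ and $g=w$, and using that $\varphi^{(l)}/\varphi$ and $-\psi+\lvert\nabla\psi\rvert_\psi^2$ are bounded and positive near $\partial\Omega\cap\overline U$: it gives $c\,\rho^{2}I\leq(w^{i\bar j})\leq C\,\rho\,I$, hence $\lvert D^pu\rvert\leq C\rho^{-p}\lvert D^pu\rvert_w$) with the expansion of $\partial^pu$ as $D^pu$ plus a universal combination of the Christoffel symbols of $w$, their Euclidean derivatives and the lower covariant derivatives $D^qu$, $q<p$. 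The estimates needed for this, namely that the Christoffel symbols of $w$ and their derivatives up to order $r$ are $O(\rho^{-1-r})$, follow from the explicit formulas of Proposition~\ref{-Log(-psi)} for $(w_{i\bar j})$ and $(w^{i\bar j})$ in terms of $\varphi^{(l)}$, using $\varphi^{(l)}\in\mathcal{C}^{k-2l}(\overline U)$ and $\inf_{\overline U}\lvert\nabla\varphi^{(l)}\rvert>0$, exactly as in \cite{Bla1} (and implicitly in \cite{CY}). I expect this to be the main obstacle, although it is by now standard; elsewhere the only traps I anticipate are the verification of $m+2\leq k-2l$ and the bookkeeping of the Hölder exponent below.

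Granting the displayed estimates, set $\alpha:=\gamma/2-m\in[0,1[\,$. For $p\leq m-1$ the exponent $\gamma/2-p$ is $\geq\alpha+1>0$, while for $p=m$ it equals $\alpha$; hence, when $\alpha>0$, the order-$p$ derivatives of $u$ with $p\leq m-1$ extend continuously by $0$ to $\overline{\Omega\cap U}$, and the estimate $\lvert\partial^{m+1}u\rvert\leq C\rho^{\alpha-1}$ together with the classical Hardy--Littlewood lemma (a function whose gradient is $O(d^{\alpha-1})$ near a $\mathcal{C}^1$ boundary piece, $d$ being the distance to it, extends to a function that is Hölder of exponent $\alpha$) shows that the order-$m$ derivatives of $u$ are Hölder of exponent $\alpha$ up to $\overline{\Omega\cap U}$. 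Thus $u\in\mathcal{C}^{m+\alpha}(\overline{\Omega\cap U})\subset\mathcal{C}^{m+\delta}(\overline{\Omega\cap U})$ for every $\delta<\gamma/2-\lfloor\gamma/2\rfloor$ (the assertion being vacuous when $\alpha=0$), and $e^{-u}\in\mathcal{C}^{m+\delta}(\overline{\Omega\cap U})$ follows since $u$ is bounded and Hölder classes are preserved by composition with smooth functions. Finally, for $\gamma>2$ I would write $\varphi e^{-u}=\varphi+\varphi(e^{-u}-1)$; here $\varphi\in\mathcal{C}^k(\overline U)\subset\mathcal{C}^{m+1+\delta}(\overline{\Omega\cap U})$, and since $\lvert\partial^r(e^{-u}-1)\rvert\leq C\rho^{\gamma/2-r}$ for $r\leq m+2$ (chain rule applied to the displayed estimates), the Leibniz rule gives $\lvert\partial^p(\varphi(e^{-u}-1))\rvert\leq C\rho^{\gamma/2-p+1}$ for $p\leq m+2$, the factor $\varphi\asymp\rho$ absorbing one order of blow-up. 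In particular $\partial^{m+1}(\varphi(e^{-u}-1))\to 0$ at $\partial\Omega$ and $\lvert\partial^{m+2}(\varphi(e^{-u}-1))\rvert\leq C\rho^{\alpha-1}$, so the same Hardy--Littlewood argument gives $\varphi(e^{-u}-1)\in\mathcal{C}^{m+1+\delta}(\overline{\Omega\cap U})$ and therefore $\varphi e^{-u}\in\mathcal{C}^{m+1+\delta}(\overline{\Omega\cap U})$.
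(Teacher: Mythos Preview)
Your argument is correct and follows essentially the same route as the paper's proof: both convert the covariant estimates $\lvert D^pu\rvert_w\leq c\rho^{\gamma/2}$ of Proposition~\ref{Dpu} into weighted Euclidean bounds $\lvert\partial^pu\rvert\leq C\rho^{\gamma/2-p}$ via Proposition~\ref{EstInv} and the Christoffel-symbol estimates for $w=-\log(-\varphi^{(l)})$, and then read off the H\"older regularity from the bound at order $p=\lfloor\gamma/2\rfloor+1$. The only cosmetic differences are that you invoke the Hardy--Littlewood lemma explicitly where the paper simply asserts the H\"older extension, and that for $\varphi e^{-u}$ you use the decomposition $\varphi e^{-u}=\varphi+\varphi(e^{-u}-1)$ while the paper separates $(\varphi e^{-u})_I=\varphi_I e^{-u}+O(\rho^{\gamma/2-p+1})$; both reductions exploit the same fact that the factor $\varphi\asymp\rho$ absorbs one order of blow-up.
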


\begin{proof}[Proof of Proposition \ref{Reg}]
This is exactly as in \cite{Bla1} (or \cite{CY} for a global version). Observe that since $k-2l\geq 3n+6-2(n+1)\geq n+2 \geq \frac{\gamma}{2}$, $u\in \mathcal{C}^{n+2}\left(\Omega \cap U\right)$ and $\varphi \in \mathcal{C}^{n+2}\left(\overline{U}\right)$ (see Proposition \ref{FeffAp}), it is enough to prove the existence of a positive constant $\epsilon$ such that for every $0\leq \delta < \frac{\gamma}{2}-\left\lfloor\frac{\gamma}{2}\right\rfloor$, one has $u\in \mathcal{C}^{\left\lfloor\frac{\gamma}{2}\right\rfloor+ \delta}\left(\overline{\Omega \cap U \cap \{ \lvert\varphi\rvert < \epsilon\}}\right)$ and $\varphi e^{-u}\in \mathcal{C}^{\left\lfloor\frac{\gamma}{2}\right\rfloor+1+\delta}\left(\overline{\Omega \cap U \cap \{ \lvert\varphi\rvert < \epsilon\}}\right)$.
\\Let $\gamma \in ]0,\min\left(2n+1,2l-1\right)[$. According to Proposition \ref{Dpu}, there exist positive constants $\epsilon$ and $c$ such that for every integer $0\leq p \leq k-2l$, the following holds on $ \Omega \cap U \cap \{\left \lvert \varphi \right \rvert < \epsilon \}$: 
$$\left\lvert D^p u \right\rvert_w\leq c\left \lvert \varphi\right \rvert^\frac{\gamma}{2}.$$
Moreover, according to Proposition \ref{EstInv}, there exist positive constants $\lambda\leq \Lambda$ such that the following holds on $\Omega \cap U$: 
$$\lambda\left(\frac{-\psi}{-\varphi}\right)\frac{-\psi}{-\psi +\left \lvert \nabla \psi \right \rvert ^2}I\leq \left(\displaystyle \frac{w^{i\bar{j}}}{-\varphi}\right) \leq \Lambda\left(\frac{-\psi}{-\varphi}\right) I.$$
Since $\left(\frac{-\psi}{-\varphi}\right) \in \mathcal{C}\left(\overline{U}\right)$ is a positive function (see Proposition \ref{FeffAp}) and $\overline{U}$ is a compact set, we deduce that there exist positive constants $M$ and $M'$ such that the following holds on $\Omega \cap U$: 
$$\lambda M \frac{-\psi}{-\psi +\left \lvert \nabla \psi \right \rvert ^2}I\leq \left(\displaystyle \frac{w^{i\bar{j}}}{-\varphi}\right) \leq \Lambda M' I.$$
Together with the expression of $\left \lvert D^p u \right \rvert_w$ in terms of the derivatives of $u$ and of $w$, this implies the existence of positive constants $\epsilon$ and $c$ such that for every integer $0\leq p \leq k-2l$ and every multi-index $(i_1,j_1,\cdots,i_n,j_n)\in \mathbb{N}^{2n}$ satisfying $\sum_{k=1}^n(i_k+j_k)\leq p$, the following holds on $ \Omega \cap U \cap \{\left \lvert \varphi \right \rvert < \epsilon \}$: 
$$\left\lvert u_{i_1\overline{j_1}\cdots i_n\overline{j_n}} \right\rvert,\left\lvert \left(e^{-u}\right)_{i_1\overline{j_1}\cdots i_n \overline{j_n}} \right\rvert\leq c\left \lvert \varphi\right \rvert^{\frac{\gamma}{2}-p}.$$
$\bullet$ Let $p=\left\lfloor\frac{\gamma}{2}\right\rfloor$. Then the derivatives of $u$ of order $p$ extend continuously to $ \Omega \cap U \cap \{\left \lvert \varphi \right \rvert < \epsilon \}$ (and are equal to $0$ on $\partial{\Omega} \cap \overline{U}$), and these extensions are Hölder of exponent $\delta$ for every $0\leq \delta < \frac{\gamma}{2}-\left\lfloor\frac{\gamma}{2}\right\rfloor$. This gives the desired regularity of $u$ and $e^{-u}$.
\\ $\bullet$ According to the chain rule and the regularity of $\varphi$ and $e^{-u}$, we have the existence of a constant $c>0$ such that the following holds on $\Omega \cap U \cap \{ \lvert \varphi \rvert < \epsilon \}$: 
$$\left\lvert \left(\varphi e^{-u}\right)_{i_1 \overline{j_1}\cdots i_n \overline{j_n}} - \varphi _{i_1 \overline{j_1} \cdots i_n\overline{j_n}}e^{-u}\right\rvert\leq c\left \lvert \varphi\right \rvert^{\frac{\gamma}{2}-(p-1)}.$$
Moreover, we have $\displaystyle \varphi _{i_1\overline{j_1}\cdots i_n \overline{j_n}}e^{-u} \in \mathcal{C}^1\left(\overline{\Omega \cap U \cap\{\lvert \varphi \rvert < \epsilon \}} \right)\subset \cap_{0\leq \delta\leq 1} \mathcal{C}^{\delta}\left(\overline{\Omega \cap U \cap\{\lvert \varphi \rvert < \epsilon \}} \right)$ because we assume that $\frac{\gamma}{2}> 1$. Let $p=\left\lfloor\frac{\gamma}{2}\right\rfloor+1$. Then the derivatives of $\varphi e^{-u}$ of order $p$ extend continuously to $ \Omega \cap U \cap \{\left \lvert \varphi \right \rvert < \epsilon \}$ and these extensions are Hölder of exponent $\delta$ for every $0\leq \delta < \frac{\gamma}{2}-\left\lfloor\frac{\gamma}{2}\right\rfloor$. This gives the desired regularity of $\varphi e^{-u}$.
\end{proof}

\subsection{Proof of Theorems \ref{CYLoc} and \ref{Main}}
We deduce Theorem \ref{CYLoc} by using Proposition \ref{Reg}: 

\begin{proof}[Proof of Theorem \ref{CYLoc}]
We take $l=n+1$. Then, according to Proposition \ref{NabFOK}, the range of $\gamma$ is $]0,2n+1[$. Let $\alpha\in \left]0,1\right[$ and take $\gamma:=2n+\alpha$ so that $\displaystyle \left\lfloor \frac{\gamma}{2}\right\rfloor=n$. We apply Proposition \ref{Reg} to obtain $\varphi e^{-u} \in \mathcal{C}^{n+1 +\delta}\left(\overline{\Omega\cap U}\right)$ for every $0\leq \delta <\frac{\alpha}{2}$. Since $k-2(n+1)\geq n+2$, then $\displaystyle \frac{\varphi^{(n+1)}}{\varphi} \in \mathcal{C}^{n+2}\left(\overline{U}\right)$ by point $(7)$ of Proposition \ref{FeffAp}. We directly deduce that $\displaystyle -w'=\varphi^{(n+1)}e^{-u}=\left(\frac{\varphi^{(n+1)}}{\varphi}\right) \varphi e^{-u}\in \mathcal{C}^{n+1 +\delta}\left(\overline{\Omega\cap U}\right)$. This holds for every $0\leq \delta <\frac{\alpha}{2}<\frac{1}{2}$, hence the result.
\end{proof}

We can also prove Theorem \ref{Main}: 
\begin{proof}[Proof of Theorem \ref{Main}]
By definition, 
$$Bis_{g,z}(v,w)=\displaystyle \frac{\sum_{1\leq i,j,k,l \leq n}R_{i\bar{j}k\bar{l}}(g)v_i\overline{v_j}w_k \overline{w_l}}{\lvert v \rvert_{g,z} ^2 \lvert w \rvert_{g,z} ^2},$$
 where the curvature coefficients satisfy the following formula which follows from the definition by direct calculations: 
\begin{equation}
\begin{array}{lll}
\label{RPhi}
R_{i\bar{j}k\bar{l}}(g)=&-(g_{i\bar{j}}g_{k\bar{l}}+g_{i\bar{l}}g_{k\bar{j}})
\\
\\ &+\displaystyle\frac{1}{-\psi}\left(R_{i\bar{j}k\bar{l}}(\psi)-\frac{1}{|\nabla \psi|_\psi^2-\psi}\underbrace{\left(\psi_{ik}-\psi_{ik\bar{p}}\psi^{\bar{p}q}\psi_q\right)}_{\psi_{,ik}:=}\underbrace{\left(\psi_{\bar{j}\bar{l}}-\psi_{\bar{p}}\psi^{\bar{p}q}\psi_{q\bar{j}\bar{l}}\right)}_{\psi_{,\bar{j}\bar{l}}:=}\right).
\end{array}
\end{equation}
Therefore, if $v,w \in S(0,1)$, we have the following on $\Omega \cap U$: 
\begin{align*}
Bis_{g}(v,w)=&-\underbrace{\left(1+\frac{\left\lvert \langle v;w\rangle_{g}\right \rvert^2}{\lvert v \rvert_{g}^2 \lvert w \rvert_{g}^2}\right)}_{=:T_1(v,w)}
\\ &+\underbrace{\frac{1}{-\psi}\frac{|v|_{\psi}^2|w|_{\psi}^2}{|v|_{g}^2|w|_{g}^2}Bis_{\psi}(v,w)}_{=:T_2(v,w)}
\\ &-\underbrace{\frac{1}{-\psi}\frac{1}{|\nabla \psi|_\psi^2-\psi}\frac{\psi_{,ik}\psi_{,\bar{j}\bar{l}}v_i\overline{v_j}w_k\overline{w_l}}{|v|_{g}^2|w|_{g}^2}}_{=:T_3(v,w)}.
\end{align*}
Using the proof of Proposition \ref{Reg} with $\frac{\gamma}{2} = n+ \delta\geq 2+\delta$ for some fixed $0<\delta<\frac{1}{2}$ we have the existence of positive constants $c,\epsilon>0$ such that the following holds on $\Omega \cap U \cap \{ \lvert \varphi \rvert < \epsilon \} $ for every $1\leq i,j,k,l \leq n$: 
$$\left\lvert \psi_{i\bar{j}k\bar{l}} \right\rvert\leq c\left \lvert \varphi\right \rvert^{-1+\delta},$$
and we also have $\psi \in \mathcal{C}^3\left(\overline{\Omega \cap U \cap \{ \lvert \varphi \rvert < \epsilon \}}\right)$.
\\The rest of the proof consists of estimating $\lvert T_2(v,w) \rvert$ and $ \lvert T_3(v,w) \rvert$. This will directly follow from formulas \mref{Comp1} and \mref{Spe}.
\\$\bullet$ Using the notations of Proposition \ref{-Log(-psi)} and of the proof of Proposition \ref{-Log(-psi)}, we have $0\leq B$, hence $I\leq A$, hence $\left(\psi_{i \bar{j}}\right)=:R^2 \leq RAR=(-\psi)(g_{i\bar{j}}).$

This means that for every $v\in \mathbb{C}^n$, the following holds on $\Omega \cap U $:  
\begin{align}
\label{Comp1} |v|_{\psi}^2 &\leq (-\psi)|v|_{g}^2.
\end{align}
\\$\bullet$ Since $\overline{\Omega \cap U \{ \lvert \varphi \rvert <\epsilon \} } $ is compact and $\psi \in \mathcal{C}^2\left(\overline{\Omega\cap U \cap \{ \lvert \varphi \rvert <\epsilon \} }\right)$, we also have the existence of a positive constant $0<\lambda_-$ such that the following inequality holds on $\Omega\cap U \cap \{ \lvert \varphi \rvert <\epsilon \}  $: 
\begin{equation}
\label{Spe}
\lambda_- I\leq (\psi_{i\bar{j}}).
\end{equation}

We complete the proof as follows. According to inequality \mref{Comp1}, we have the following on $\Omega\cap U$ for every vectors $v,w \in S(0,1)$:   
$$ \frac{1}{-\psi}\frac{|v|_{\psi}^2|w|_{\psi}^2}{|v|_{g}^2|w|_{g}^2}\leq (-\psi)=(-\varphi)e^{-u}.$$
Moreover, there exists a constant $c>0$ such that for all $1\leq i,j,k,l \leq n$ we have $\left \lvert R_{i\bar{j}k\bar{l}}\left(\psi\right) \right \rvert \leq c \left \lvert \varphi \right \rvert ^{\delta -1}$ on $\Omega\cap U \cap \{ \lvert \varphi \rvert < \epsilon \}$. Hence there exists a positive constant $c>0$ such that $\left \lvert T_2(v,w)\right \rvert \leq c \lvert\varphi \rvert^\delta$ on $\Omega\cap U \cap \{ \lvert \varphi \rvert < \epsilon \}$.

Likewise, using inequalities \mref{Comp1} and \mref{Spe} we obtain, on $\Omega \cap U$ and for every vectors $v,w \in S(0,1)$: 
$$-\frac{1}{-\psi}\frac{1}{\left\lvert \nabla\psi\right \rvert_\psi^2-\psi}\frac{1}{|v|_{g}^2|w|_{g}^2}\leq \frac{(-\psi)}{\left\lvert \nabla\psi\right \rvert_\psi^2-\psi}\frac{1}{\lambda_-^2}=\frac{(-\varphi)}{\left\lvert \nabla\psi\right \rvert_\psi^2-\psi}\frac{e^{-u}}{\lambda_-^2}.$$
Note that (up to taking a smaller positive $\epsilon$) $\left\lvert \nabla \psi\right \rvert_\psi^2-\psi \in \mathcal{C} \left(\overline{\Omega \cap U \cap \{ \lvert \varphi \rvert < \epsilon \} } \right)$ and is a positive function thanks to point \mref{CYLocBis} of Proposition \ref{FirstCsq}.
Moreover, there exists a constant $c>0$ such that for all $1\leq i,j,k,l \leq n$ we have $\left \lvert \psi_{,ik}\psi_{,\bar{j}\bar{l}} \right \rvert \leq c $ on $\Omega\cap U \cap \{ \lvert \varphi \rvert < \epsilon \}$. Hence there exists a positive constant $c>0$ such that $\left \lvert T_3(v,w)\right \rvert \leq c \lvert\varphi \rvert$ on $\Omega\cap U \cap \{ \lvert \varphi \rvert < \epsilon \}$.
\\Using the triangle inequality, we deduce the existence of positive constants $\epsilon,c > 0$ such that the following inequality holds on $\Omega\cap U \cap \{ \lvert \varphi \rvert < \epsilon \}$: 
$$   \sup_{v,w \in S(0,1)}\left|Bis_{g}\left(v,w\right)+T_1(v,w)\right|\leq  c\lvert\varphi \rvert ^\delta.$$
We obtain the result since $\displaystyle \lim_{z\to q} \varphi(z)=0$ and $\delta>0$.
\end{proof}

\begin{rem}
Regardless that $n\geq 2$, the asymptotic curvature behavior \mref{AsyBis} does not persist if we remove the hypothesis of strict pseudoconvexity. 
\\For instance, if $m \in \mathbb{N}^\ast$, in the ``egg domain"
$\{(z_1,z_2)\in \mathbb{C}^2 / \lvert z_1 \rvert^2+\lvert z_2 \rvert ^{2m} <1\},$
we can easily deduce from the computations done in \cite{Bla2} that, at $q=(1,0)$: 
$$\forall v,w \in S(0,1),\quad -3+\frac{3}{2m+1}\leq \lim_{t\to 1^-}Bis_{g,(t,0)}(v,w)\leq -\frac{3}{2m+1}.$$
This differs from \mref{AsyBis} if $m\geq 2$. We also notice that the same approach as in \cite{Bla2} may be adapted to obtain the same estimates in tube domains  $\{(z_1,z_2)\in \mathbb{C}^2 / Re(z_1)+Re(z_2)^{2m} <1\}$ at $q=(1,0)$ for $m\in \mathbb{N}^\ast$.
\end{rem}

\section{Proof of Theorem \ref{BiSqueez}}
We recall the definition of the squeezing function of a domain.

\begin{defn}
\label{DefSq}
Let $\Omega \subset \mathbb{C}^n$ be a domain. For $z \in \Omega$, let  
\[ \mathcal{F}_z^\Omega:= \{f:\Omega \longrightarrow B(0,1) / \text{ $f$ is holomorphic, injective and } f(z)=0\}.\]
The \textit{squeezing funtion} of $\Omega$ at point $z$ is defined by $s^\Omega(z):=\sup\{r>0 / \exists f \in \mathcal{F}_z^\Omega , B(0,r) \subset f(\Omega)\}$.
\end{defn}
In \cite{DGZ1} the authors prove that the supremum in Definition \ref{DefSq} is achieved.

In the rest of this Section, every domain that appears possesses a unique complete Kähler-Einstein potential which is solution to Equation \mref{MAOri} with condition \mref{BVal} and we only work with this one. Moreover, given a domain $D$ with complete Kähler-Einstein potential $g$ solving Equation \mref{MAOri} with condition \mref{BVal}, we use the notations $\langle \cdot , \cdot \rangle_{z}^D, \left\lvert \cdot \right \rvert_{z}^D, Bis_{z}^D$ instead of the previous notations $\langle \cdot , \cdot \rangle_{g,z}, \left\lvert \cdot \right \rvert_{g,z}, Bis_{g,z}$ to avoid confusions.
\\We need the following Lemma, which is a direct consequence of the proof of Theorem 7.5. in \cite{CY}:
\begin{lem}
\label{Exh}
Let $D \subset \mathbb{C}^n$ be a bounded pseudoconvex domain. Let $\left(D_\nu\right)_{\nu \in \mathbb{N}}$ be an exhaustion of $D$ by strictly pseudoconvex domains with boundary of class $\mathcal{C}^\infty$. Then, up to extracting a subsequence from $\left(D_\nu\right)_{\nu \in \mathbb{N}}$, the following holds for every compact set $K\subset D$:
\[\sup_{z\in K} \sup_{v,w\in \mathbb{C}^n\setminus\{0\}} \left \lvert \left \langle v,w \right \rangle_{z}^{D_\nu}-\left\langle v, w \right \rangle_{z}^{D}\right \rvert \underset{\nu \to \infty}{\longrightarrow}0,\] 
\[\sup_{z\in K} \sup_{v,w\in \mathbb{C}^n\setminus\{0\}} \left \lvert Bis_{z}^{D_\nu}(v,w)-Bis_{z}^{D}(v,w)\right \rvert \underset{\nu \to \infty}{\longrightarrow}0.\]
\end{lem}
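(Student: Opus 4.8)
The plan is to show that, after passing to a subsequence of $(D_\nu)_{\nu\in\mathbb{N}}$, the complete Kähler-Einstein potentials $g_\nu$ of the $D_\nu$ converge to the complete Kähler-Einstein potential $g$ of $D$ in $\mathcal{C}^m(K)$ for every integer $m$ and every compact $K\subset D$, and then to read off the two displayed limits from the resulting uniform convergence on compacts of the metric coefficients, of the inverse metric coefficients, and of the curvature coefficients.

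First I would establish, on a fixed compact $K\subset D$ (so that $K\subset D_\nu$ for all large $\nu$), a priori estimates that are uniform in $\nu$. A two-sided $\mathcal{C}^0$ bound on $g_\nu$ over $K$ follows from the monotonicity of the complete Kähler-Einstein potential under inclusion of domains (a smaller domain carries a larger potential), which is itself a consequence of the maximum principle applied to equation \mref{MAOri}: choosing $\nu_0$ with $K\subset D_{\nu_0}$ and a ball $B$ with $D\subset B$, one gets $g_B\le g_\nu\le g_{\nu_0}$ on $D_{\nu_0}$ for every $\nu\ge\nu_0$. Next, the interior second-order estimate of Cheng and Yau --- the technical core of the proof of Theorem~7.5 in \cite{CY} --- provides constants $0<\lambda_K\le\Lambda_K$, independent of $\nu$, with $\lambda_K I\le\big((g_\nu)_{i\bar j}\big)\le\Lambda_K I$ on $K$. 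Once the linearised Monge-Ampère operator is uniformly elliptic on $K$ and $g_\nu$ is uniformly bounded there, differentiating $Det\big((g_\nu)_{i\bar j}\big)=e^{(n+1)g_\nu}$ and applying Schauder theory on slightly smaller compacts upgrades these bounds into uniform $\mathcal{C}^{m,\alpha}$ bounds for every $m$.

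Then I would apply Arzelà--Ascoli together with a diagonal argument over an exhaustion $K_1\subset K_2\subset\cdots$ of $D$ by compacts: a single subsequence of $(g_\nu)$ then converges in $\mathcal{C}^m(K_j)$ for all $m$ and $j$, hence in $\mathcal{C}^m_{\mathrm{loc}}(D)$ for all $m$, to a function $\tilde g$ solving \mref{MAOri} on $D$. By the completeness estimates and the uniqueness statement contained in the proof of Theorem~7.5 in \cite{CY}, $\tilde g$ is the complete Kähler-Einstein potential of $D$, so $\tilde g=g$; in fact every subsequence admits a further subsequence with this same limit, so the whole sequence converges, but the subsequential version stated in the lemma is all that is needed.

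It remains to translate this convergence into the two displays, which is the routine part. Uniform convergence $g_\nu\to g$ in $\mathcal{C}^2(K)$ gives $(g_\nu)_{i\bar j}\to g_{i\bar j}$ uniformly on $K$, and bounding $\big\lvert\langle v,w\rangle_z^{D_\nu}-\langle v,w\rangle_z^{D}\big\rvert$ by the operator norm of $\big((g_\nu)_{i\bar j}(z)-g_{i\bar j}(z)\big)$ times $\lvert v\rvert\,\lvert w\rvert$ yields the first limit. Since the metrics are bounded below on $K$ uniformly in $\nu$, the inverse matrices $\big((g_\nu)^{i\bar j}\big)$ also converge uniformly on $K$; combined with the uniform convergence of the derivatives of $g_\nu$ up to order $4$, this gives uniform convergence on $K$ of the curvature coefficients $R_{i\bar j k\bar l}(g_\nu)\to R_{i\bar j k\bar l}(g)$, which are moreover uniformly bounded. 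As $Bis_z^{D_\nu}(v,w)=\frac{R_{i\bar j k\bar l}(g_\nu)\,v_i\overline{v_j}w_k\overline{w_l}}{\big(\lvert v\rvert_z^{D_\nu}\big)^2\big(\lvert w\rvert_z^{D_\nu}\big)^2}$ is homogeneous of degree $0$ in $v$ and in $w$, and its denominator converges uniformly on $K$ and is bounded below uniformly in $\nu$, the quotients converge uniformly over $z\in K$ and $v,w\in\mathbb{C}^n\setminus\{0\}$, which is the second limit. The only real difficulty is thus the uniform-in-$\nu$ interior estimates, above all the interior $\mathcal{C}^2$ estimate; these are precisely what Cheng and Yau obtain while constructing the complete Kähler-Einstein metric on a bounded pseudoconvex domain, and the sole thing to verify is that their constants depend only on $K$, on $D$ and on a ball containing $D$, not on the individual exhausting domain $D_\nu$, which is the precise sense in which Lemma~\ref{Exh} is a direct consequence of the proof of Theorem~7.5 in \cite{CY}.
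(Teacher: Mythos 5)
Your proposal is correct and takes essentially the same route as the paper, which proves Lemma \ref{Exh} simply by invoking the exhaustion argument of the proof of Theorem 7.5 in \cite{CY}; your write-up merely makes explicit what that citation contains (monotonicity of the potentials via the maximum principle, uniform interior estimates, Arzelà--Ascoli with a diagonal extraction, identification of the limit by uniqueness). The passage from $\mathcal{C}^4$ convergence on compacts to the two displayed limits (with the suprema understood on normalized vectors, as degree-zero homogeneity requires) is the routine step the paper leaves implicit, and you handle it correctly.
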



We prove Theorem \ref{BiSqueez}:
\begin{proof}[Proof of Theorem \ref{BiSqueez}]
Let $\left(z^{(\nu)}\right)_{\nu \in \mathbb{N}}\in \Omega^\mathbb{N}$ such that $\displaystyle \lim_{\nu \to \infty} z^{(\nu)} = q$. For $\nu \in \mathbb{N}$ let $f^{(\nu)} \in \mathcal{F}_{z^{(\nu)}}^\Omega$ such that $B\left(0,s^\Omega\left(z^{(\nu)}\right)\right)\subset f^{(\nu)}(\Omega)$, let $g^{(\nu)}:=\left(1-\frac{1}{2^{\nu +1}}\right)f^{(\nu)}$ and set $\Omega_\nu:=g^{(\nu)}\left(\Omega\right)$. Since $g^{(\nu)}$ is a biholomorphic mapping from the pseudoconvex domain $\Omega$ to $\Omega_\nu$, $\Omega_\nu$ is a bounded pseudoconvex domain. By construction of $g^{(\nu)}$, for every integer $\nu \in \mathbb{N}$ we have $\overline{\Omega_\nu} \subset B(0,1)$. Moreover we have $\displaystyle \lim_{\nu \to \infty}s^\Omega\left(z^{(\nu)}\right)=1$ hence up to taking a subsequence we may assume that $\overline{\Omega_{\nu}}\subset \Omega_{\nu +1}$. 
\\Let $\nu \in \mathbb{N}^\ast$. Since $\Omega_\nu$ is a bounded pseudoconvex domain, there exists an exhaustion of $\Omega_\nu$ by strictly pseudoconvex domains with smooth boundary, so that according to Lemma \ref{Exh} there exists a strictly pseudoconvex domain $D_\nu $ with boundary of class $\mathcal{C}^\infty$ that satisfies $\overline{\Omega_{\nu-1}} \subset D_\nu \subset \Omega_\nu$ and
\begin{align*} 
&\displaystyle \sup_{v,w\in \mathbb{C}^n\setminus\{0\}} \left \lvert \left(\frac{\left \lvert \left \langle v,w \right \rangle_{0}^{\Omega_\nu}\right\rvert}{{\left \lvert  v \right\rvert_{0}^{\Omega_\nu}}{\left \lvert  w \right\rvert_{0}^{\Omega_\nu}}}\right)^2-\left(\frac{\left \lvert \left \langle v,w \right \rangle_{0}^{D_\nu}\right\rvert}{{\left \lvert  v \right\rvert_{0}^{D_\nu}}{\left \lvert  w \right\rvert_{0}^{D_\nu}}}\right)^2\right \rvert \leq \frac{1}{2^{\nu}},
\\&\displaystyle \sup_{v,w\in \mathbb{C}^n\setminus\{0\}} \left \lvert Bis_0^{\Omega_{\nu}}(v,w)-Bis_0^{D_{\nu}}(v,w)\right \rvert \leq \frac{1}{2^{\nu}}.
\end{align*}
Moreover, since each $g^{(\nu)}$ is holomorphic and injective, the linear map $\partial g^{(\nu)}_{z^{(\nu)}}$ is invertible, hence:
\begin{eqnarray} 
&\label{SPCVProcheMet}\displaystyle \sup_{v,w\in \mathbb{C}^n\setminus\{0\}} \left \lvert \left(\frac{\left \lvert \left \langle  \partial g^{(\nu)}_{z^{(\nu)}}(v),\partial g^{(\nu)}_{z^{(\nu)}}(w) \right \rangle_{0}^{\Omega_\nu}\right\rvert}{{\left \lvert  \partial g^{(\nu)}_{z^{(\nu)}}(v) \right\rvert_{0}^{\Omega_\nu}}{\left \lvert  \partial g^{(\nu)}_{z^{(\nu)}}(w) \right\rvert_{0}^{\Omega_\nu}}}\right)^2-\left(\frac{\left \lvert \left \langle  \partial g^{(\nu)}_{z^{(\nu)}}(v),\partial g^{(\nu)}_{z^{(\nu)}}(w) \right \rangle_{0}^{D_\nu}\right\rvert}{{\left \lvert  \partial g^{(\nu)}_{z^{(\nu)}}(v) \right\rvert_{0}^{D_\nu}}{\left \lvert  \partial g^{(\nu)}_{z^{(\nu)}}(w) \right\rvert_{0}^{D_\nu}}}\right)^2\right \rvert \leq \frac{1}{2^{\nu}},
\\&\label{SPCVProcheBis}\displaystyle \sup_{v,w\in \mathbb{C}^n\setminus\{0\}} \left \lvert Bis_0^{\Omega_{\nu}}(\partial g^{(\nu)}_{z^{(\nu)}}(v),\partial g^{(\nu)}_{z^{(\nu)}}(w))-Bis_0^{D_{\nu}}(\partial g^{(\nu)}_{z^{(\nu)}}(v),\partial g^{(\nu)}_{z^{(\nu)}}(w))\right \rvert \leq \frac{1}{2^{\nu}}.
\end{eqnarray}
Because of the property $\overline{\Omega_{\nu}} \subset D_{\nu+1} \subset \Omega_{\nu+1}$ for every $\nu \in \mathbb{N}$, the sequence $\left(D_\nu\right)_{\nu \in \mathbb{N}}$ is an increasing sequence of strictly pseudoconvex domains with boundary of class $\mathcal{C}^\infty$. Since $\displaystyle \lim _{\nu \to \infty}s^\Omega\left(z^{(\nu)}\right)=1$ we have $\cup_{\nu \in \mathbb{N}}D_\nu =B(0,1)$, that is $\left(D_\nu\right)_{\nu \in \mathbb{N}}$ is an exhaustion of the unit ball by strictly pseudoconvex domains with boundary of class $\mathcal{C}^\infty$. Therefore according to Lemma \ref{Exh} we deduce the following up to extracting a subsequence from $\left(D_\nu\right)_{\nu \in \mathbb{N}}$:
\[\displaystyle \sup_{v,w\in \mathbb{C}^n\setminus\{0\}} \left \lvert \left(\frac{\left \lvert \left \langle v,w \right \rangle_{0}^{D_\nu}\right\rvert}{{\left \lvert  v \right\rvert_{0}^{D_\nu}}{\left \lvert  w \right\rvert_{0}^{D_\nu}}}\right)^2-\left(\frac{\left \lvert \left \langle v,w \right \rangle_{0}^{B(0,1)}\right\rvert}{{\left \lvert  v \right\rvert_{0}^{B(0,1)}}{\left \lvert  w \right\rvert_{0}^{B(0,1)}}}\right)^2\right \rvert \underset{\nu \to \infty}{\longrightarrow}0,\]
\[ \sup_{v,w\in \mathbb{C}^n\setminus\{0\}} \left \lvert Bis_{0}^{D_{\nu}}(v,w)-Bis_{0}^{B(0,1)}(v,w)\right \rvert \underset{\nu \to \infty}{\longrightarrow}0.\]
Moreover, since each $g^{(\nu)}$ is holomorphic and injective, the linear map $\partial g^{(\nu)}_{z^{(\nu)}}$ is invertible, hence:
\begin{eqnarray} 
&\label{CVBouleMet}
\displaystyle \sup_{v,w\in \mathbb{C}^n\setminus\{0\}} \left \lvert \left(\frac{\left \lvert \left \langle \partial g^{(\nu)}_{z^{(\nu)}}(v),\partial g^{(\nu)}_{z^{(\nu)}}(w) \right \rangle_{0}^{D_\nu}\right\rvert}{{\left \lvert  \partial g^{(\nu)}_{z^{(\nu)}}(v) \right\rvert_{0}^{D_\nu}}{\left \lvert  \partial g^{(\nu)}_{z^{(\nu)}}(w) \right\rvert_{0}^{D_\nu}}}\right)^2-\left(\frac{\left \lvert \left \langle \partial g^{(\nu)}_{z^{(\nu)}}(v),\partial g^{(\nu)}_{z^{(\nu)}}(w) \right \rangle_{0}^{B(0,1)}\right\rvert}{{\left \lvert  \partial g^{(\nu)}_{z^{(\nu)}}(v) \right\rvert_{0}^{B(0,1)}}{\left \lvert  \partial g^{(\nu)}_{z^{(\nu)}}(w) \right\rvert_{0}^{B(0,1)}}}\right)^2\right \rvert \underset{\nu \to \infty}{\longrightarrow}0,
\\&\label{CVBouleBis} \sup_{v,w\in \mathbb{C}^n\setminus\{0\}} \left \lvert Bis_0^{D_{\nu}}\left(\partial g^{(\nu)}_{z^{(\nu)}}(v),\partial g^{(\nu)}_{z^{(\nu)}}(w)\right)-Bis_0^{B(0,1)}\left(\partial g^{(\nu)}_{z^{(\nu)}}(v),\partial g^{(\nu)}_{z^{(\nu)}}(w)\right)\right \rvert\underset{\nu \to \infty}{\longrightarrow}0.
\end{eqnarray}
Using triangle inequality we obtain for every integer $\nu \in \mathbb{N}$ and every vectors $v,w\in \mathbb{C}^n\setminus \{0\}$:
\begin{align*}
&\left \lvert Bis_{z^{(\nu)}}^\Omega\left(v,w\right)+1+ \left(\frac{\left \lvert \left \langle v,w \right \rangle_{z^{(\nu)}}^{\Omega}\right\rvert}{{\left \lvert  v \right\rvert_{z^{(\nu)}}^{\Omega}}{\left \lvert  w \right\rvert_{z^{(\nu)}}^{\Omega}}}\right)^2\right\rvert
\\ & = \left \lvert Bis_0^{\Omega_{\nu}}\left(\partial g^{(\nu)}_{z^{(\nu)}}(v),\partial g^{(\nu)}_{z^{(\nu)}}(w)\right)+1+ \left(\frac{\left \lvert \left \langle \partial g^{(\nu)}_{z^{(\nu)}}(v),\partial g^{(\nu)}_{z^{(\nu)}}(w) \right \rangle_{0}^{\Omega_\nu}\right\rvert}{{\left \lvert  \partial g^{(\nu)}_{z^{(\nu)}}(v) \right\rvert_{0}^{\Omega_\nu}}{\left \lvert  \partial g^{(\nu)}_{z^{(\nu)}}(w) \right\rvert_{0}^{\Omega_\nu}}}\right)^2 \right \rvert 
\\ &\leq \left \lvert Bis_0^{\Omega_{\nu}}\left(\partial g^{(\nu)}_{z^{(\nu)}}(v),\partial g^{(\nu)}_{z^{(\nu)}}(w)\right)-Bis_0^{D_{\nu}}\left(\partial g^{(\nu)}_{z^{(\nu)}}(v),\partial g^{(\nu)}_{z^{(\nu)}}(w)\right) \right \rvert 
\\&+\left \lvert Bis_0^{D_{\nu}}\left(\partial g^{(\nu)}_{z^{(\nu)}}(v),\partial g^{(\nu)}_{z^{(\nu)}}(w)\right)-Bis_0^{B(0,1)}\left(\partial g^{(\nu)}_{z^{(\nu)}}(v),\partial g^{(\nu)}_{z^{(\nu)}}(w)\right) \right \rvert
\\& +\left \lvert Bis_0^{B(0,1)}\left(\partial g^{(\nu)}_{z^{(\nu)}}(v),\partial g^{(\nu)}_{z^{(\nu)}}(w)\right)+1+ \left(\frac{\left \lvert \left \langle \partial g^{(\nu)}_{z^{(\nu)}}(v),\partial g^{(\nu)}_{z^{(\nu)}}(w) \right \rangle_{0}^{D_\nu}\right\rvert}{{\left \lvert  \partial g^{(\nu)}_{z^{(\nu)}}(v) \right\rvert_{0}^{D_\nu}}{\left \lvert  \partial g^{(\nu)}_{z^{(\nu)}}(w) \right\rvert_{0}^{D_\nu}}}\right)^2 \right \rvert
\\& +\left \lvert \left(\frac{\left \lvert \left \langle  \partial g^{(\nu)}_{z^{(\nu)}}(v),\partial g^{(\nu)}_{z^{(\nu)}}(w) \right \rangle_{0}^{\Omega_\nu}\right\rvert}{{\left \lvert  \partial g^{(\nu)}_{z^{(\nu)}}(v) \right\rvert_{0}^{\Omega_\nu}}{\left \lvert  \partial g^{(\nu)}_{z^{(\nu)}}(w) \right\rvert_{0}^{\Omega_\nu}}}\right)^2-\left(\frac{\left \lvert \left \langle  \partial g^{(\nu)}_{z^{(\nu)}}(v),\partial g^{(\nu)}_{z^{(\nu)}}(w) \right \rangle_{0}^{D_\nu}\right\rvert}{{\left \lvert  \partial g^{(\nu)}_{z^{(\nu)}}(v) \right\rvert_{0}^{D_\nu}}{\left \lvert  \partial g^{(\nu)}_{z^{(\nu)}}(w) \right\rvert_{0}^{D_\nu}}}\right)^2\right \rvert
\\&\underset{\nu \to \infty}{\longrightarrow}0.
\end{align*}
From condition \mref{SPCVProcheBis}, respectively condition \mref{CVBouleBis}, condition \mref{SPCVProcheMet}, the first term of the right hand side, respectively the second, the fourth, tends to $0$ as $\nu$ tends to $+\infty$. Moreover the Kähler-Einstein metric we work with satisfies $Bis_0^{B(0,1)}(v,w)=-1-\left(\frac{\left \lvert \langle v, w \rangle_0^{B(0,1)} \right \rvert}{\left \lvert v \right \rvert _0^{B(0,1)}\left \lvert w \right \rvert _0^{B(0,1)}}\right)^2$ for every vectors $v,w\in \mathbb{C}^2\setminus\{0\}$. We combine this remark with condition \mref{CVBouleMet} to deduce that the third term of the right hand side tends to $0$ as $\nu$ tends to $+\infty$. Hence the result.
\end{proof}
\bibliographystyle{plain}
\bibliography{Biblio} 
\end{document}